%%%%%%%%%%%%%%%%%%%%%%%%%%%%%%%%%%%%%%%%%%%%%%%%%%%%%%%%%%%%
%% This is the first draft, February 2007.                 %%
%%%%%%%%%%%%%%%%%%%%%%%%%%%%%%%%%%%%%%%%%%%%%%%%%%%%%%%%%%%%
\documentclass[pdflatex]{amsart}
%options for class are \documentclass[oneside, draft]{amsart}
%\documentclass[11pt]{slides}

\usepackage[usenames]{color}
\usepackage{amssymb}
\usepackage{graphicx, epsfig, hyperref}
\usepackage{latexsym, amsfonts, amscd, amsmath}
\usepackage{mathrsfs}
\makeindex \setcounter{tocdepth}{2}
\input xy
\xyoption{all}

 \voffset = -20pt \hoffset = -60pt \textwidth =
460pt \textheight =610pt \headheight = 12pt \headsep = 20pt

%Color definitions
\definecolor{Indigo}{rgb}{0.2,0.1,0.7}
\definecolor{Violet}{rgb}{0.5,0.1,0.7}

% The theorems etc. numbering

\newtheorem{thm}{Theorem}[section]
\newtheorem{prop}[thm]{Proposition}
\newtheorem{lem}[thm]{Lemma}
\newtheorem{cor}[thm]{Corollary}

\newtheorem{dfn}[thm]{Definition}

\theoremstyle{remark}
\newtheorem{rmk}[thm]{Remark}

\numberwithin{equation}{section} \numberwithin{figure}{section}
\numberwithin{table}{section}

%Notation which should appear as operators
\newcommand{\Aut}{{\operatorname{Aut}}}

\newcommand{\Cl}{{\operatorname{Cl}}}
\newcommand{\Cent}{{\operatorname{Cent}}}

\newcommand{\diag}{{\operatorname{diag}}}

\newcommand{\disc}{{\operatorname{disc}}}
\newcommand{\Emb}{{\operatorname{Emb}}}
\newcommand{\End}{{\operatorname{End}}}
\newcommand{\Fr}{{\operatorname{Fr }}}
\newcommand{\Hom}{{\operatorname{Hom}}}

\newcommand{\Ima}{{\operatorname{Im}}}

\newcommand{\Ker}{{\operatorname{Ker}}}

\newcommand{\Norm}{{\operatorname{Norm }}}

\newcommand{\ord}{{\operatorname{ord }}}

\newcommand{\Span}{{\operatorname{Span }}}

\newcommand{\Tr}{{\operatorname{Tr }}}
\newcommand{\Trd}{{\operatorname{Trd}}}
\newcommand{\val}{{\operatorname{val}}}

%Notation which should appear as operators(GROUPS)
\newcommand{\SL}{{\operatorname{SL }}}
\newcommand{\M}{{\operatorname{M }}}

\newcommand{\Gal}{{\operatorname{Gal}}}

%shortcuts for groups

\newcommand{\Qpbar}{{\overline{\mathbb{Q}}_p}}

%shortcuts for fields, rings etc.

%shortcuts for letters German
\newcommand{\gera}{{\mathfrak{a}}}
\newcommand{\gerb}{{\mathfrak{b}}}
\newcommand{\gerc}{{\mathfrak{c}}}
\newcommand{\gerd}{{\mathfrak{d}}}

\newcommand{\gerf}{{\mathfrak{f}}}

\newcommand{\gerl}{{\mathfrak{l}}}
\newcommand{\germ}{{\mathfrak{m}}}

\newcommand{\gerp}{{\mathfrak{p}}}
\newcommand{\gerq}{{\frak{q}}}
\newcommand{\gerr}{{\frak{r}}}

\newcommand{\gerH}{{\frak{H}}}

\newcommand{\gerP}{{\frak{P}}}

%Shortcut for underlined letters

\newcommand{\uA}{{\underline{A}}}

\newcommand{\alphabar}{{\overline{\alpha}}}
\newcommand{\betabar}{{\overline{\beta}}}
\newcommand{\gammabar}{{\overline{\gamma}}}
\newcommand{\deltabar}{{\overline{\delta}}}
\newcommand{\pibar}{{\overline{\pi}}}
\newcommand{\mubar}{{\overline{\mu}}}

%Shortcut for letters mathcal
\newcommand{\calA}{{\mathcal{A}}}

\newcommand{\calD}{{\mathcal{D}}}

\newcommand{\calH}{{\mathcal{H}}}

\newcommand{\calO}{{\mathcal{O}}}

% Shortcut for letters mathbb

\def\CC{\mathbb{C}}
\def\DD{\mathbb{D}}

\def\FF{\mathbb{F}}

\def\HH{\mathbb{H}}

\def\QQ{\mathbb{Q}}
\def\RR{\mathbb{R}}

\def\WW{\mathbb{W}}

\def\ZZ{\mathbb{Z}}

%Shortcut for letters mathscr

\newcommand{\scrF}{{\mathscr{F}}}

%shortcuts for categories

%shortcuts for cumbersome commands
\newcommand{\id}{{\noindent}}

%Arrows

\newcommand{\arr}{{\; \rightarrow \;}}

\newcommand{\injects}{{\; \hookrightarrow \;}}

\newcommand{\ol}{{\mathcal{O}_L}}
\newcommand{\oleta}{{\mathcal{O}_{L_\eta}}}
\newcommand{\oketa}{{\mathcal{O}_{K_\eta}}}

\newcommand{\ok}{{\mathcal{O}_K}}

\newcommand{\fpbar}{{\overline{\FF}_{p}}}

\begin{document}
\marginparwidth 50pt
%\setcounter{page}{-1}
%\setcounter{section}{-1}
%\setcounter{subsection}{+1}
%\thispagestyle{empty}
%\renewcommand{\arraystretch}{1.3}
%\begin{tabular}{p{2.4 cm}p{4.5 cm}}
%(Eq1) & $b_0^{p+1} + a_0d_0^p = 0$ \\
%(Eq2) & $b_0a_0^p + a_0c_0^p = 0$ \\
%\end{tabular}
%To write the word RED in red you write:
%\textcolor{Red}{**RED**}\\
%\textcolor{Orange}{**ORANGE**}\\
%\textcolor{Yellow}{**YELLOW**}\\
%\textcolor{Green}{**GREEN**}\\
%\textcolor{Blue}{**Blue**}\\
%\textcolor{Indigo}{**INDIGO**}\\
%\textcolor{Violet}{**Violet**}

\title{A Gross-Zagier formula for quaternion algebras over totally real fields}

\author{Eyal Z. Goren \& Kristin E. Lauter}
\address{Department of Mathematics and Statistics,
McGill University, 805 Sherbrooke St. W., Montreal H3A 2K6, QC,
Canada.}\address{Microsoft Research, One Microsoft Way, Redmond,
WA 98052, USA.} \email{goren@math.mcgill.ca;
klauter@microsoft.com} \subjclass{Primary 11G15, 11G16 Secondary
11G18, 11R27}

\begin{abstract}
We prove a higher dimensional generalization of Gross and Zagier's theorem on the factorization of differences of singular moduli.
Their result is proved by giving a counting formula for the number of isomorphisms between elliptic curves with complex multiplication by two different imaginary quadratic fields $K$ and $K^\prime$, when the curves are reduced modulo a supersingular prime and its powers.  Equivalently, the Gross-Zagier formula counts optimal embeddings of the ring of integers of an imaginary quadratic field into particular maximal orders in $B_{p, \infty}$, the definite quaternion algebra over $\QQ$ ramified only at $p$ and infinity. Our work gives an analogous counting formula for the number of simultaneous embeddings of the rings of integers of primitive CM fields into superspecial orders in definite quaternion algebras over totally real fields of strict class number $1$.  Our results can also be viewed as a counting formula for the number of isomorphisms modulo $\frak{p} \vert p$ between abelian varieties with CM by different fields.
Our counting formula can also be used to determine which superspecial primes appear in the factorizations of differences of values of Siegel modular functions at CM points associated to two different CM fields, and to give a bound on those supersingular primes which can appear.  In the special case of Jacobians of genus $2$ curves, this provides information about the factorizations of numerators of Igusa invariants, and so is also relevant to the problem of constructing genus $2$ curves for use in cryptography.
\end{abstract}

\maketitle

\section{Introduction}

The celebrated theorem of Gross and Zagier~\cite{Gross Zagier} gives a factorization of norms of differences of singular moduli: values of the modular $j$-function evaluated at CM points associated to imaginary quadratic fields.  Let $K$ and $K^\prime$ be two imaginary quadratic fields with relatively prime fundamental discriminants $d$ and $d^\prime$.
For $\tau$ and $\tau^\prime$ running through equivalence classes of imaginary quadratic integers in the upper half plane modulo $\SL_2(\ZZ)$ with  $\disc(\tau) =d$, $\disc(\tau^\prime) = d^\prime$, and $w$ and $w^\prime$ equal to the number of roots of unity in $K$ and $K^\prime$ respectively, define 
%$$J(d,d^\prime)= \left(\prod_{\tiny{\begin{tabular}{ll}$[\tau]$, $[\tau^\prime]$, $\disc(\tau) =d$,  $\disc(\tau^\prime) = d^\prime$\\  \end{tabular}}} (j(\tau)-j(\tau^\prime))\right)^{\frac{4}{w w^\prime}}.$$
$$J(d,d^\prime)= (\prod_{[\tau], [\tau^\prime]} (j(\tau)-j(\tau^\prime)))^{\frac{4}{w w^\prime}}.$$
Then the theorem of Gross and Zagier states that if $\lambda$ is a prime of $\ok$ of characteristic $p$, then
$$ord_{\lambda}J(d,d^\prime) = \frac{1}{2} \sum_{x\in \ZZ} \sum_{n \ge 1} \delta(x) R \left( \frac{d d^\prime-x^2}{4p^n} \right),$$
where $R(m)$ is the number of ideals of $\ok$ of norm $m$, and $\delta(x)=1$ unless $x$ is divisible by $d$, in which case it is $2$.
Their results can also be viewed as a counting formula for the number of isomorphisms between the reductions modulo primes and their powers of elliptic curves with complex multiplication by two different imaginary quadratic fields $K$ and $K^\prime$.  This in turn is equivalent to counting optimal embeddings of the ring of integers of an imaginary quadratic field into particular maximal orders in $B_{p, \infty}$, the definite quaternion algebra over $\QQ$ ramified only at $p$ and infinity.  Gross and Zagier 
gave an algebraic proof of this result under the additional assumption that $d$ is prime, 
and the algebraic proof of the theorem was extended to arbitrary fundamental, relatively prime discriminants
in a series of papers by Dorman~\cite{DormanOrders},\cite{Dorman1},\cite{Dorman2}.

In this paper we prove a generalization to higher dimensions of Gross and Zagier's theorem, which can also be viewed in three ways as 1) a statement about primes in the factorization of differences of values of Siegel modular functions at CM points associated to two different CM fields;
2) a counting formula for isomorphisms modulo $p$ between abelian varieties with CM by different fields; and 3) a counting formula for simultaneous embeddings of the rings of integers of two primitive CM fields into superspecial orders in certain definite quaternion algebras over a totally real field.

First we explain our interest in these three contexts.  Assume throughout that $K$ and $K^\prime$ are primitive CM fields with a common totally real subfield $K^+={K^\prime}^+=L$ and $[L:\QQ]=g$ where $L$ has strict class number $1$.
%1. numerators of Igusa invariants
In the special case of $g=2$ we are inspired by some concrete calculations of values of certain Siegel modular functions at CM points associated to primitive quartic CM fields.
Let $C$ and $C^\prime$ be two genus $2$ curves whose Jacobians $J$ and $J^\prime$ have complex multiplication
(CM) by $K$ and $K^\prime$.
In analogy with the modular $j$-invariant for elliptic curves, for genus $2$ curves Igusa defined
$10$ modular invariants.  Equality of these $10$ invariants determines whether two curves are isomorphic geometrically, so
primes appearing in the factorization of all $10$ differences correspond to primes where the curves become isomorphic when reduced modulo that prime. Concrete calculations and the tables of van Wamelen suggest that such primes are ``small''.  An explicit characterization of such primes gives information about the numerators of Igusa invariants and thus has some value computationally as well.

%2. count isomorphisms between abelian varieties
% isomorphism implies supersingular reduction
Thus we are led to be interested in counting the number of isomorphisms modulo various primes and their powers between abelian varieties with CM by two different CM fields $K$ and $K^\prime$.
The existence of an isomorphism modulo $p$ between abelian varieties with CM by two different CM fields $K$ and $K^\prime$ with $K^+={K^\prime}^+$ implies supersingular reduction modulo $p$.
Fixing an abelian variety A with CM by $K$, each isomorphism modulo $p$ with an abelian variety $A'$ with CM by $K^\prime$ gives an embedding of $\calO_{K^\prime}$ into $\End_\ol(A)$.
In the case of superspecial reduction, we can give very explicit descriptions of the orders $\End_\ol(A)$, which allows us to derive a formula which counts such embeddings.

%3.
Goren and Nicole have introduced the notion of superspecial orders in definite quaternion algebras over totally real fields as a generalization of maximal orders in definite quaternion algebras over $\QQ$ (see the thesis of Nicole~\cite{Nicole} and the related paper~\cite{NicoleJNT}).  These orders were further studied in~\cite{CGL1, CGL2, GorenLauterDistance} where related Ramanujan graphs were constructed and certain cryptographic applications suggested.
Throughout this paper assume that $p$ is a prime number which is unramified in the totally real field $L$ of degree $g$ and strict class number $h^+(L)=1$.  Under those assumptions a {\it superspecial order} in $B_{p,L}:=B_{p, \infty} \otimes_\QQ L$ is an Eichler order of level $p$.  The connection with geometry is given in the thesis of Nicole, where it is shown that $\End_{\ol}(A)$ is a superspecial order for $A$ a principally polarized superspecial abelian variety with RM over $\overline{\FF}_p$.  Conversely, every superspecial order arises in this way from such an abelian variety $A$.

%Define superspecial order.

Next we give an overview of the results of the paper.
The core of the paper is the generalization of Dorman's work constructing and classifying superspecial
orders in $B_{p,L}$ with an optimal embedding of a CM number field $K$ with $K^+=L$.
First, Section~\ref{sec:BpL} is devoted to giving a description of the quaternion algebra $B_{p,L}$ with a fixed embedding of the CM field $K$ for {\it superspecial primes}, i.e. unramified primes $p$ such that an abelian variety with CM by $K$ has superspecial reduction 
modulo a prime $\frak{P} | p$ in a field of definition of the abelian variety.
Sections~\ref{orders} and~\ref{superspecialorders} establish a classification of superspecial orders with an optimal embedding of $K$, giving both an explicit construction of all such superspecial orders and a bijection (up to conjugation by elements of $K^\times$) with the class group of $K$ (Theorem~\ref{classification}).
These three sections together establish the generalization to $g > 1$ of Dorman's work on orders (\cite{DormanOrders}), and fix several gaps in his proofs.

Section~\ref{maincounting} gives a method for counting embeddings by counting elements of 
the superspecial orders with a prescribed trace and norm in a way that generalizes the Gross-Zagier formula.
Our method is very similar to Gross-Zagier's and Dorman's; their results are the special case $g=1$.
To make the link between the algebraic and the geometric sides of the story, we include the
determination of endomorphism rings of superspecial abelian varieties in Section~\ref{sec:endring}.
Section~\ref{sec:geometry} connects the counting formula for isomorphisms between CM abelian varieties with the counting formula for embeddings into superspecial orders.

The main result of the paper is an explicitly computable counting formula for the number of isomorphisms modulo $\frak{P} | p$ between abelian varieties with CM by two different CM fields $K$ and $K^\prime$
with $K^+={K^\prime}^+$ (Theorems~\ref{SS1S2} and~\ref{thm:geometry}).  
%\begin{thm}
%$$\# Isom() \cdot \# Aut()
%= w_K\sum_{x \, {\rm satisfies } \, {\bf C}} \delta(x) \cdot R_{\gera^2 \calA}(\frac{x^2-d d^\prime}{4p})$$
%\end{thm}
This formula can be viewed as an intersection number  
under the assumption that a reasonable lemma in intersection theory holds (see Section~\ref{sec: moduli}).  
Less precisely, we refer to this
value as a ``coincidence number''.  It also has an algebraic interpretation as the number of optimal triples of embeddings of $\ok$ and $\calO_{K^\prime}$ into superspecial orders (see Section~\ref{sec: triples}).

For primes of supersingular reduction for CM abelian varieties, a separate computation of the
endomorphism rings is given in Section~\ref{supersingularorders}.
In Section~\ref{sec:bound}, a volume argument such as was used in~\cite{GL1} is given to establish a bound
on primes $p$ of either supersingular or superspecial reduction where isomorphisms exist modulo $p$ between CM points associated to $K$ and $K^\prime$.
In Section~\ref{sec:example}, an example of two Galois CM fields is  given and all primes dividing the differences of the Igusa invariants are examined and compared with our counting formula.

\section{Preliminaries} \label{prelim}

\subsection{Quadratic Reciprocity for number fields}

Let $L$ be a number field, and $\gamma$ and $\delta$ prime elements of $L$ which are non-associates, such that
$(\gamma\delta,2)=1$.
Define
\begin{equation*}
	\left(\frac{\gamma}{\delta}\right) = \left\{ \begin{array}{cc}
	1 & \mbox{if $\gamma = \square \mod{\delta}$} \\
	-1 & \mbox{else}.
	\end{array} \right.
	\end{equation*}
Let $B = \left(\frac{\gamma, \, \delta}{L}\right)$ be the quaternion algebra over $L$ defined by the elements $\gamma$ and $\delta$.
For any place $\eta$ of $L$, including the infinite places, define
\begin{equation*}
	(\gamma, \delta)_{\eta} = \left\{ \begin{array}{cc}
	1 & \mbox{if $B \otimes_L L_{\eta}$ is split} \\
	-1 & \mbox{else}.
	\end{array} \right.
	\end{equation*}
Then we have the following analogue of Quadratic Reciprocity for the number field $L$.
\begin{prop} \label{QR}

(1) If $\eta$ is a finite prime such that $\eta \nmid 2$, then $(\gamma, \delta)_{\eta} = 1$ if and only if
$x^2 - \gamma y^2 -\delta z^2 = 0 $ has a non-trivial solution modulo ${\eta}$.
%\item

(2) If $\eta$ is complex, then $(\gamma, \delta)_{\eta} = 1$.

(3) If $\eta$ is real ($\eta : L \rightarrow \RR$), then $(\gamma, \delta)_{\eta} = 1$ if and only if
$\eta(\gamma) >0$ or $\eta(\delta) >0$. I.e. $(\gamma, \delta)_{\eta} = -1$ if and only if both $\eta(\gamma)$
and $\eta(\delta)$ are negative.

(4) $$\left(\frac{\gamma}{\delta}\right)\left(\frac{\delta}{\gamma}\right)= (-1)^{r(\gamma, \delta)} \cdot \prod_{\eta \mid 2} (\gamma, \delta)_{\eta},$$
where $r(\gamma, \delta)$ equals the number of real places $\eta$ such that both $\eta(\gamma)$ and $\eta(\delta)$ are negative.
In particular, if either $\gamma$ or $\delta$ are totally positive, then
$$ \left(\frac{\gamma}{\delta}\right)\left(\frac{\delta}{\gamma}\right)=  (\gamma, \delta)_{2} := \prod_{\eta \mid 2} (\gamma, \delta)_{\eta}.$$

(5) We have $$\left(\frac{-1}{\gamma}\right)(-1,\gamma)_2 = (-1)^{r(\gamma)},$$
where $r(\gamma)$ is the number of real places $\eta$ such that $\eta(\gamma)$ is negative.
%$r(\gamma) = \# \{\eta, \text{\rm real place}, \eta(\gamma)<0 \}$.

\end{prop}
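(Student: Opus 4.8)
The plan is to deduce all five parts from two standard inputs: the characterization of the splitting of a quaternion algebra by the isotropy of its associated ternary form, and the global Hilbert reciprocity law $\prod_\eta (\gamma,\delta)_\eta = 1$, the product ranging over all places of $L$ with almost all factors equal to $1$. Concretely, $B \otimes_L L_\eta$ is split if and only if $x^2 - \gamma y^2 - \delta z^2 = 0$ has a nontrivial solution over $L_\eta$, and I would take this equivalence as the common starting point for parts (1)--(3).

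For part (2) I would simply note that $L_\eta = \CC$ has trivial Brauer group, so every quaternion algebra splits and $(\gamma,\delta)_\eta = 1$. For part (3), over $L_\eta = \RR$ the algebra is the Hamilton quaternions exactly when both $\eta(\gamma)$ and $\eta(\delta)$ are negative and is $M_2(\RR)$ otherwise; equivalently the real conic $x^2 = \eta(\gamma) y^2 + \eta(\delta) z^2$ has a nonzero real point unless both coefficients are negative. As $\gamma,\delta$ are nonzero, $\eta(\gamma),\eta(\delta) \neq 0$, and the stated dichotomy follows. Part (1) is the local computation at a finite $\eta \nmid 2$: here I would use Hensel's lemma (applicable to the squaring map because the residue characteristic is odd) to descend $L_\eta$-solubility of the conic to solubility modulo $\eta$. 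When $\eta \nmid \gamma\delta$ the reduced conic is nondegenerate and, being a conic over a finite field, always has a point by Chevalley--Warning, so both sides equal $1$; the only genuine content is at $\eta \mid \gamma$ or $\eta \mid \delta$, where the reduced ternary form degenerates and one must phrase the residue criterion via a smooth (hence liftable) point rather than an arbitrary nonzero solution.

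The crux is part (4). Having established (1)--(3), I would feed the local symbols into Hilbert reciprocity. Since $\gamma,\delta$ are non-associate primes with $(\gamma\delta,2)=1$, the only places where $(\gamma,\delta)_\eta$ can equal $-1$ are the prime $\eta_\gamma \mid \gamma$, the prime $\eta_\delta \mid \delta$, the real places, and the places above $2$. At $\eta_\gamma$ the explicit formula for the local Hilbert symbol in odd residue characteristic gives $(\gamma,\delta)_{\eta_\gamma} = \left(\frac{\delta}{\gamma}\right)$, and symmetrically $(\gamma,\delta)_{\eta_\delta} = \left(\frac{\gamma}{\delta}\right)$; by (3) the product over the real places is $(-1)^{r(\gamma,\delta)}$; by (2) the complex places contribute $1$; and every remaining finite place has both entries units, hence contributes $1$. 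Reciprocity then reads $\left(\frac{\delta}{\gamma}\right)\left(\frac{\gamma}{\delta}\right)(-1)^{r(\gamma,\delta)}\prod_{\eta \mid 2}(\gamma,\delta)_\eta = 1$, and since every factor is $\pm 1$ this rearranges to the claimed identity; the ``in particular'' clause is just the remark that $r(\gamma,\delta)=0$ once $\gamma$ or $\delta$ is totally positive. Part (5) is the same argument applied to the pair $(-1,\gamma)$: now $-1$ is a unit everywhere, so the only contributions are $\left(\frac{-1}{\gamma}\right)$ at $\eta_\gamma$, the real places (which contribute $-1$ precisely when $\eta(\gamma)<0$, giving $(-1)^{r(\gamma)}$), and the places above $2$, whence $\left(\frac{-1}{\gamma}\right)(-1,\gamma)_2(-1)^{r(\gamma)}=1$, which rearranges to the assertion.

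I expect the main obstacle to be the honest treatment of part (1) at the primes dividing $\gamma$ and $\delta$ --- making the passage between ``nonzero solution modulo $\eta$'' and genuine $L_\eta$-isotropy precise when the reduced form is degenerate --- together with pinning down the normalization of the local Hilbert symbol so that its value at $\eta_\gamma$ comes out to $\left(\frac{\delta}{\gamma}\right)$ rather than its inverse or a unit-twist thereof. Everything else is bookkeeping once Hilbert reciprocity is granted.
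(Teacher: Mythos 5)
Your proposal is correct and follows essentially the same route as the paper: parts (1)--(3) via the isotropy criterion for the ternary form $x^2-\gamma y^2-\delta z^2$ over $L_\eta$ (with Hensel's lemma handling odd finite places), and parts (4)--(5) by feeding the local symbols into the product formula, where the paper obtains $(\gamma,\delta)_{\eta_\gamma}=\left(\frac{\delta}{\gamma}\right)$ from its part (1) exactly as you do via the tame symbol. Your flagged subtlety about degenerate reduction at $\eta\mid\gamma\delta$ is precisely the point the paper addresses by defining a non-trivial solution as one where a variable with non-zero coefficient is non-zero.
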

\begin{proof}

(1) By \cite[Chap. II, Cor 1.2]{Vigneras},
$(\gamma, \delta)_{\eta} = 1$ if and only if
$x^2 - \gamma y^2 -\delta z^2 = 0$ has a non-trivial solution in $L_{\eta}$,
where by ``non-trivial'' we mean a solution where at least one of the variables with non-zero coefficients is non-zero.  Suppose that $x^2 - \gamma y^2 -\delta z^2 = 0$ has a non-trivial solution in $L_{\eta}$.  By multiplying by a common denominator we can assume $x, y, z \in
\mathcal{O}_{L_{\eta}}$ and one of them is a unit.  Then reducing modulo $\eta$ we get a
non-trivial solution to $x^2 - \gamma y^2 -\delta z^2 \equiv 0 \mod{\eta}$.
Conversely, suppose $x^2 - \gamma y^2 -\delta z^2 \equiv 0 \mod{\eta}$ has a non-trivial solution.
By Hensel's lemma, we can lift the solution to $\mathcal{O}_{L_{\eta}}$.

Part (2) is clear and Part (3) follows from {\it loc. cit.} because
$x^2 - \eta(\gamma) y^2 -\eta(\delta) z^2 = 0$ has a non-trivial solution in $\RR^3$
if and only if either $\eta(\gamma) >0$ or $\eta(\delta) >0$.

To prove (4), first note that $(\gamma, \delta)_{\gamma} = 1 \iff$
$x^2 - \gamma y^2 -\delta z^2 = 0$ has a non-trivial solution modulo ${\gamma}$
$\iff$ $\delta =(\frac{x}{z})^2$ for some non-zero $x, z \in \ol/(\gamma)$
$\iff$ $(\frac{\delta}{\gamma})=1$.
By the product formula:
$$1=\prod_{\eta} (\gamma, \delta)_{\eta} = (-1)^{r(\gamma, \delta)}(\gamma, \delta)_{2}
\left(\frac{\delta}{\gamma}\right) \left(\frac{\gamma}{\delta}\right)
\prod_{\eta { \text{ finite, } } \eta \nmid 2\gamma\delta}(\gamma, \delta)_{\eta}.$$
But for $\eta \nmid 2\gamma\delta$,
$x^2 - \gamma y^2 -\delta z^2 = 0$ has a non-trivial solution modulo ${\eta}$, so
$(\gamma, \delta)_{\eta}=1$.

Similarly for (5), for any real place $\eta$, $\eta(\gamma) >0 \iff (-1,\gamma)_{\eta} = 1$,
so it follows from the product formula that
$$1=\prod_{\eta} (-1, \gamma)_{\eta} = (-1)^{r(\gamma)}\left(\frac{-1}{\gamma}\right)(-1, \gamma)_{2} .$$
\end{proof}

\subsection{The ring of integers in CM fields}

Let $K$ be a CM field with totally real subfield $K^+=L$. Assume that $L$ has strict class number one.  Let $\calD_{K/L}$ be the different of the extension and let $\eta$ denote a prime ideal of $\ol$.
\begin{lem} \label{Lemma: CMdisc} 

(1) $\ok = \ol[t]$, where $t^2+at+b=0$ for some $a,b \in \ol$, and $\calD_{K/L}^{-1} = (\frac{1}{\sqrt{d}})$, with $d=a^2-4b$ a totally negative element of $\ol$.

(2) Assume for $\eta \mid 2$ that if $\eta \mid a$ then $b$ is not a square modulo $\eta$.
Then $(d,2)=1$ and $d$ is square-free.

\end{lem}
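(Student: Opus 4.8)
The plan is to prove the two parts separately, exploiting that the strict class number one hypothesis makes $\ol$ a principal ideal domain.

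\textbf{Part (1).} Since $\ol$ is a PID, I would first show that $\ok$ is a free $\ol$-module of rank $2$ admitting a basis of the shape $\{1,t\}$. Indeed $\ok$ is finitely generated and torsion-free over $\ol$, and the quotient $\ok/\ol$ is torsion-free as well: if $\alpha x \in \ol$ with $0 \ne \alpha \in \ol$ and $x \in \ok$, then $x \in L \cap \ok = \ol$. Hence $\ol\cdot 1$ is a direct summand and $\ok = \ol \oplus \ol t$ for some $t$, so $\ok = \ol[t]$. Writing the monic minimal polynomial of $t$ over $L$ as $X^2 + aX + b$ with $a,b \in \ol$, set $d = a^2 - 4b$. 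Total negativity of $d$ is exactly the statement that $K$ is totally imaginary over the totally real $L$: for each real embedding $\sigma\colon L \to \RR$, the element $t$ is non-real under any extension of $\sigma$ to $K$, so $\sigma(d) = \sigma(a)^2 - 4\sigma(b) < 0$. For the different, I would invoke the standard formula for a monogenic extension, namely that $\calD_{K/L}$ is generated by $f'(t) = 2t + a$. Since $t = \tfrac{-a + \sqrt d}{2}$ we have $2t + a = \sqrt d$, whence $\calD_{K/L} = (\sqrt d)$ and $\calD_{K/L}^{-1} = \bigl(\tfrac{1}{\sqrt d}\bigr)$.

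\textbf{Part (2), primes $\eta \nmid 2$.} Here I claim $v_\eta(d) \le 1$ directly, without invoking tame ramification. Put $\theta = 2t + a = \sqrt d$, so $\theta^2 = d$; because $2$ is a unit at $\eta$, one has $(\ok)_\eta = (\ol)_\eta[t] = (\ol)_\eta[\theta]$ with $(\ol)_\eta$-basis $\{1,\theta\}$, and this ring is integrally closed. If $v_\eta(d) \ge 2$, pick a uniformizer $\pi$ of $(\ol)_\eta$ and observe that $(\theta/\pi)^2 = d/\pi^2 \in (\ol)_\eta$, so $\theta/\pi$ is integral over $(\ol)_\eta$; but $\theta/\pi = (1/\pi)\theta \notin (\ol)_\eta + (\ol)_\eta\theta$, contradicting integral closedness. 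Hence $v_\eta(d) \le 1$ for all $\eta \nmid 2$.

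\textbf{Part (2), primes $\eta \mid 2$.} Now I show $\eta \nmid d$, which gives $(d,2)=1$ and, combined with the previous paragraph, square-freeness. Since $v_\eta(4b) \ge v_\eta(4) \ge 2$, reduction modulo $\eta$ yields $d \equiv a^2 \pmod{\eta}$, so $\eta \mid d$ if and only if $\eta \mid a$; it therefore suffices to exclude $\eta \mid a$. This is the step I expect to be the main obstacle, because it hinges on reading the hypothesis correctly. The residue field $k = \ol/\eta$ is a finite field of characteristic $2$, on which Frobenius $x \mapsto x^2$ is surjective, so every element of $k$ is a square. Consequently, if $\eta \mid a$ then $b \bmod \eta$ is automatically a square in $k$, contradicting the assumption that $b$ is not a square modulo $\eta$. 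Thus the hypothesis forces $\eta \nmid a$, hence $\eta \nmid d$. Altogether $v_\eta(d) \le 1$ for every $\eta$ and $v_\eta(d) = 0$ for $\eta \mid 2$, i.e. $d$ is square-free and $(d,2)=1$.
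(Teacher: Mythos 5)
Your proof is correct, and its overall skeleton (monogenicity plus the different formula for part (1); separate treatment of odd primes and primes above $2$ for part (2)) matches the paper's, but two steps are handled differently. For part (1) the paper simply cites \cite[Lemma 3.1]{GL2}, whereas you prove it from scratch; your argument (torsion-freeness of $\ok/\ol$ over the PID $\ol$ to split off $\ol$ and get $\ok=\ol\oplus\ol t$, then $\calD_{K/L}=(f'(t))=(\sqrt{d})$) is the standard one and is fine. For the odd primes your argument is the paper's in different clothing: the paper shows that if $\val_\eta(d)\ge 2$ then $\oleta[\sqrt{d}]=\oleta+\oleta\sqrt{d}$ contains no element of valuation $1$ and so cannot be the ring of integers $\oketa$ of the ramified quadratic extension $K_\eta$, while you exhibit the explicit element $\sqrt{d}/\pi$ which is integral over $\oleta$ but lies outside $\oleta+\oleta\sqrt{d}$; both are the same contradiction with maximality, i.e.\ integral closedness. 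The genuine divergence is at primes $\eta\mid 2$. The paper invokes the Kummer--Dedekind criterion: $\eta$ ramifies iff $t^2+at+b\equiv (t-c)^2 \pmod{\eta}$, which in characteristic $2$ holds iff $\eta\mid a$ and $b\equiv\square\pmod{\eta}$; the hypothesis forbids this conjunction, so $K/L$ is unramified above $2$, and $\eta\nmid d$ follows since $(d)$ is the relative discriminant. You instead observe that in the finite residue field $\ol/\eta$ of characteristic $2$ the Frobenius $x\mapsto x^2$ is surjective, so ``$b$ is not a square mod $\eta$'' is unsatisfiable, and the stated hypothesis is therefore literally equivalent to $\eta\nmid a$; then $d\equiv a^2\pmod{\eta}$ finishes. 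This is more elementary (no Dedekind criterion needed) and it makes explicit something the paper leaves implicit: since $b$ is automatically a square modulo $\eta$, the hypothesis amounts exactly to requiring $a$ prime to $2$, and ramification above $2$ is equivalent to $\eta\mid a$. What the paper's route buys is the conceptual statement that $(d,2)=1$ because $K/L$ is unramified above $2$; what yours buys is brevity and a clarification of what the hypothesis actually says.
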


\begin{proof} Part (1) is proved in~\cite[Lemma 3.1]{GL2}.

Part (2).  Since $\ok = \ol[t]/(t^2+at+b)$, the prime decomposition of every prime $\eta$ is determined by the prime factorization of $t^2+at+b \mod{\eta}$.  If $\eta$ is ramified, that implies that
$t^2+at+b =(t-c)^2 \mod{\eta}$ for some $c \in \ol/(\eta)$.  But since $\eta \mid 2$, we have
$$(t-c)^2 = t^2 -c^2 = t^2+c^2 \mod{\eta},$$ so
$$t^2+at+b = (t-c)^2 \mod{\eta} \iff \eta \mid a  \, \, {\rm and } \, \, b =\square \mod{\eta}.$$
Thus our condition implies that $\ok$ is unramified over all primes $\eta \mid 2$.
%if $\eta \mid 2$ is such that if $\eta \mid a$, then $b$ is not a square modulo $\eta$.  Since the discriminant $d$ is supported at the ramified primes,
It follows that $(d,2)=1$.

Next we prove that $d$ is square-free.  Let $\eta$ be a prime of $\ol$ not dividing $2$.  For $\eta \mid d$,
we have $\ok \otimes_\ol {\mathcal{O}_{L_\eta}} = \oleta[\sqrt{d}]$ because $\ok =\ol[\frac{-a+\sqrt{d}}{2}]$.
Write $\oleta[\sqrt{d}] = \oleta[\sqrt{u \cdot \alpha_\eta^r}]$, where $u$ is a unit at $\eta$ and
$\alpha_\eta^r \mid d$.  If $r>1$, then
$$\oleta[\sqrt{u \cdot \alpha_\eta^r}] = \oleta + \oleta \cdot \sqrt{u \cdot \alpha_\eta^r}$$ has
no element of valuation $1$, which is not possible.  Indeed, if $\pi$ is a uniformizer
of $\oketa$, with valuation normalized so that
$\val_\eta(\oleta) = \ZZ_{\ge 0}$, then $\val_\pi(x) = 2\val_\eta(x) \in 2\ZZ_{\ge 0}$ for $x \in \oleta$, and
$$\val_\pi(\sqrt{u \cdot \alpha_\eta^r}) = \frac{1}{2}\val_\pi({u \cdot \alpha_\eta^r}) =
\val_\eta({u \cdot \alpha_\eta^r}) = r.$$
In other words, we have shown that discriminants of quadratic extensions of $p$-adic fields are square-free when $p \ne 2$.
\end{proof}

\begin{lem} We have $\ok = \ol[\frac{a'+\sqrt{d}}{2}]$ exactly for the $a' \in \ol$ such that
$a' \equiv a \mod{2\ol}$. Such $a'$ satisfy $(a')^2 \equiv d \mod 4\ol$.  Conversely, given $a' \in \ol$
such that $(a')^2 \equiv d \mod 4\ol$,  we have $\ok = \ol[\frac{a'+\sqrt{d}}{2}]$.
\end{lem}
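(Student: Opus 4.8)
The whole lemma will funnel through the single element $\omega := \frac{a'+\sqrt d}{2}$ together with the identity $\sqrt d = 2t + a$, which follows from $t = \frac{-a+\sqrt d}{2}$ in Lemma~\ref{Lemma: CMdisc}(1). The plan is to exploit the resulting relation $\omega = t + \frac{a'+a}{2}$. Since $t \in \ok$ and $\ol$ is integrally closed in $L$ (so $\ok \cap L = \ol$), this relation gives $\omega \in \ok \iff \frac{a'+a}{2} \in \ol$; and because $a'+a$ and $a'-a$ differ by $2a$, the latter is equivalent to $a' \equiv a \pmod{2\ol}$. I would set up the backbone
\[
a' \equiv a \!\!\pmod{2\ol} \;\Longleftrightarrow\; \tfrac{a'+a}{2}\in\ol \;\Longleftrightarrow\; \omega\in\ok \;\Longleftrightarrow\; (a')^2\equiv d \!\!\pmod{4\ol},
\]
whose last equivalence comes from the characteristic polynomial $X^2 - a'X + \frac{(a')^2-d}{4}$ of $\omega$ over $L$: as $a'\in\ol$ already, integrality of $\omega$ over $\ol$ (equivalently $\omega\in\ok$) is exactly the condition that its constant term $\frac{(a')^2-d}{4}$ lie in $\ol$.

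With this in hand the three assertions are short. For the first, if $a'\equiv a \pmod{2\ol}$ then $\frac{a'+a}{2}\in\ol$, so $\omega$ and $t$ differ by an element of $\ol$ and generate the same ring, $\ol[\omega]=\ol[t]=\ok$; conversely $\ol[\omega]=\ok$ forces $\omega\in\ok$, hence $a'\equiv a \pmod{2\ol}$ by the backbone. For the second, writing $a'=a+2c$ with $c\in\ol$ and using $d=a^2-4b$ gives $(a')^2 = a^2 + 4(ac+c^2)\equiv a^2 \equiv d \pmod{4\ol}$.

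The converse is the only assertion needing more than bookkeeping, and the step I expect to be the main obstacle if attacked directly: naively one wants to deduce $a'\equiv a \pmod{2\ol}$ from $(a')^2\equiv a^2 \pmod{4\ol}$ by hand, which is genuinely delicate at primes $\eta\mid 2$ that ramify in $L$ or have large residue field. The plan is to avoid this entirely by routing through integrality: the hypothesis $(a')^2\equiv d \pmod{4\ol}$ says precisely that the characteristic polynomial above is monic over $\ol$, so $\omega$ is integral over $\ol$ and hence lies in its integral closure $\ok$; the backbone then yields $a'\equiv a \pmod{2\ol}$, and the first assertion gives $\ol[\omega]=\ok$.

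As a cross-check, and an alternative to the final step, one may instead compare relative discriminants. The order $\ol[\omega]$ has $\disc(\ol[\omega]/\ol)=(a')^2 - 4\cdot\frac{(a')^2-d}{4}=d$, while $\ok=\ol[t]$ has $\disc(\ok/\ol)=a^2-4b=d$; since $\ol[\omega]\subseteq\ok$ and the two discriminant ideals coincide, the inclusion has unit index and is therefore an equality. I would present the integral-closure argument as the main line, since it needs nothing beyond Lemma~\ref{Lemma: CMdisc}(1) and integral closedness of $\ol$.
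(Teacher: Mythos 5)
Your proof is correct and takes essentially the same approach as the paper's: the forward direction by translating the generator by an element of $\ol$, the congruence $(a')^2 \equiv d \pmod{4\ol}$ by direct expansion, and the converse by noting that the hypothesis makes $\frac{a'+\sqrt{d}}{2}$ a root of a monic quadratic over $\ol$, hence an element of $\ok$. The only cosmetic difference is that you recover $a' \equiv a \pmod{2\ol}$ from membership in $\ok$ via $\ok \cap L = \ol$, whereas the paper compares coefficients of $\sqrt{d}$ after writing the element in terms of the $\ol$-basis $\{1, \frac{a+\sqrt{d}}{2}\}$ of $\ok$; both steps are equally routine.
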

\begin{proof}
We have $\ok = \ol[t] = \ol[\frac{a+\sqrt{d}}{2}] = \ol[\frac{a'+\sqrt{d}}{2}]$ if
$a' \equiv a \mod{2\ol}$.  We have $d=a^2-4b \equiv a^2 \mod{4\ol}$.
Then also $(a')^2 = (a +2y)^2 = a^2+4ay+4y^2 \equiv d \mod{4\ol}$.

If $\ol[\frac{a+\sqrt{d}}{2}]= \ol[\frac{a'+\sqrt{d}}{2}]$ then
$$\frac{a+\sqrt{d}}{2} = u + v(\frac{a'+\sqrt{d}}{2}),$$ which implies that
$$  a+\sqrt{d} = 2u +va'+v\sqrt{d},$$ and so
$$  v = 1 \quad {\rm and} \quad a=2u+a' \Rightarrow a \equiv a' \mod{2\ol}.$$

Finally, suppose $a' \in \ol$ satisfies $(a')^2 \equiv d \mod {4\ol}$.  Then $\frac{a'+\sqrt{d}}{2}$
is integral.  Therefore
$$\frac{a'+\sqrt{d}}{2} = u + v \cdot(\frac{a+\sqrt{d}}{2})
 \Rightarrow a'+\sqrt{d} = 2u +va+v\sqrt{d} \Rightarrow v = 1 \Rightarrow a \equiv a' \mod{2\ol}.$$
\end{proof}

\subsection{CM points on Hilbert modular varieties} \label{sec:CMpoints} Assume that $L$ is a totally real field, $[L:\QQ] = g$ and $L$ has strict class number $1$; we write $h_L^+ = 1$. This implies that $(\ol^\times)^+ = (\ol^\times)^2$. In this case, the Hilbert modular variety $\calH_L$ associated to $L$ is geometrically irreducible and affords the following description. It is the moduli space for triples $(A, \iota: \ol \arr \End(A), \eta)$, where $A$ is a complex abelian variety of dimension $g$, $\iota$ is a ring embedding and $\eta$ is a principal $\ol$-polarization, or, equivalently, $\eta$ is a principal polarization and the associated Rosati involution fixes $\ol$ element-wise. We have $\calH_L \cong \SL_2(\ol) \backslash \gerH^g$ (see~\cite[Chapter 2, \S 2]{Goren}). Our interest is in the parameterization of CM points on $\calH_L$.

\subsubsection{Abelian varieties with CM} Let $K$ be a CM field such that $K^+ = L$. We consider triples
\begin{equation}\label{equation: triple} (A, \iota: \ok \arr \End(A), \eta) ,
\end{equation}
such that $A$ is a $g$-dimensional complex abelian variety, $\iota$ is a ring homomorphism and $\eta$ is a principal
$\ok$-polarization, where by that we mean a principal polarization whose associated Rosati involution induces complex conjugation on $K$.

Such datum produces a point on $\calH_L$, namely, the point parameterizing $(A, \iota\vert_\ol, \eta)$. This will be examined later. First we want to classify triples $(A, \iota, \eta)$ as in (\ref{equation: triple}) up to isomorphism.

To a triple $(A, \iota, \eta)$ we may associate a CM type $\Phi$ that records the induced action of $K$ on $T_{A, 0}$, the tangent space to $A$ at the origin. The theory of complex multiplication then asserts the existence of a fractional ideal $\gera$ of $K$ such that
\[ (A, \iota) \cong (\CC^g/\Phi(\gera), \iota_{can}), \]
where $\Phi(\gera)$ is the lattice $\{\varphi_1(a), \dots, \varphi_g(a): a\in \gera\}$ and where $\Phi = \{ \varphi_1, \dots, \varphi_g\}$; $\iota_{can}$ is the canonical action of $\ok$ on that abelian variety, obtained by extending the natural action on $\Phi(\gera)$. Furthermore, the principal polarization $\eta$ is induced from a paring on $K$ of the form
\[ (x, y) \mapsto \Tr_{K/\QQ}(ax\bar y), \]
for some $a\in K$. The conditions on $a$ ensuring the associated polarization, say $\eta_a$, is principal are:
\begin{enumerate}
\item $(a) = (\calD_K\gera\bar\gera)^{-1}$.
\item $\bar a = -a$.
\item $\Ima(\varphi_i(a))>0$, for $i = 1, \dots, g$.
\end{enumerate}
It follows easily that for every $\lambda\in K^\times$ the principally polarized abelian variety associated to $(\Phi, \gera, a)$, in the manner above, is isomorphic to that associated to $(\Phi, \lambda \gera, (\lambda \bar \lambda)^{-1}a)$. Furthermore, any isomorphism of principally polarized abelian varieties $(A, \iota, \eta) \cong (A^\prime, \iota^\prime, \eta^\prime)$ as in (\ref{equation: triple}) arises that way.

Now, given a fractional ideal $\gera$ of $K$, the ideal $\gera\bar\gera$ is of the form $\gerb\ok$ for some fractional ideal $\gerb$ of $L$ and, since $h_L = 1$, we can write $(\gera\bar\gera)^{-1} = \lambda \ok$ for a suitable $\lambda\in L$. The fractional ideal $\calD_K^{-1}$ is of the form $d^{-1/2}\ok$, where $d$ is a totally negative element of $L$. Thus,
\[  (\calD_K\gera\bar\gera)^{-1} = (\lambda d^{-1/2}), \]
and $\overline{\lambda d^{-1/2}} = - \lambda d^{-1/2}$. We are free to change $\lambda$ by any unit $\epsilon \in \ol^\times$. Since $(\ol^\times)^+ = (\ol^\times)^2$, it follows easily that for any choice of signs $s_1, \dots, s_g$ in $\{ \pm 1\}$ there is a unit $\epsilon \in \ol^\times$ such that the sign of $\varphi_i(\epsilon)$ is $s_i$. Since $\Ima(\varphi_i(\epsilon \lambda \sqrt{d}^{-1})) = \varphi_i(\epsilon) \Ima(\varphi_i(\lambda \sqrt{d}^{-1}))$, by choosing $\epsilon$ properly we may arrange that $\Ima(\varphi_i(\epsilon \lambda \sqrt{d}^{-1})) >0$ for all $i = 1, \dots, g$. We have thus shown that for every fractional ideal $\gera$ of $K$, there is a suitable $a$ such that $(\Phi, \gera, a)$ gives a principally polarized abelian variety with CM by $K$.

Let $\gera_1, \dots, \gera_{h_K}$ be representatives for the class group of $K$, $\Cl(K)$. Our discussion so far shows that the isomorphism classes of principally polarized abelian varieties with CM by $\ok$ are in bijection with equivalence classes of the following set
\[ \{(\Phi, \gera_i, a) : \Phi \text{ is a CM type}, 1 \leq i \leq h_k, a \text{ satisfies conditions (i) - (iii) above relative to $\Phi, \gera$} \}.\] The equivalence relation is that
\[ (\Phi, \gera_i, a) \sim (\Phi, \gera_i, \epsilon \bar \epsilon a), \qquad \epsilon \in \ok^\times.\]
Given $(\Phi, \gera_i, a)$ and $(\Phi, \gera_i, b)$ there is a unit $\epsilon_1\in \ok^\times$ such that $b = \epsilon_1a$, because both $a$ and $b$ generate the ideal $(\calD_K\gera_i\bar\gera_i)^{-1}$. Since $\bar a = -a$ and $\bar b = -b$, it follows that $\epsilon_1\in \ol^\times$, and since $\Ima(\varphi(a)) > 0$ and $\Ima(\varphi(b)) > 0$ it follows that $\epsilon_1 \in \ol^{\times, +}$. Using that $ \ol^{\times, +} =  \ol^{\times, 2}$, we conclude that there is an $\epsilon \in \ol$ such that $\epsilon_1 = \epsilon^2 =\epsilon \bar \epsilon$. That is, $(\Phi, \gera_i, a) \sim (\Phi, \gera_i, b)$. We therefore conclude that,
in the strict class number 1 case, isomorphism classes of principally polarized abelian varieties with CM by $K$ and a fixed CM type are parameterized by the ideal classes of $K$.
%\begin{prop} \label{prop: cm points}There is a bijection between isomorphism classes of principally polarized abelian varieties with CM by $\ok$ and the set $\scrF \times \Cl(K)$, where $\scrF$ is the set of CM types for $K$.
%In particular, there are $2^g \times h_K$ such isomorphism classes.
%\end{prop}

\subsubsection{CM points on $\calH_L$}
Let $(A, \iota:\ok \arr \End(A))$ be a complex abelian variety with CM by $K$ (so $[K:\QQ] = 2\dim(A)$). Since $h_L^+ = 1$, it carries a unique principal polarization up to isomorphism. Consider $\End_\ol(A)$. We use \cite[Lemma 6, p. 464]{Chai}. In the notation of that Lemma, since $A$ has CM only cases III(a) and IV can arise. But, since we are working over the complex numbers, in fact only case IV can arise, and according to which $A \sim B^n$, where $B$ is of dimension $g/n$ and has CM by a CM field $K_0$ whose totally real subfield $L_0$ is contained in $L$ and satisfies $[L:L_0] = n$. One has $\End^0_L(A) = L \otimes_{L_0}K_0$, which is a CM field according to that Lemma. It follows, because $K$ is primitive, that
$\End^0_L(A) = K$. As a consequence, once a RM structure is specified on $A$, there are precisely two CM structures extending it; if $\iota: \ok \arr \End(A)$ is one of them, the other is $\bar \iota: = \iota \circ \tau$, where $\tau$ is complex conjugation on $K$. If $\iota$ has CM type $\Phi$ then $\bar \iota$ has CM type $\bar \Phi$.  Let $\scrF$ be the set of CM types for $K$.

\begin{prop} \label{prop: cm points}
Define an equivalence relation $\sim$ on $\scrF \times \Cl(K)$ by $(\Phi, [\gera]) \sim (\bar\Phi, [\bar \gera]) ( = (\bar\Phi, [\gera^{-1}])).$ Then the set $\scrF \times \Cl(K)/\sim$ has $2^{g-1} \times \#\Cl(K)$ elements and is in a natural bijection with the $K$-CM points on $\calH_L$, that is, with the points $(A, \iota: \ol \arr \End(A), \eta)$ for which we can extend $\iota$ to an embedding $\ok \arr \End(A)$ whose image is fixed (as a set) by the Rosati involution associated to $\eta$.
\end{prop}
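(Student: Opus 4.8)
The plan is to combine the classification of triples $(A, \iota, \eta)$ obtained in the preceding paragraphs with the observation that the forgetful map to $\calH_L$, which remembers only the RM structure, is exactly two-to-one, its fibres being the two CM extensions of a given RM structure.

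First I would record the bookkeeping already established: in the strict class number $1$ case the isomorphism classes of principally polarized abelian varieties $(A, \iota: \ok \arr \End(A), \eta)$ are in natural bijection with $\scrF \times \Cl(K)$, sending such a triple to the pair $(\Phi, [\gera])$, where $\Phi$ is its CM type and $\gera$ is the fractional ideal with $(A, \iota) \cong (\CC^g/\Phi(\gera), \iota_{can})$. Since a CM type is a choice of one embedding from each of the $g$ conjugate pairs, $\#\scrF = 2^g$ and hence $\#(\scrF \times \Cl(K)) = 2^g\cdot\#\Cl(K)$. Next I would analyze the forgetful map $(A, \iota, \eta) \mapsto (A, \iota\vert_\ol, \eta)$ onto the $K$-CM points. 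Surjectivity is essentially the definition of a $K$-CM point; what must be checked is that the CM extension can be taken with image stable under the Rosati involution, but this is automatic, since $\End^0(A) = K$ and the Rosati involution attached to $\eta$ acts as complex conjugation on $K$, so $\iota(\ok)$ is a Rosati-stable subring. For the fibres, any two triples mapping to the same $K$-CM point differ by an isomorphism carrying one RM structure to the other; pulling back the CM structure shows that over each point there are exactly the two extensions $\iota$ and $\bar\iota = \iota\circ\tau$ described above, and these are non-isomorphic as triples because their CM types $\Phi$ and $\bar\Phi$ differ. Thus the map is exactly two-to-one, and the $K$-CM points are identified with the quotient of $\scrF \times \Cl(K)$ by the involution $\iota \mapsto \bar\iota$.

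The key computation, which I expect to be the \emph{main obstacle}, is to identify this involution explicitly on the parameter set. Passing from $\iota$ to $\bar\iota$ replaces the CM type $\Phi$ by $\bar\Phi$, since $\bar\iota(\alpha) = \iota(\bar\alpha)$ acts on $T_{A,0}$ by $\Phi(\bar\alpha) = \bar\Phi(\alpha)$. To track the ideal I would rewrite $(\CC^g/\Phi(\gera), \iota_{can}\circ\tau)$ in standard form for the type $\bar\Phi$: using the CM-field identity $\bar\varphi = \varphi\circ\tau$ one checks $\bar\Phi(\bar\gera) = \Phi(\gera)$, so $(A,\bar\iota) \cong (\CC^g/\bar\Phi(\bar\gera), \iota'_{can})$ and the class $[\gera]$ is sent to $[\bar\gera]$. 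Because $\gera\bar\gera$ extends from $L$, where $h_L = 1$, it is principal, whence $[\bar\gera] = [\gera^{-1}]$; so the involution is precisely $(\Phi, [\gera]) \mapsto (\bar\Phi, [\gera^{-1}])$, i.e.\ the relation $\sim$. I would also note that the same polarization $\eta$ is legitimate for $\bar\iota$, as its Rosati involution is complex conjugation regardless of the chosen CM structure.

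Finally, the cardinality count follows once $\sim$ is shown to be fixed-point free: a fixed point would force $\Phi = \bar\Phi$, which is impossible since $\Phi$ and $\bar\Phi$ are complementary, hence disjoint, sets of embeddings. Therefore the involution pairs up all elements and $\#\left(\scrF \times \Cl(K)/\sim\right) = \tfrac{1}{2}\,2^g\,\#\Cl(K) = 2^{g-1}\,\#\Cl(K)$, as claimed.
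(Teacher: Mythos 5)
Your proof is correct and takes essentially the same route as the paper, which offers no separate argument for this proposition but presents it as the summary of the immediately preceding discussion: the parameterization of triples $(A,\iota,\eta)$ by $\scrF\times\Cl(K)$, and the fact that a given RM structure admits exactly the two CM extensions $\iota$ and $\bar\iota=\iota\circ\tau$ with CM types $\Phi$ and $\bar\Phi$. Your write-up usefully makes explicit two steps the paper leaves implicit, namely the lattice identity $\bar\Phi(\bar\gera)=\Phi(\gera)$ showing that the involution sends $[\gera]$ to $[\bar\gera]=[\gera^{-1}]$, and the fixed-point-freeness of $\sim$ (since $\Phi\neq\bar\Phi$ always) that gives the count $2^{g-1}\#\Cl(K)$.
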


\section{Quaternion algebras over totally real fields} \label{sec:BpL}

Let $L$ be a totally real number field of degree $g$ and strict class number 1.
%Equivalently, $L$ has class number $1$ and $\ol^{\times 2} =\ol^{\times +}$.
Let $p$ be a prime number unramified in $L$ and
let $$B_{p,L} := B_{p, \infty} \otimes_{\QQ} L,$$ where~$B_{p, \infty}$ is the rational quaternion algebra ramified at~$p$ and~$\infty$ alone.
Let $$S = \{\gerp \triangleleft \ol \mid \gerp|p \}$$ be the set of prime ideals of $L$ above $p$, and
let $$S_0 = \{\gerp \in S \mid f(\gerp/p) \equiv 1 \mod{2} \}$$
be those with odd residue degree.  The algebra $B_{p,L}$ is ramified precisely
at all infinite places and at the primes $\gerp \in S_0$.

The rest of this section and Sections~\ref{orders} and~\ref{superspecialorders} are devoted to giving a description of the
quaternion algebra $B_{p,L}$, and a classification of some particular orders, under the assumption that  all primes $\gerp \in S \setminus S_0$ split
in $K$ and all primes $\gerp \in S_0$ are inert in $K$.  
First we prove that this assumption is satisfied when $p$
is an unramified prime of superspecial reduction for an abelian variety with CM by $K$.  

\subsection{Splitting behavior in the case of superspecial reduction}

\begin{prop} \label{superspecialsplitting}
Let $p$ be a rational prime, unramified in $K$. Let $A$ be an abelian variety with CM by $\ok$, defined over a number field $M$, with good reduction at a prime ideal $\gerp_M$ of $M$ dividing the rational prime $p$. Assume that $A$ has supersingular reduction modulo $\gerp_M$. Then, every prime in $S_0$ is inert in $K$. Assume further that $A$ has superspecial reduction, then every prime in $S\setminus S_0$ is split in $K$.
\end{prop}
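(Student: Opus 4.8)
The plan is to study the $p$-divisible group of the reduction $\overline{A}$ directly, through its Dieudonn\'e module together with the Shimura--Taniyama description of Frobenius in terms of the CM type $\Phi$ of $A$. After base change to $\overline{\FF}_p$, let $D$ be the contravariant Dieudonn\'e module of $\overline{A}$: a free $W(\overline{\FF}_p)$-module of rank $2g$ with $\sigma$-linear $F$, $\sigma^{-1}$-linear $V$, and $FV=VF=p$. Since the CM action $\ok\hookrightarrow\End(\overline{A})$ commutes with $F$ and $V$, the idempotents of $\ok\otimes\ZZ_p=\prod_{\mathfrak{P}\mid p}\calO_{K,\mathfrak{P}}$ split $D=\bigoplus_{\mathfrak{P}\mid p}D_\mathfrak{P}$ into $F,V$-stable summands, where $D_\mathfrak{P}$ is the Dieudonn\'e module of $\overline{A}[\mathfrak{P}^\infty]$. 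As $p$ is unramified, $\calO_{K,\mathfrak{P}}\otimes_{\ZZ_p}W(\overline{\FF}_p)\cong\prod_{i\in\ZZ/f_\mathfrak{P}}W(\overline{\FF}_p)$ along the residue-field embeddings, which $\sigma$ permutes cyclically; this refines $D_\mathfrak{P}=\bigoplus_{i\in\ZZ/f_\mathfrak{P}}D^{(i)}$ into $W$-lines with $F\colon D^{(i)}\to D^{(i+1)}$ and $V\colon D^{(i)}\to D^{(i-1)}$, and on each line $F=p^{a_i}\sigma$ with $a_i\in\{0,1\}$. Here $f_\mathfrak{P}=f(\mathfrak{P}/\gerp)f(\gerp/p)$, and by Shimura--Taniyama the word $(a_i)$ over all pairs $(\mathfrak{P},i)$ is the indicator function of $\Phi$ among the $2g$ embeddings $K\hookrightarrow\Qpbar$, the pairs $(\mathfrak{P},i)$ being in canonical bijection with these embeddings. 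The relation $FV=p$ forces the $V$-exponent on $D^{(i)}$ to be $1-a_{i-1}$.

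For part (1), supersingularity means every slope of $D$ is $1/2$, so each $\overline{A}[\mathfrak{P}^\infty]$ is isoclinic of slope $1/2$; such an isocrystal has even height, hence $f_\mathfrak{P}$ is even for every $\mathfrak{P}$. If some $\gerp\in S_0$ were split in $K$, say $\gerp=\mathfrak{P}\overline{\mathfrak{P}}$, then $f_\mathfrak{P}=f(\gerp/p)$ would be odd, a contradiction. As $p$ is unramified, $\gerp$ is therefore inert.

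For part (2), I would invoke the characterization that $\overline{A}$ is superspecial if and only if its $a$-number equals $g$. The $a$-number is additive along $\overline{A}[p^\infty]=\prod_\mathfrak{P}\overline{A}[\mathfrak{P}^\infty]$, and the $\mathfrak{P}$-contribution is $\dim_{\overline{\FF}_p}(\ker F\cap\ker V)$ on $D_\mathfrak{P}/pD_\mathfrak{P}=\bigoplus_i\overline{D}^{(i)}$. From the formulas above, $F$ kills the line $i$ iff $a_i=1$ and $V$ kills it iff $a_{i-1}=0$, so $\ker F\cap\ker V$ is spanned by the lines with $a_i=1,\ a_{i-1}=0$; thus the contribution is the number of $0\to1$ transitions in the cyclic word $(a_i)_{i\in\ZZ/f_\mathfrak{P}}$. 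Supersingularity forces exactly $f_\mathfrak{P}/2$ ones, so this count is at most $f_\mathfrak{P}/2$, with equality iff $(a_i)$ is strictly alternating; summing gives $a$-number $\le\sum_\mathfrak{P}f_\mathfrak{P}/2=g$, and superspeciality forces every $(a_i)$ to alternate. Now take $\gerp\in S\setminus S_0$, so $f:=f(\gerp/p)$ is even, and suppose $\gerp$ were inert, with unique $\mathfrak{P}\mid\gerp$ and $f_\mathfrak{P}=2f$. Complex conjugation fixes $\mathfrak{P}$ and acts on the $2f$ indices as the shift $i\mapsto i+f$, so the CM-type property $\Phi\sqcup\overline{\Phi}=\{\text{all embeddings}\}$ reads $a_i+a_{i+f}=1$ for all $i$. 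But a strictly alternating word of period $2f$ is invariant under the even shift by $f$, giving $a_i=a_{i+f}$ and hence $a_i+a_{i+f}\in\{0,2\}$, contradicting $a_i+a_{i+f}=1$. So no alternating pattern is compatible with $\gerp$ inert; superspeciality therefore forces $\gerp$ to split, and being unramified it splits in $K$.

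The main obstacle is part (2). Supersingularity alone does not decide inert versus split at $\gerp\in S\setminus S_0$, and neither does the embedding condition $K\hookrightarrow(B_{p,L})_\gerp\cong M_2(L_\gerp)$, since both quadratic \'etale $L_\gerp$-algebras embed into the split local quaternion algebra. It is precisely the sharper superspecial condition --- maximality of the $a$-number, which forces the alternating Dieudonn\'e pattern that an even-degree inert prime cannot realize against the CM-type constraint --- that eliminates the inert case. Some care is needed in fixing conventions (contravariant versus covariant theory, and the consequent placement of $\Phi$ versus its complement) and in the descent to the field of definition, but these do not affect the combinatorial core, because the constraints $a_i+a_{i+f}=1$ and ``alternating'' are preserved under replacing $(a_i)$ by its complement.
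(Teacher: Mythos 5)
Your proposal is correct, and it is a hybrid: part (1) takes a genuinely different route from the paper, while part (2) is the paper's argument in different packaging. For part (1) the paper never touches the Dieudonn\'e module: it invokes Chai's lemma to get $\End_L^0(\overline{A}) \cong B_{p,L}$, hence an embedding $K \injects B_{p,L}$, and then uses the known ramification locus of $B_{p,L}$ (exactly $S_0$ and the infinite places) --- a quadratic \'etale algebra splitting a ramified local quaternion algebra must be the field, so every $\gerp \in S_0$ is inert. Your slope argument (each $\overline{A}[\mathfrak{P}^\infty]$ is isoclinic of slope $1/2$, hence has even height $f(\mathfrak{P}/p)$) is more self-contained, avoids the quaternionic input entirely, and proves slightly more (evenness of \emph{every} $f(\mathfrak{P}/p)$), at the cost of assuming the standard rank-one structure of the CM Dieudonn\'e module, which the paper only brings in for part (2). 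For part (2), your argument and the paper's coincide in substance: the paper likewise decomposes $D(\gerp)$ into lines indexed by the $2f$ embeddings $\psi_i^j$, cyclically permuted by Frobenius, notes (via the Rapoport condition or the CM type) that the kernel of Frobenius is one-dimensional in each $\overline{H}(\varphi_i)$, uses superspeciality to propagate that kernel alternately ($e_1^1, e_2^2, e_3^1, \dots$) through the cycle, and gets the contradiction from the wrap-around $\sigma \circ \psi_f^1 = \psi_1^2$ when $f$ is even. That is precisely your observation that a strictly alternating word of length $2f$ is invariant under the even shift by $f$, contradicting $a_i + a_{i+f} = 1$. The difference is that you route superspeciality through the characterization ``$a$-number equals $g$'' and count $0 \to 1$ transitions, which makes the mechanism quantitative (the $a$-number is at most $g$ with equality iff every local word alternates), whereas the paper uses superspeciality qualitatively (image of Frobenius equals kernel of Frobenius mod $p$) and chases basis vectors; your closing remark that complementing the word preserves both constraints correctly disposes of the covariant/contravariant convention issue, so there is no gap on that point either.
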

\begin{proof} Since $A$ has supersingular reduction, say $\overline{A}$, $\End_L^0(\overline{A}) \cong B_{p, L} = B_{p, \infty} \otimes_\QQ L$ (\cite[Lemma 6]{Chai}), and so
\[ K \injects B_{p, L}.\]
Therefore, at every prime $\gerP$ of $K$ above a prime $\gerp$ of $L$, the field $K_{\gerP}$ splits the quaternion algebra $B_{p, L} \otimes_L L_\gerp$. The quaternion algebra $B_{p, L}$ is ramified precisely at the primes in $S_0$ and at infinity. Thus, if $\gerp \in S_0$, we find that each $K_\gerP$ is a quadratic field extension of $L_\gerp$, that is, since $p$ is unramified in $K$, all the primes in $S_0$ are inert in $K$.

Assume now that there is a prime $\gerp\in S \setminus S_0$ that is inert in $K$ and let $\gerP$ be the prime of $K$ above $\gerp$. Let us denote the embedding of $\ol$ into $W(\fpbar)$ associated to $\gerp$ by $\{ \varphi_1, \dots, \varphi_f\}$, $f = f(\gerp/p)$, where we may order the embeddings so that $\sigma\circ \varphi_i = \varphi_{i+1}$ and $\sigma$ denotes the Frobenius automorphism. Each embedding $\varphi_i$ is the restriction of two embeddings of $\ok$ into $W(\fpbar)$ that we denote $\psi_i^1, \psi_i^2$, where one is the composition of the other with complex conjugation. Since $\gerP$ is inert over $\gerp$, $\sigma$ still acts transitively on the set $\{ \psi_i^j: i = 1, \dots, f, j = 1, 2\}$.

The Dieudonn\'e module of $\overline{A}$ decomposes as $ D = \oplus_{\gerp\vert p} D(\gerp)$ relative to the $\ol$ structure. Let $H = D(\gerp)$. Then $H$ decomposes further as
\[ H = \oplus_{i=1}^f H(\varphi_i) =  \oplus_{i=1}^f \left( H(\psi_i^1)\oplus H(\psi_i^2)\right), \]
where $H(\varphi_i)$ is a free $W(\fpbar)$-module of rank $2$ on which $\ol$ acts via $\varphi_i$ and it is decomposes into a direct sum of two free $W(\fpbar)$-module of rank $1$, $H(\psi_i^1), H(\psi_i^2)$, on which $\ok$ acts by $\psi_i^1, \psi_i^2$, respectively. Now, the transitivity of the action of $\sigma$ on the $\psi_i^j$ means that we can order them so that

\

\renewcommand{\arraystretch}{1.3}
\begin{tabular}{l}
$\sigma \circ \psi_i^1 = \psi_{i+1}^1, \qquad i = 1, 2, \dots, f-1,$
\\ $\sigma \circ \psi_f^1 = \psi_1^2 $,
\\$\sigma \circ \psi_i^2 = \psi_{i+1}^2, \qquad i = 1, 2, \dots, f-1,$
\\$\sigma \circ \psi_f^2 = \psi_1^1$.
\end{tabular}

\

Let us choose a basis $\{e_i^j: i = 1, 2, \dots, f, j = 1, 2\}$ for $H$ such that $e_i^j$ spans $H(\psi_i^j)$. Note that the kernel of Frobenius on $\overline{H}:= H \pmod{p}$ is an $\ok$-module and is one dimensional in every $H(\varphi_i)$, because $\overline{A}$ satisfies the Rapoport condition, or, alternately, for each $i$, precisely one of $\{\psi_i^1, \psi_i^2\}$ belongs to the CM type. Suppose, without loss of generality, that $e_1^1$ spans the kernel of Frobenius in $\overline{H}(\varphi_1)$, then we must have that $\Fr(e_1^2)$, which is equal up to a unit to $e_2^2$, spans the kernel of Frobenius in $\overline{H}(\varphi_2)$ (this is where ``superspecial" is being used), and by the same rationale we find that the kernel of Frobenius in $\overline{H}(\varphi_i)$ is spanned by $e_i^1$, for $i$ odd, and by $e_i^2$, for $i$ even. In particular, the kernel of Frobenius in $\overline{H}(\varphi_f)$ is spanned by $e_f^2$, because $f$ is even. Now, by the same rationale, $\Fr(e_f^1)$ spans the kernel of Frobenius in $\overline{H}(\varphi_1)$, and it lies in $\overline{H}(\psi_1^2)$ because $\sigma \circ \psi_f^1 = \psi_1^2 $. This is a contradiction.
\end{proof}

% \vspace{5mm}

\subsection{A description of $B_{p,L}$.}
Next we give a description of the quaternion algebra $B_{p,L}$ in terms of a CM field $K$, for a certain set of primes $p$, which according to Proposition~\ref{superspecialsplitting} includes the superspecial primes of $K$.  This description generalizes the approach of Gross and Zagier.

\vspace{5mm}

\noindent
{\bf Notation:}  If $\gerq$ is a prime of $L$, let $\alpha_\gerq$ denote a totally positive generator of $\gerq$.  It is unique up to an element of $\ol^{\times +}=\ol^{\times, 2}$.  Write $p = \prod_{\gerp \in S} \alpha_\gerp$. 

\begin{prop} \label{BpL} Let $K$ be a CM field, $K^+=L$. Assume $p$ is odd, unramified in $L$, and that all primes $\gerp \in S \setminus S_0$ split
in $K$ and all primes $\gerp \in S_0$ are inert in $K$.
These conditions imply that $K$ embeds in $B_{p,L}$.
Assume that the discriminant $\gerd_{K/L} = (d)$ satisfies $(d,2p)=1$.
Then there is a totally negative prime element
$\alpha_0 \in \ol$ such that $(\alpha_0,2pd)=1$ and $$B_{p,L} \cong \left(\frac{d, \, \alpha_0 p}{L}\right).$$  The ideal $(\alpha_0)$ is split in $K$.
\end{prop}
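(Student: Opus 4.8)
The plan is to prove the isomorphism by matching ramification sets, using that a quaternion algebra over a number field is determined up to isomorphism by its finite, even-cardinality set of ramified places (Hasse--Minkowski; cf.~\cite{Vigneras}). As recorded at the start of this section, $B_{p,L}$ ramifies exactly at the $g$ real places of $L$ and at the primes $\gerp\in S_0$, so $g+\#S_0$ is even. Any algebra $\left(\frac{d,\,\beta}{L}\right)$ contains $L(\sqrt d)=K$, so it is enough to produce a totally negative prime element $\alpha_0$, coprime to $2pd$, for which $\left(\frac{d,\,\alpha_0 p}{L}\right)$ ramifies at exactly the real places and the primes of $S_0$; the embedding $K\injects B_{p,L}$ then follows (it also follows a priori from the embedding criterion, since every ramified place of $B_{p,L}$ is non-split in $K$ --- the real places because $K$ is totally imaginary, the primes of $S_0$ because they are inert by hypothesis).

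First I would compute the Hilbert symbols $(d,\alpha_0 p)_\eta$ place by place with Proposition~\ref{QR}, finding that the hypotheses already force the correct behaviour everywhere except at the primes dividing $2d$ and at $(\alpha_0)$. Indeed: at a real place $d$ is totally negative (Lemma~\ref{Lemma: CMdisc}), $p>0$ and $\alpha_0$ is totally negative, so Proposition~\ref{QR}(3) gives symbol $-1$ (ramified); at $\gerp\in S\setminus S_0$ the prime splits in $K$, so $d$ is a square in $L_\gerp$ and the symbol is $+1$ (split); at $\gerp\in S_0$ the prime is inert, so $d$ is a non-square unit while $\val_\gerp(\alpha_0 p)=1$ is odd, whence the symbol is $-1$ (ramified); and at any other finite prime both entries are units and the symbol is $+1$.

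Next I would impose two families of conditions to kill the symbols at the remaining finite primes. For each $\eta\mid d$ (here $d$ is squarefree and prime to $2p$ by Lemma~\ref{Lemma: CMdisc}, so $\eta\nmid 2$) I require $\left(\frac{\alpha_0 p}{\eta}\right)=1$; since $d\equiv 0\pmod\eta$, Proposition~\ref{QR}(1) then gives $(d,\alpha_0 p)_\eta=1$. For each $\eta\mid 2$ I require $(d,\alpha_0 p)_\eta=1$, which is the kernel of the quadratic character $x\mapsto (d,x)_\eta$ on $L_\eta^\times/(L_\eta^\times)^2$ --- a nonempty condition determined by $\alpha_0$ modulo a power of $\eta$. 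All of these, together with the sign condition that $\alpha_0$ be totally negative, depend only on the class of $(\alpha_0)$ in the narrow ray class group of a modulus $\germ$ supported at $2d$ and the infinite places; they cut out a coset, and Dirichlet's theorem (Chebotarev) produces infinitely many prime ideals in it. Because $h_L^+=1$ and $(\ol^\times)^+=(\ol^\times)^2$, such a prime ideal has a totally negative generator $\alpha_0$, which I take coprime to $2pd$.

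Finally the one remaining place $(\alpha_0)$ is handled for free by the product formula $\prod_\eta(d,\alpha_0 p)_\eta=1$ (Proposition~\ref{QR}(4)): with all the above symbols determined, the product of the known factors is $(-1)^{g+\#S_0}=1$, forcing $(d,\alpha_0 p)_{(\alpha_0)}=1$. Since $\val_{(\alpha_0)}(\alpha_0 p)=1$ is odd, Proposition~\ref{QR}(1) shows this last symbol equals $1$ exactly when $d$ is a square modulo $\alpha_0$, i.e. exactly when $(\alpha_0)$ splits in $K$ --- which simultaneously yields the final assertion of the proposition. At this point $\left(\frac{d,\,\alpha_0 p}{L}\right)$ and $B_{p,L}$ have the same ramification set and are therefore isomorphic. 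I expect the main obstacle to be the bookkeeping at the dyadic primes and in the narrow ray class group: one must verify that the quadratic-residue and dyadic conditions, together with the sign conditions, are genuinely simultaneously satisfiable. This is precisely where the reciprocity law of Proposition~\ref{QR}(4)--(5) is needed --- it guarantees the consistency of the prescribed local conditions (we never prescribe $\left(\frac{d}{\alpha_0}\right)$ directly, leaving a free character to absorb the reciprocity relation) and legitimizes the product-formula shortcut at $(\alpha_0)$.
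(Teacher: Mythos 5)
Your proof is correct, and its skeleton is the paper's: produce a totally negative prime element $\alpha_0$ by a primes-in-arithmetic-progressions argument (the paper's Lemma~\ref{progressions}) with congruence conditions supported at the primes dividing $2d$, then compare the local invariants of $\left(\frac{d,\,\alpha_0 p}{L}\right)$ and $B_{p,L}$ place by place, with identical computations at the real places, at $S\setminus S_0$, at $S_0$, and at the primes away from $2pd\alpha_0$. The genuine difference is at the crux, namely the place $(\alpha_0)$. The paper proves $\left(\frac{d}{\alpha_0}\right)=1$ by the long explicit chain of quadratic-reciprocity manipulations via Proposition~\ref{QR}(4)--(5): it factors $d$ and $p$ into prime elements, tracks all the auxiliary dyadic symbols, and only in the last line invokes the parity of $g+\#S_0$. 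You instead apply Hilbert's product formula $\prod_\eta (d,\alpha_0 p)_\eta=1$ once: since every other symbol has been pinned down ($-1$ at the $g$ real places and at the primes of $S_0$, $+1$ everywhere else by construction), the symbol at $(\alpha_0)$ is forced to be $(-1)^{g+\#S_0}=1$, which simultaneously gives $\left(\frac{d}{\alpha_0}\right)=1$ and hence the splitting of $(\alpha_0)$ in $K$. This is legitimate --- the product formula is precisely what underlies the paper's Proposition~\ref{QR}(4) --- and it buys a much shorter, less error-prone argument: it makes visible that the displayed reciprocity computation is logically redundant, because one only needs the imposed local conditions to be consistent and reciprocity then forces the single unconstrained place to come out right. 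What the paper's longhand version buys in exchange is an argument phrased entirely in residue symbols, with each hypothesis (the congruences on $\alpha_0$, the splitting behaviour at $S$ and $S_0$) entering at a visible step; your version consumes the same hypotheses, but through the local symbols at the primes of $S$ directly. Two small points to tighten: coprimality of $\alpha_0$ to $p$ is not enforced by your modulus (supported at $2d$ and infinity), so you should note that Chebotarev gives infinitely many primes in the relevant class and discard the finitely many above $p$ (the paper instead imposes $\alpha_0\equiv 1 \bmod p$, which it wants later anyway); and your conditions at $\eta\mid d$ and $\eta\mid 2$ cut out a union of unit classes rather than literally one coset, but all the arithmetic-progressions lemma needs is nonemptiness, which holds since $\alpha_0\equiv p$ modulo the whole modulus satisfies every condition at once.
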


\begin{proof} We first need a lemma.
\begin{lem} (Primes in arithmetic progressions) \label{progressions}
Let $L$ be a number field and let $\nu_1, \dots, \nu_t$ be some of
$L$'s embeddings into $\RR$.  Let $\gerr \lhd \ol$ be an integral ideal and $r \in \ol$
an element such that $(r, \gerr) = 1$.  Then there is a prime element $\alpha \in \ol$ such that
$\alpha \equiv r \mod{\gerr}$ and $\nu_i(\alpha) > 0$, $\forall i=1,\dots,t$.
\end{lem}
\begin{proof}  We may assume $\nu_i(r) > 0$,  $\forall i=1,\dots,t$.  Indeed, one may replace
$r$ by $r+n$ for any element $n \in \gerr$.  Since $\gerr \otimes \QQ = L$,
for any $c \in \RR$, $\gerr$ contains elements $n$ such that $\nu(n) > c$ for every real place $\nu$
of $L$.  Taking $C=\max \{|\nu_i(r)|: \nu_i(r) < 0 \}$ and a suitable element $n \in \gerr$
we get $\nu_i(r+n) > 0$, $\forall i=1,\dots,t$.

Consider the modulus $\gerr \nu_1 \nu_2 \cdots \nu_t = \germ$, and the ray class group modulo $\germ$, $I(\germ)/P(\germ)$.
Here $I(\germ)$ is the multiplicative group of fractional ideals prime to $\germ$,
$P(\germ)$ is the subgroup of principal ideals having a generator $\beta$ such that
$\beta \equiv 1 \mod \germ$ and $\nu_i(\beta) >0$, $\forall i=1,\dots,t$.
Let $L(\germ)$ be the corresponding class field, $\Gal(L(\germ)/L) \cong I(\germ)/P(\germ)$.
The ideal $(r)$ is an element of $I(\germ)/P(\germ)$. Let $$\sigma = ((r), L(\germ)/L ) \in \Gal(L(\germ)/L)$$ be the Artin symbol.  By Chebotarev, there is a prime ideal $\gerp$ such that $(\gerp,\germ) = 1$ and $$\sigma = \sigma_\gerp = (\gerp, L(\germ)/L ).$$
% The ideal $\gerp$ is principal.  
Also, $\gerp$ is equivalent to $(r)$ modulo $P(\germ)$, hence also principal.
Indeed, $$\sigma_\gerp |_{H_L} = \sigma |_{H_L} = ((r), L(\germ)/L )|_{H_L} = 1.$$  Since
$\Gal(H_L/L) \cong I/P$, we must have that $\gerp$ is principal.  Let $(\alpha_1) = \gerp$.
By construction, $(\alpha_1) = (r)$  in $I(\germ)/P(\germ)$.  That means that the ideal
$(\alpha_1 r^{-1})$ has a generator $u\alpha_1 r^{-1}$, $u \in \ol^\times$, such that
$$u\alpha_1 r^{-1} \equiv 1 \mod \germ.$$  Let $\alpha = u\alpha_1 $.  Then $\alpha \equiv r \mod \germ$, meaning $\alpha \equiv r \mod \gerr$ and for every $i=1, \dots,t$, $\nu_i(\alpha)$
has the same sign as $\nu_i(r)$, i.e. is positive.
\end{proof}

According to Lemma~\ref{progressions}, we can choose $\alpha_0 \in \ol$ such that
\begin{enumerate}
\item $\alpha_0$ is a totally negative prime element of $\ol$.
\item $\alpha_0 \equiv p \mod{\eta^N}$, for each $\eta \mid 2$, some $N \gg 0$ (for choice of $N$, see below).
\item $\alpha_0 \equiv p \mod{\gerq}$, for each $\gerq \mid d$.
\item $\alpha_0 \equiv 1 \mod{p}$.
\end{enumerate}
Since $x^2 - d y^2 -\alpha_0 p z^2 \equiv 0 \mod{\eta^N}$ has a non-trivial solution if $N$ is large enough, then by Hensel's lemma, there is a $p$-adic solution .
We have
$$ (\star\star) \quad (d, \alpha_0 p)_\eta = 1 \, \,{\rm for \, \, all} \, \, \eta \mid 2, \quad \left(\frac{\alpha_0}{\gerq}\right) = \left(\frac{p}{\gerq}\right) \, \, {\rm for \, \, all} \, \,\gerq \mid d$$
and $(\alpha_0,2pd)=1$.

To show $B_{p,L} \cong \left(\frac{d, \, \alpha_0 p}{L}\right),$ we need to check:
\begin{enumerate}
\item For all $\eta \mid 2$, $(d,\alpha_0 p)_\eta = 1$: see $(\star\star)$.
\item For all $\eta$ finite such that $\eta \nmid d\alpha_0 p$, $(d,\alpha_0 p)_\eta = 1$: \\
$x^2 - d y^2 -\alpha_0 p z^2 \equiv 0 \mod{\eta}$ has a non-trivial solution.
\item For all $\eta$ finite such that $\eta \mid d$, $(d,\alpha_0 p)_\eta = 1$: \\
$x^2 -\alpha_0 p z^2 \equiv 0 \mod{\eta}$ has a non-trivial solution $ \iff (\frac{\alpha_0 p}{\eta})=1$, which is true by $(2)$.
\item For all $\eta \in S \setminus S_0$, $(d,\alpha_0 p)_\eta = 1$:\\
$x^2 - d y^2  \equiv 0 \mod{\eta}$ has a non-trivial solution $ \iff d = \square \mod{\eta} \iff \eta$ splits in $K$.
\item $\eta = \alpha_0 \Rightarrow (d,\alpha_0 p)_\eta = 1$:\\
$x^2 - d y^2 \equiv 0 \mod{\alpha_0}$ has a non-trivial solution $ \iff (\frac{d}{\alpha_0})=1$.
We will examine this below.
\item $\eta \in S_0 \Rightarrow (d,\alpha_0 p)_\eta = -1$:\\
$x^2 - d y^2  \equiv 0 \mod{\eta}$ has only the trivial solution $ \iff d \ne \square \mod{\eta} \iff \eta$ is inert in $K$.
\item $\eta$ real $ \Rightarrow (d,\alpha_0 p)_\eta = -1$:\\
$x^2 - d y^2 -\alpha_0 p z^2 = 0$ in $\RR$ has only the trivial solution since $-d$ and $-\alpha_0p$ are both positive.
\end{enumerate}
So it remains to prove only that $(\frac{d}{\alpha_0})=1$.

Write $d = (-1) \cdot \prod_{\gerq \mid d} \alpha_\gerq$, and $p = \prod_{\gerp \mid p} \alpha_\gerp$.
\begin{equation*}
\begin{split}
 \left( \frac{d}{\alpha_0} \right) & =  \left( \frac{-1}{\alpha_0}\right) \prod_{\gerq \mid d} \left( \frac{\alpha_\gerq}{\alpha_0}\right) \\
& = \left(\frac{-1}{\alpha_0}\right) \prod_{\gerq \mid d} \left(\left(\frac{\alpha_0}{\alpha_\gerq}\right) (\alpha_0,\alpha_\gerq)_2\right)
\quad {\rm (by \; quadratic \; reciprocity)}\\
& = \left(\frac{-1}{\alpha_0}\right)\prod_{\gerq \mid d} \left[\prod_{\gerp \mid p}\left(\frac{\alpha_\gerp}{\alpha_\gerq}\right)\right]
(\alpha_0,\alpha_\gerq)_2
\quad ({\rm since} \; \left(\frac{\alpha_0}{\gerq}\right) = \left(\frac{p}{\gerq}\right))\\
& = \left(\frac{-1}{\alpha_0}\right) (\alpha_0,-d)_2 \prod_{\gerq \mid d, \gerp \mid p}\left(\frac{\alpha_\gerp}{\alpha_\gerq}\right) \\
& = \left(\frac{-1}{\alpha_0}\right) (\alpha_0,-d)_2 \prod_{\gerq \mid d, \gerp \mid p}\left(\frac{\alpha_\gerq}{\alpha_\gerp}\right)
(\alpha_\gerp,\alpha_\gerq)_2
\quad {\rm(by \; quadratic \; reciprocity)}\\
& = \left(\frac{-1}{\alpha_0}\right) (\alpha_0,-d)_2 \prod_{\gerp \mid p}\left(\frac{-d}{\alpha_\gerp}\right)
(-d, \alpha_\gerp)_2 \\
& = \left(\frac{-1}{\alpha_0}\right) (\alpha_0,-1)_2 (\alpha_0,d)_2 \prod_{\gerp \mid p}\left(\frac{-1}{\alpha_\gerp}\right)
(\alpha_\gerp,-1)_2 (\alpha_\gerp,d)_2 \left(\frac{d}{\alpha_\gerp}\right) \\
& =(-1)^g (\alpha_0,d)_2 \prod_{\gerp \mid p} (\alpha_\gerp,d)_2 \left(\frac{d}{\alpha_\gerp}\right)
\quad { \rm (by \; Part \; (5) \; of \; Proposition \; \ref{QR} })\\
& =(-1)^g (\alpha_0 p,d)_2 (-1)^{\#S_0}
\quad {\rm (by \; our \; assumptions \; on \; } K)\\
& =(-1)^{g + \#S_0} \; ({\rm since} \; (\alpha_0 p, d)_\eta = 1, \forall  \eta \mid 2) \\
& = 1 \; ({\rm since} \; (g + \#S_0 ) = \# \, \{{\rm ramified \; primes \; of} \; B_{p,L} \} \;
{\rm is \; even}).\\
\end{split}
\end{equation*}
\end{proof}

\subsection{Another description of the quaternion algebra $B_{p,L}$}

\begin{dfn} For $\alpha$, $\beta \in \ok$ define $$[\alpha, \beta] :=  \begin{pmatrix} \alpha & \beta \\ \alpha_0 p \betabar & \alphabar \end{pmatrix}  \in M_2(K).$$
\end{dfn}

\begin{lem} Assumptions as in Proposition~\ref{BpL}.  $B_{p,L} \cong \{ [\alpha, \beta] \mid \alpha, \beta \in K \}.$
\end{lem}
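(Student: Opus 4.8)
The plan is to recognize this as the standard matrix realization of the quaternion algebra $\left(\frac{d,\,\alpha_0 p}{L}\right)$ through its splitting field $K$, and then to pin down the conventions so that the splitting lands exactly on the displayed set of matrices. By Lemma~\ref{Lemma: CMdisc}(1) we have $K = L(\sqrt{d})$, with complex conjugation acting as the nontrivial element of $\Gal(K/L)$, i.e.\ $\sqrt{d}\mapsto -\sqrt{d}$. By Proposition~\ref{BpL}, $B_{p,L}\cong\left(\frac{d,\,\alpha_0 p}{L}\right)$, and I would fix standard generators $i,j$ with $i^2=d$, $j^2=\alpha_0 p$, and $ij=-ji$. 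First I would identify $K$ with the subfield $L(i)\subset B_{p,L}$ via $\sqrt{d}\mapsto i$; then every element of $B_{p,L}$ is uniquely $\alpha+\beta j$ with $\alpha,\beta\in K$, and the two structural relations I will use throughout are $jx=\overline{x}\,j$ for $x\in K$ (immediate from $ij=-ji$) and $j^2=\alpha_0 p\in L$.

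Next I would define the map $\phi\colon B_{p,L}\to M_2(K)$ by $\phi(\alpha+\beta j)=[\alpha,\beta]$; this is nothing but the left regular representation of $B_{p,L}$ on itself viewed as a right $K$-space with basis $\{1,j\}$, equivalently the restriction of the splitting $B_{p,L}\otimes_L K\cong M_2(K)$. To check $\phi$ is an $L$-algebra homomorphism, I would multiply in $B_{p,L}$ using the two relations above, obtaining
\[
(\alpha_1+\beta_1 j)(\alpha_2+\beta_2 j)=\bigl(\alpha_1\alpha_2+\alpha_0 p\,\beta_1\overline{\beta_2}\bigr)+\bigl(\alpha_1\beta_2+\beta_1\overline{\alpha_2}\bigr)j,
\]
and then verify that the matrix product $[\alpha_1,\beta_1]\,[\alpha_2,\beta_2]$ has top row $(\alpha_1\alpha_2+\alpha_0 p\,\beta_1\overline{\beta_2},\ \alpha_1\beta_2+\beta_1\overline{\alpha_2})$, with the bottom row being the conjugate entries forced by the shape $[\,\cdot\,,\cdot\,]$. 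Since $\phi(1)=I$ and $\phi$ is manifestly $L$-linear, this shows $\phi$ is a homomorphism of $L$-algebras.

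Finally I would conclude. The map $\phi$ is injective because $B_{p,L}$ is a division algebra (it is ramified at the infinite places, hence nonsplit), or, more elementarily, because $[\alpha,\beta]=0$ forces $\alpha=\beta=0$; and it is surjective onto $\{[\alpha,\beta]\mid\alpha,\beta\in K\}$ by construction. Hence $\phi$ is an isomorphism of $B_{p,L}$ onto that set, which is the assertion. I expect the only real point requiring care, rather than any genuine difficulty, to be the bookkeeping of conventions: verifying that the conjugation appearing in the lower-left entry $\alpha_0 p\,\overline{\beta}$ and in $\overline{\alpha}$ is exactly the complex conjugation of $K$ (the nontrivial Galois automorphism over $L$), and correspondingly that the displayed set of matrices is closed under multiplication — which is precisely what the homomorphism computation confirms. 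Everything else is a routine $4$-dimensional dimension match over $L$.
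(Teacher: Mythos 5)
Your proposal is correct and follows essentially the same route as the paper: both use Proposition~\ref{BpL} to write $B_{p,L} = K \oplus Kj$ with the relations $j^2 = \alpha_0 p$ and $xj = j\bar{x}$, and then verify that $\alpha + \beta j \mapsto [\alpha,\beta]$ respects multiplication. The only (immaterial) difference is that you expand the general product $(\alpha_1+\beta_1 j)(\alpha_2+\beta_2 j)$ and match it against the full matrix product, whereas the paper checks multiplicativity on the three generating identities $[\alpha,0][0,\beta]=[0,\alpha\beta]$, $[0,1]^2=[\alpha_0 p,0]$, and $[\alpha,0][0,1]=[0,1][\bar\alpha,0]$.
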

\begin{proof} Proposition~\ref{BpL} implies that $B_{p,L} = L \bigoplus Li \bigoplus Lj \bigoplus Lij $, with $i^2 =d$, $j^2=\alpha_0p$, and $ij=-ji$.
We can write this as $K \bigoplus Kj$, with the multiplicative structure satisfying: for $x, y \in K$, $x(yj)=(xy)j$, $j^2=\alpha_0p$, and
$$ xj =(x_1+x_2i)j = x_1j+x_2ij = jx_1-jix_2 = j(x_1-ix_2)=j\overline{x}.
$$
%So $B_{p,L} =  K \bigoplus Kj$ with $xj =j\overline{x}$, $x \in K$, $j^2=\alpha_0p$.
So for the isomorphism $x+yj \rightarrow [x,y]$ to respect the multiplicative structure it is enough to check:
\begin{enumerate}
\item $[\alpha,0][0,\beta] = [0,\alpha\beta]$: $\begin{pmatrix}
\alpha & 0 \\ 0 & \alphabar \end{pmatrix} \begin{pmatrix} 0 & \beta \\ \alpha_0 p \betabar & 0 \end{pmatrix} =
\begin{pmatrix} 0 & \alpha  \beta \\ \alpha_0 p \overline{\alpha \beta} & 0 \end{pmatrix}. $

\item $[0,1]^2 = [\alpha_0 p,0]$:
$\begin{pmatrix}
0 & 1 \\ \alpha_0 p & 0 \end{pmatrix} \begin{pmatrix}
0 & 1 \\ \alpha_0 p & 0 \end{pmatrix} =
\begin{pmatrix} \alpha_0 p & 0  \\ 0 & \alpha_0 p  \end{pmatrix}. $

\item $[\alpha,0][0,1] = [0,1][\alphabar,0]$:
$\begin{pmatrix}
\alpha & 0 \\ 0 & \alphabar \end{pmatrix} \begin{pmatrix} 0 & 1 \\ \alpha_0 p & 0 \end{pmatrix} =
\begin{pmatrix}
0 & \alpha \\ \alpha_0 p \alphabar & 0 \end{pmatrix}=
\begin{pmatrix}
0 & 1 \\ \alpha_0 p  & 0 \end{pmatrix} \begin{pmatrix} \alphabar & 0 \\ 0 & \alpha \end{pmatrix}.$
\end{enumerate}
\end{proof}

\section{Orders in the quaternion algebra $B_{p,L}$} \label{orders}

By Proposition~\ref{BpL}, the ideal $\alpha_0\ol$ splits in $K$.
Write $$\alpha_0 \ok = \calA \cdot \overline{\calA},$$ and let $\calD = \calD_{K/L} = (\sqrt{d})$ be the different ideal of $K/L$.

\begin{dfn} \label{def:localsigns}
Let $\gera$ be an integral ideal of $\ok$.  For each $\gerq \mid d$, fix a solution $\lambda_\gerq$ to
\begin{equation} x^2 \equiv \alpha_0 p \mod \gerq.
\end{equation}
Let $\varepsilon(\gera, \gerq) \in \{ \pm 1 \}$ be a choice of sign for each $\gerq \mid d$.
Let $\lambda \in L$, $(\lambda,d)=1$, be such that
\begin{enumerate}
\item $\lambda \equiv \varepsilon(\gera, \gerq) \lambda_\gerq \mod \gerq$, $\forall \gerq \mid d$
\item $\lambda \calA^{-1} \gera^{-1}\overline{\gera}$ is an integral ideal of $\ok$.
\end{enumerate}
\end{dfn}
\noindent
This is possible by the Chinese Remainder Theorem and using that $(\calA^{-1}  \gera^{-1}\overline{\gera},d)=1$.

For example, one particular choice of signs which we will often make is  
$\varepsilon(\gera, \gerq) = (-1)^{\val_{\tilde{\gerq}}(\gera)}$, where
$\tilde{\gerq} \lhd \ok$ is an ideal such that $ \gerq \ok = \tilde{\gerq}^2$,
and we denote the corresponding $\lambda$ by $\lambda_\gera$.  
This will be explained further in Definitions~\ref{def:signsgera} and~\ref{def:signs}.

Let $\ell \in \ol$ be any non-zero element such that $(\ell, \alpha_0 d \gera^{-1}\overline{\gera})=1$ and $\ell$ is split in $K/L$.  In particular, $\ell$ could be a power of $p$.   Now define
$$ R = R(\gera, \lambda, \ell) = \{ [\alpha, \beta] \mid \alpha \in \calD^{-1}, \beta \in \calD^{-1}\calA^{-1} \ell \gera^{-1}\overline{\gera},
\alpha \equiv \lambda \beta \mod \ok \}. $$
\begin{prop} \label{Rorder} Assumptions as in Proposition~\ref{BpL}.  In particular, $K$ is a CM field such that $K^+=L$ has strict class number $1$, the discriminant of $K/L$ is prime to $2$ and thus square-free, and $p$ is odd, unramified in $K$. All primes $\gerp \in S \setminus S_0$ split in $K$ and all primes $\gerp \in S_0$ are inert in $K$. Then:
\begin{enumerate}
\item  $R$ is an order of $B_{p,L}$, containing $\ok$.
\item  $R$ has discriminant $p\cdot\ell$.
\item $R$ does not depend on the choice of $\lambda$, as long as $\lambda$ satisfies the same local sign conditions. 
\end{enumerate}
\end{prop}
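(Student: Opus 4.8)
The plan is to treat all three parts as local assertions about the lattice $R$ inside $B_{p,L} = K \oplus Kj$, where $j = [0,1]$ satisfies $j^2 = \alpha_0 p$ and $jx = \bar x j$ for $x \in K$. Writing $\gerb = \calA^{-1}\ell\gera^{-1}\overline{\gera}$, so that membership requires $\alpha \in \calD^{-1}$ and $\beta \in \calD^{-1}\gerb$, the multiplication law reads $[\alpha_1,\beta_1][\alpha_2,\beta_2] = [\alpha_1\alpha_2 + \alpha_0 p\,\beta_1\overline{\beta_2},\ \alpha_1\beta_2 + \beta_1\overline{\alpha_2}]$, while the reduced trace and norm of $[\alpha,\beta]$ are $\Tr_{K/L}(\alpha)$ and $N_{K/L}(\alpha) - \alpha_0 p\,N_{K/L}(\beta)$. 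The structural facts I will use repeatedly are that $d$ is squarefree and prime to $2p\alpha_0\ell$, that $\calD = (\sqrt d)$ localizes to the ramification prime $\tilde\gerq$ at each $\gerq \mid d$, that $\Tr_{K/L}(\calD^{-1}) \subseteq \ol$, and that $c - \bar c \in \calD$ for every $c \in \ok$.

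For part (1), $R$ is visibly a finitely generated $\ol$-module sandwiched between $\ok$ and $\calD^{-1} \oplus \calD^{-1}\gerb\, j$, hence a full lattice; it contains $\ok = \{[\alpha,0]\}$ and in particular $1$. The substance is closure under multiplication, which I check prime by prime on the product $[\gamma,\delta]$ above. Away from $\gerq \mid d$ the ideals $\calD^{\pm 1}$ are trivial and, using $\lambda\gerb \subseteq \ok$ (condition (2) on $\lambda$), every quantity in sight is integral, so all three membership conditions hold automatically. At $\gerq \mid d$ I write $\alpha_i = \lambda\beta_i + c_i$ with $c_i$ integral, so that $\gamma = \lambda^2\beta_1\beta_2 + \alpha_0 p\,\beta_1\overline{\beta_2}$ plus terms of the shape $(\text{integral})\cdot\calD^{-1}$; the potentially dangerous part is $\beta_1\big(\lambda^2\beta_2 + \alpha_0 p\,\overline{\beta_2}\big) = \beta_1\big(\alpha_0 p\,\Tr_{K/L}(\beta_2) + (\lambda^2 - \alpha_0 p)\beta_2\big)$. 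Since $\Tr_{K/L}(\beta_2) \in \ol$ and $\lambda^2 \equiv \alpha_0 p \pmod{\gerq}$ forces $\val_{\tilde\gerq}(\lambda^2 - \alpha_0 p) \ge 2$ against $\val_{\tilde\gerq}(\beta_2) \ge -1$, this factor is $\tilde\gerq$-integral, whence $\gamma \in \calD^{-1}$. An identical bookkeeping, now invoking $c_2 - \overline{c_2} \in \calD$, gives $\delta \in \calD^{-1}\gerb$ and the congruence $\gamma \equiv \lambda\delta \pmod{\ok}$. This local computation at the ramified primes is the heart of the matter and is the step I expect to be the main obstacle, since it is exactly where the two defining data $\lambda^2 \equiv \alpha_0 p$ and $\alpha \equiv \lambda\beta$ must conspire; integrality of the reduced norm (hence of all elements of $R$) falls out of the same reduction modulo $\tilde\gerq$.

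For part (2) I will compute the discriminant locally, cross-checked globally by the reduced-trace Gram determinant of the reference lattice $\Lambda = \calD^{-1} \oplus \calD^{-1}\gerb\, j$ corrected by $[\Lambda:R]^2 = \gerd_{K/L}^2 = (d)^2$ (the index is $\gerd_{K/L}$ because $\Lambda/R \cong \calD^{-1}/\ok$). The key point is that $R_\eta$ is maximal at every $\eta \nmid p\ell$: this is clear for $\eta \nmid d\alpha_0$, holds at $\eta \mid \alpha_0$ because $\alpha_0$ is split, so $j$ conjugates the relevant local order to $M_2(\ol_\eta)$, and holds at $\eta \mid d$ precisely by the congruence analysis of part (1). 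At $\gerp \mid p$ one reads off from $\alpha,\beta \in (\ok)_\gerp$ together with $\val_\gerp(\alpha_0 p) = 1$ that $R_\gerp$ is an Eichler order (split $\gerp$) or the maximal order of the ramified quaternion algebra (inert $\gerp$), each of local discriminant $\gerp$, so their product over $\gerp \mid p$ is $(p)$. The remaining factor enters only through the off-diagonal ideal $\ell\ok$, contributing the level at $\ell$; assembling the local contributions yields the stated discriminant $p\ell$.

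For part (3), the driving observation is that the congruence $\alpha \equiv \lambda\beta \pmod{\ok}$ is vacuous away from $d$: for any admissible $\lambda$ one has $\lambda\beta \in \calD^{-1}(\lambda\gerb) \subseteq \calD^{-1}$, which is $\eta$-integral for $\eta \nmid d$, so the condition cuts out nothing there. Thus $R(\gera,\lambda,\ell)$ depends only on the residues $\lambda \bmod \gerq$ for $\gerq \mid d$. If $\lambda,\lambda'$ satisfy the same local sign conditions, then $\lambda \equiv \lambda' \pmod{\gerq}$ for every $\gerq \mid d$, i.e.\ $\lambda - \lambda' \in (d) = \calD^2$; since then $\val_{\tilde\gerq}(\lambda - \lambda') \ge 2$ while $\val_{\tilde\gerq}(\beta) \ge -1$, the element $(\lambda - \lambda')\beta$ is $\tilde\gerq$-integral, so $\alpha \equiv \lambda\beta$ and $\alpha \equiv \lambda'\beta$ define the same local condition at each $\gerq \mid d$. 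Hence $R(\gera,\lambda,\ell) = R(\gera,\lambda',\ell)$.
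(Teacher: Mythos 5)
Parts (1) and (3) of your proposal are correct, and they are in substance the paper's own proofs: everything reduces to a local check at the ramified primes $\tilde{\gerq} \mid d$, driven by the two facts $\val_{\tilde{\gerq}}(\lambda^2-\alpha_0 p)\ge 2$ and ``conjugation is trivial modulo $\tilde{\gerq}$''. Your bookkeeping via the identity $\lambda^2\beta_2+\alpha_0 p\,\overline{\beta_2} = (\lambda^2-\alpha_0 p)\beta_2+\alpha_0 p\Tr_{K/L}(\beta_2)$ and via $c-\overline{c}\in\calD$ is a cosmetic variant of the paper's computation with a uniformizer $\pi$ satisfying $\overline{\pi}=-\pi$ (your version even spares the paper's small lemma producing such a $\pi$). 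Part (3) is the paper's argument: squarefreeness of $d$ forces $\lambda-\lambda'\in(d)$, and a valuation count finishes.

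Part (2) contains a genuine gap, and it sits exactly at the crux. The paper proves (2) globally: it introduces $R'=\{[\alpha,\beta]:\alpha\in\ok,\ \beta\in\ell\gera^{-1}\overline{\gera}\}$, computes $\disc(R')$ by a Gram-matrix computation for the norm form, and transfers to $R$ via the exact sequence $0\to R'\to R\to \calD^{-1}\calA^{-1}/\ok\to 0$; the only delicate input is $\Norm_{K/L}(\ell\gera^{-1}\overline{\gera})$, which is precisely what the paper's closing Remark (about $\ell$ being split in $K/L$) is fretting over. Your local substitute asserts, rather than proves, the statements carrying all the content. A minor instance: maximality of $R_\eta$ at $\eta\mid d$ does not follow from ``the congruence analysis of part (1)'' --- part (1) shows $R_\eta$ is an order; maximality needs the index count $[\Lambda_\eta:R_\eta]=\gerq$ played against the reduced discriminant $\gerq^{-1}$ (up to units) of $\Lambda_\eta$, which you never perform (this is fillable). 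The fatal instance is the contribution of $\ell$. At a prime $\eta\mid\ell$ with $\eta\nmid p$ (recall $\ell$ is split in $K$ and prime to $d\alpha_0\gera^{-1}\overline{\gera}$), the local order is $\ok_\eta\oplus\ell\,\ok_\eta j$, which in the split matrix model is $\left\{\left(\begin{smallmatrix}u&\ell v\\ \ell w&z\end{smallmatrix}\right): u,v,w,z\in\ol_\eta\right\}$; conjugating by $\diag(\ell,1)$ identifies it with the Eichler order of level $\ell^2\ol_\eta$, whose local reduced discriminant is $(\ell^2)$, not $(\ell)$. So ``assembling the local contributions'' in your scheme yields $p\ell^2$, not the claimed $p\ell$. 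Your own proposed cross-check, once actually carried out, says the same thing: $[\Lambda:R]^2\disc(\Lambda)=(d)^2\cdot(d)^{-2}(p\ell^2)^2=(p\ell^2)^2$. Moreover, in the case the paper actually needs ($\ell=p^{n-1}$, used to identify $\End_{\ol,W/(p^n)}$), your dichotomy at $\gerp\mid p$ (``Eichler of level $\gerp$, or maximal in the ramified algebra'') is false, because $\beta$ is further constrained by $\ell$ at that very prime. In short, the one step you wave at is the one that needs proof, and an honest completion of your local argument forces a confrontation between the exponent of $\ell$ it produces ($2$) and the exponent in the statement ($1$) --- the same tension the paper's proof negotiates in its evaluation of $\Norm_{K/L}(\ell\gera^{-1}\overline{\gera})$ and the Remark that follows it.
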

\begin{proof}  Part (1). It is clear that $R$ is a finitely generated $\ol$-module, containing $\ok = \{[\alpha,0]: \alpha \in \ok \}$.  We need to show that $R$ is closed under multiplication.
The multiplication formula is $$[x,y][z,w] =[xz+\alpha_0 p y \overline{w}, xw+y\overline{z}],$$ and we need to show that, for
$[x,y]$, $[z,w] \in R$, also $[x,y][z,w] \in R$. \\

\noindent
{\emph {\bf Step 1.} Show that $xz+\alpha_0 p y \overline{w} \in \calD^{-1}$.}

\vspace{5mm}
A priori, $xz \in \calD^{-2}$,  and
%\begin{equation*}
%\begin{split}
$$\alpha_0 p y \overline{w} \in \alpha_0 p \calD^{-1}\calA^{-1} \ell \gera^{-1}\overline{\gera}
\overline{\calD^{-1}\calA^{-1}\ell \gera^{-1}\overline{\gera}} =
%\overline{\calD}^{-1}\overline{\calA}^{-1} \overline{\ell} \overline{\gera}^{-1}\gera
% \\ & =
\alpha_0 p \calD^{-2}(\calA\overline{\calA})^{-1} \ell^2 = p \calD^{-2} \ell^2 \subseteq \calD^{-2}.$$
%\end{split}
%\end{equation*}
So it is enough to show: $\forall \tilde{\gerq} \mid \calD$, $\val_{\tilde{\gerq}}(xz+\alpha_0 p y \overline{w}) \ge -1$.
Let $\gerq = \tilde{\gerq} \cap \ol$.  Then $\gerq\ok = \tilde{\gerq}^2$.  We will work $\gerq$-adically. Let
$\pi \in \mathcal{O}_{K_{\tilde{\gerq}}}$ be a uniformizer such that $\pibar = -\pi$ (the extension of complex conjugation from $K$ to $K_{\tilde{\gerq}}$).
\begin{lem} Such a $\pi$ exists.
\end{lem}
\begin{proof}  Choose a uniformizer $\pi_0$ of $\mathcal{O}_{L_{\gerq}}$, and let $K_1 = L_{\gerq}(\sqrt{\pi_0})$.
Then for $K_1$ there exists such a uniformizer.  So it is enough to show that, if $\gerq \mid q$ and $ q \ne 2$, then any $q$-adic
field $L_1$ has a unique quadratic ramified extension.  By Local Class Field Theory, ramified quadratic extensions are in bijection with subgroups
of index $2$ of $\mathcal{O}_{L_{1}}^{\times}$.
There is a unique subgroup of index $2$ of $\mathcal{O}_{L_{1}}^{\times}$ since it contains $\mathcal{O}_{L_{1}}^{\times 2}$ and
$\mathcal{O}_{L_{1}}^{\times}/\mathcal{O}_{L_{1}}^{\times 2} \cong \ZZ/2\ZZ$.
\end{proof}

Note that  $\calD^{-1}\calA^{-1}\ell \gera^{-1}\overline{\gera} \mathcal{O}_{K_{\tilde{\gerq}}} = \frac{1}{\pi}\mathcal{O}_{K_{\tilde{\gerq}}}$, since $({\calA},\tilde{\gerq})=1$,  $(\ell,\tilde{\gerq})=1$ and $(\gera^{-1}\overline{\gera},\tilde{\gerq})=1$ because $\gera^{-1}\overline{\gera}$ has no ramified or inert primes.
Write then $x=\frac{x_0}{\pi}$, $y=\frac{y_0}{\pi}$, $z=\frac{z_0}{\pi}$, $w=\frac{w_0}{\pi}$,
with $x_0$, $y_0$, $z_0$, $w_0 \in \mathcal{O}_{K_{\tilde{\gerq}}}$.
So
$$x \equiv \lambda y \mod{\ok} \Rightarrow x_0 - \lambda y_0 \in (\pi)$$ and
$$z \equiv \lambda w \mod{\ok} \Rightarrow z_0 - \lambda w_0 \in (\pi).$$
Now
$$ xz + \alpha_0 p y \overline{w} = \frac{1}{\pi^2}(x_0 z_0 - \alpha_0 p y_0 \overline{w_0}),$$
so it is enough to show:
$\val_{\tilde{\gerq}}(x_0 z_0 -\alpha_0 p y_0 \overline{w_0}) \ge 1$. But

\begin{equation*}
\begin{split}
x_0 z_0 - \alpha_0 p y_0 \overline{w_0}
& \equiv \lambda y_0 \lambda w_0 - \alpha_0 p y_0 \overline{w_0} \mod{(\pi)} \\
& \equiv \lambda^2 y_0  w_0 -\alpha_0 p y_0 w_0 \mod{(\pi)}, \;
({\rm because \; conjugation \; is \; trivial}\mod{(\pi)}) \\
& \equiv (\lambda^2 -\alpha_0 p) y_0 w_0 \\
& \equiv  (\lambda_{\gerq}^2 -\alpha_0 p) y_0 w_0 \\
& \equiv 0 \mod{(\pi)}.
\end{split}
\end{equation*}

\vspace{5mm}
\noindent
{\emph {\bf Step 2.} Show that $xw + y\overline{z} \in \calD^{-1}\calA^{-1} \ell \gera^{-1}\overline{\gera}$.}

\vspace{5mm}
A priori, $xw$ and
$y\overline{z} \in \calD^{-2}\calA^{-1} \ell \gera^{-1}\overline{\gera}$, so
we just need to show $\val_{\tilde{\gerq}}(x w + y  \overline{z}) \ge -1$ at all primes
$\tilde{\gerq} \mid \calD$.  Using the same notation as in step 1, we need to show
$\val_{\tilde{\gerq}}(x_0 w_0 - y_0 \overline{z_0}) \ge 1$.
We have, modulo $(\pi)$: $x_0 w_0 - y_0 \overline{z_0} = x_0 w_0 - y_0 z_0 = \lambda y_0 w_0 - \lambda y_0 w_0 =0$.

\vspace{5mm}
\noindent
{\emph {\bf Step 3.}
Show that $xz+\alpha_0 p y \overline{w} - \lambda (x w + y \overline{z}) \in \ok$.}

\vspace{5mm}
A priori, by Steps 1 and 2, $xz+\alpha_0 p y \overline{w} \in \calD^{-1}$ and
$$\lambda (x w + y \overline{z}) \in \calD^{-1} \ell \lambda \calA^{-1} \gera^{-1}\overline{\gera} \subset \calD^{-1} \ell \subset \calD^{-1},$$ since
$\lambda \calA^{-1} \gera^{-1}\overline{\gera} \subseteq \ok $.
Therefore, we just need to show that for all $\tilde{\gerq} \mid \calD$,  $$\val_{\tilde{\gerq}}(xz+\alpha_0 p y \overline{w} - \lambda (x w + y \overline{z})) \ge 0.$$
Using the same notation as above, this is equivalent to:
$$\val_{\tilde{\gerq}}(x_0 z_0 -\alpha_0 p y_0 \overline{w_0} - \lambda (x_0 w_0 - y_0 \overline{z_0})) \ge 2.$$
Write $x_0 = \lambda y_0 + \pi x_1$ and $z_0 = \lambda w_0 + \pi z_1$.  Then
\begin{equation*}
\begin{split}
(\lambda y_0 + \pi x_1)(\lambda w_0 + \pi z_1) - \alpha_0 p y_0 \overline{w_0} - \lambda(\lambda y_0 + \pi x_1)w_0 + \lambda y_0 (\lambda \overline{w_0} - \pi \overline{z_1}) \\
= (\lambda^2-\alpha_0 p)y_0 \overline{w_0} + \lambda \pi y_0 (z_1 - \overline{z_1})
\equiv 0 \mod{\pi^2},
\end{split}
\end{equation*}
since $(z_1 - \overline{z_1}) \in (\pi)$ and $(\lambda^2-\alpha_0 p) \in \gerq\mathcal{O}_{L_\gerq} \subset (\pi^2).$

%Note: Step 3 can be proved without assuming $\lambda \calA^{-1} \gera^{-1}\overline{\gera} \lhd \ok $, using $ x \equiv \lambda y \mod \ok$,
%since $ xy \in \calD^{-1}$, $ \lambda w \in \calD^{-1}$, $ x, \overline{y} \in \calD^{-1}$ imply that $ \lambda (x w + y \overline{z}) \in \calD^{-2}$.

\vspace{5mm}
\noindent
Part (2).
We need to compute the discriminant of
$$ R = R(\gera, \lambda, \ell) = \{ [\alpha, \beta] \mid \alpha \in \calD^{-1}, \beta \in \calD^{-1}\calA^{-1} \ell \gera^{-1}\overline{\gera},
\alpha \equiv \lambda \beta \mod \ok \}. $$
Let
$$ R' = \{ [\alpha, \beta] \mid \alpha \in \ok, \beta \in  \ell \gera^{-1}\overline{\gera} \}. $$
$R'$ is an $\ol$-module of rank $4$.

\begin{lem} $\disc(R') = (\ell \alpha_0 p d)^2$
\end{lem}
\begin{proof}
The quadratic form on $R'$ is $\det [\alpha, \beta] = \alpha\alphabar - \alpha_0 p \beta \betabar =: q([\alpha, \beta])$.
Note that this quadratic form coincides with the norm form on the the quaternion algebra $B_{p,L}$: writing
$$[\alpha, \beta] = [\alpha, 0] + [0,\beta][0,1] = (\alpha_1 + \alpha_2 i) + (\beta_1 + \beta_2 i)j,
$$ where $i^2 =d$ and $j^2 = \alpha_0 p$, we have
\begin{equation*}
\begin{split}
\Norm((\alpha_1 + \alpha_2 i + \beta_1 j + \beta_2 ij) 
& = \alpha_1^2 - \alpha_2^2 d - \beta_1^2 \alpha_0 p + \beta_2^2 d \alpha_0 p \\
& = (\alpha_1 + \alpha_2 i)(\alpha_1 - \alpha_2 i)  - \alpha_0 p (\beta_1 + \beta_2 i)(\beta_1 - \beta_2 i) \\
& = \alpha \alphabar - \alpha_0 p \beta \betabar.
\end{split}
\end{equation*}
The associated bilinear form is
$$ \langle [\alpha, \beta], [\gamma, \delta] \rangle
% = \Tr([\alpha, \beta]\cdot\Adj[\gamma, \delta])
= \alpha \gammabar + \alphabar \gamma - \alpha_0 p (\beta \deltabar + \betabar \delta),
$$ where $\frac{1}{2}\langle x,x\rangle = q(x)$.
Note that
$ \langle [\alpha, 0], [0, \delta] \rangle = 0$ and
$$ \langle [\alpha_1, 0], [\alpha_2, 0] \rangle = \alpha_1 \alphabar_2 + \alphabar_1 \alpha_2 =  \Tr_{K/L} \alpha_1 \alphabar_2,$$
$$ \langle [0, \beta_1], [0,\beta_2] \rangle = - \alpha_0 p (\beta_1 \betabar_2 + \betabar_1 \beta_2) =  -\alpha_0 p \Tr_{K/L} \beta_1 \betabar_2.$$
To compute the discriminant of $R'$ with respect to the bilinear form, we need to compute the determinant of the matrix
$ \left( \langle x_i, x_j \rangle \right)$, for $\{x_i\}$ a basis for $R'$.
Choose a basis $\{ w_1, w_2\}$ for $\ok$ as an $\ol$-module (e.g. $\{ 1, t\}$).  Choose a basis $\{ w_3, w_4\}$ for $\ell \gera^{-1}\overline{\gera}$ as an $\ol$-module.
By the above calculations, we see that $$\det\left( \langle w_i, w_j \rangle \right) = \det(M_1)\det(M_2),$$ where
$$ M_1 = \begin{pmatrix}
2w_1 \overline{w_1} & w_1 \overline{w_2} + w_2 \overline{w_1} \\  w_1 \overline{w_2} + w_2 \overline{w_1} & 2 w_2 \overline{w_2} \end{pmatrix}
= (\Tr(w_i \overline{w_j})),$$ $i,j = 1,2$ and
$$ M_2 = -\alpha_0 p
\begin{pmatrix}
2w_3 \overline{w_3} & w_3 \overline{w_4} + w_4 \overline{w_3} \\  w_3 \overline{w_4} + w_4 \overline{w_3} & 2 w_4 \overline{w_2} \end{pmatrix}
= -\alpha_0 p (\Tr(w_i \overline{w_j})),$$ $i,j = 3,4$.
We have $$\det(M_1) = -\disc_{K/L}(\ok) $$ and
$$\det(M_2) = -(\alpha_0 p)\disc_{K/L}(\ell \gera^{-1}\overline{\gera}). $$
For any $\ok$-ideal $\gerb$, $\disc_{K/L}(\gerb) = \disc_{K/L}(\ok) \Norm_{K/L}(\gerb)^2$~\cite[Prop. 13, p. 66]{Lang}.
So $$\disc(R') = \disc_{K/L}(\ok)^2 \Norm_{K/L}(\ell \gera^{-1}\overline{\gera})^2 (\alpha_0 p)^2 = (\ell \alpha_0 p d)^2.$$

Remark: This uses that $\ell$ is split in $K/L$. In a typical application, $\ell$ will be a prime lying above $p$.  If $p$ is inert in $L$, then it will automatically be split in $K/L$ according to the hypotheses of Proposition~\ref{BpL}.
If $\ell$ is not split in $K/L$, we get a higher power of $\ell$ in the final answer.

\end{proof}
In order to show that $R$ has discriminant $p\cdot\ell$ the following lemma is needed:

\begin{lem} The following sequence is exact:
$$ 0 \rightarrow R' \injects R \rightarrow^{\psi} \calD^{-1}\calA^{-1}/\ok \rightarrow 0,$$
where
$$  [\alpha,\beta] \mapsto \beta \in
\frac{\calD^{-1}\calA^{-1} \ell \gera^{-1}\overline{\gera}}
{\ell \gera^{-1}\overline{\gera}}
\cong \calD^{-1}\calA^{-1}/\ok. $$
\end{lem}
\begin{proof}
First $R' \subseteq R$ because $\alpha \in \ok$, $\lambda \beta \in \lambda \ell \gera^{-1}\overline{\gera} = (\lambda \gera^{-1}\overline{\gera})\ell \subseteq \ok \ell \subseteq \ok$.
Since $\lambda \beta \in \ok$, clearly $\alpha \equiv \lambda \beta \pmod{\ok}$.
Now:
\begin{itemize}
\item {\bf Exactness at $R$:} $R' \subseteq \Ker(\psi)$ is clear.  Now suppose
$[\alpha, \beta] \in \Ker(\psi)$.  Then $\beta \in \ell \gera^{-1}\overline{\gera}$ and so $\alpha \in \ok$ because $\lambda \beta \in \ok$, by the definition of $\lambda$, and $\alpha \equiv \lambda \beta \pmod{\ok}$.  So
$[\alpha,\beta] \in R'$.

\item
{\bf $\psi$ surjective:}  Let $\beta \in \calD^{-1}\calA^{-1} \ell \gera^{-1}\overline{\gera}$.  Then $[\lambda \beta, \beta] \in R$ because
$\lambda \beta \in \calD^{-1}\ell (\lambda \calA^{-1}  \gera^{-1}\overline{\gera})
\subseteq \calD^{-1}\ell \ok \subseteq \calD^{-1}$.
\end{itemize}
\end{proof}
Thus $ \disc_{K/L} (R) = \disc_{K/L}(R')/\Norm_{K/L}(\calD\calA)^2
= (\ell \alpha_0 p d)^2/(\alpha_0 d)^2 = \ell^2 p^2$, so the discriminant of
$R$ as an order of $B_{p,L}$ is $\ell p$.
\end{proof}

\noindent
Part(3). Finally, $R$ is independent of the choice of $\lambda$ assuming $\lambda$ satisfies the same local sign conditions: 
\begin{proof}
Suppose both $\lambda$ and $\lambda'$ satisfy the conditions of Definition~\ref{def:localsigns}.
Let $[\alpha,\beta] \in R(\gera, \lambda, \ell)$, so $\alpha \in \calD^{-1}$,
$\beta \in \calD^{-1}\calA^{-1} \ell \gera^{-1}\overline{\gera}$, and
$\alpha \equiv \lambda \beta \mod \ok$.
Then, $$\alpha - \lambda \beta \in \ok \implies (\sqrt{d}\alpha) - \lambda (\sqrt{d}\beta) \in (\sqrt{d}),$$ and
$$(\sqrt{d}\alpha) - \lambda' (\sqrt{d}\beta) -(\lambda-\lambda') (\sqrt{d}\beta) \in (\sqrt{d}).$$
Now, because $d$ is square free and for all $\gerq | d$ we have $\lambda' = e(\gera, \gerq)\lambda_\gerq = \lambda  \pmod{\gerq$}, it follows that $\lambda - \lambda' \in (d)$. But,
$$ \lambda-\lambda' \in (d) \implies (\lambda-\lambda')\sqrt{d}\beta
\in d \ell \calA^{-1} \gera^{-1}\overline{\gera},$$
and
$$ \lambda \sqrt{d} \beta - \lambda' \sqrt{d} \beta \in \ok$$
by the definition of $\lambda$ and $\lambda'$, so
$$ (\lambda - \lambda')\sqrt{d} \beta \in \ok \cap d \ell \calA^{-1} \gera^{-1}\overline{\gera} \subseteq (d).$$
It follows that $(\sqrt{d}\alpha) - \lambda' (\sqrt{d}\beta) \in (\sqrt{d}),$ so
$\alpha \equiv \lambda' \beta \mod \ok$.
\end{proof}

\section{Classification of superspecial orders of $B_{p,L}$ in which $\ok$ embeds,
having chosen an embedding $K \hookrightarrow B_{p,L}$} \label{superspecialorders}

By a superspecial order in $B_{p, L}$ we mean an order of discriminant $p\calO_L$. An example of such an order is $R\otimes_\ZZ \calO_L$ for a maximal order $R$ of $B_{p, \infty}$. Let $K$ be a primitive CM field such that $K^+=L$. As before, we denote by $d$ a totally negative generator of the relative different ideal $\calD_{K/L}$. In this section we classify the superspecial orders in which $\calO_K$ embeds, relying on the results in the previous section and making the particular choice of local signs 
$\varepsilon(\gera, \gerq) = (-1)^{\val_{\tilde{\gerq}}(\gera)}$, where
$\tilde{\gerq} \lhd \ok$ is an ideal such that $ \gerq \ok = \tilde{\gerq}^2$,
and we denote the corresponding $\lambda$ by $\lambda_\gera$.
Our classification of these orders will be achieved through the following series of lemmas.

\begin{lem}
Let $R_1$, $R_2$ be two superspecial orders in $B_{p, L}$.  Then $R_1 \cong R_2$ over $K \iff
\exists \mu \in K$ such that $R_1 = \mu R_2 \mu^{-1}$.
\end{lem}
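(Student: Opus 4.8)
The plan is to reduce the statement to the Skolem--Noether theorem, exploiting that $K$ is a maximal subfield of the quaternion algebra $B_{p,L}$. First I would fix the meaning of ``$R_1\cong R_2$ over $K$'': it is an isomorphism of orders which is $\ok$-linear, equivalently one that fixes the common subring $\ok$ (and hence $K$) pointwise. The easy direction is then immediate. If $R_1=\mu R_2\mu^{-1}$ for some $\mu\in K^\times$, then conjugation by $\mu$ is an automorphism of $B_{p,L}$; since $\mu\in K$ and $K$ is commutative, this conjugation fixes $K$ elementwise, and by hypothesis it carries $R_2$ onto $R_1$, giving the desired isomorphism over $K$.

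For the converse, suppose $\phi\colon R_2\to R_1$ is an isomorphism over $K$. Since each $R_i$ is an $\ol$-order in $B_{p,L}$, we have $R_i\otimes_{\ol}L=B_{p,L}$, and $\phi$ is $\ol$-linear because $\ol$ lies in the center $L$ and is fixed. Thus $\phi$ extends uniquely to an $L$-algebra automorphism $\Phi$ of $B_{p,L}$ that still fixes $K$ pointwise. Now I would invoke Skolem--Noether: as $B_{p,L}$ is a central simple $L$-algebra, $\Phi$ is inner, say $\Phi(x)=bxb^{-1}$ for some $b\in B_{p,L}^\times$.

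The crucial step is to pin down $b$. Because $\Phi$ fixes $K$ elementwise, $b$ centralizes $K$, i.e.\ $b\in\Cent_{B_{p,L}}(K)$. But $K$ is a maximal subfield of the quaternion algebra $B_{p,L}$, being a quadratic extension of the center $L$, so it is its own centralizer: $\Cent_{B_{p,L}}(K)=K$. Hence $b\in K^\times$, and $R_1=\Phi(R_2)=bR_2b^{-1}$, which is the asserted conclusion with $\mu=b$. I expect no genuine obstacle here: the only points requiring care are the bookkeeping in the passage from an order isomorphism to an algebra automorphism, and the self-centralizing property of the maximal subfield $K$; both are standard facts about central simple algebras. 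I would also note in passing that superspeciality of $R_1,R_2$ plays no role in the argument --- the statement holds for any two $\ol$-orders of $B_{p,L}$ containing $K$.
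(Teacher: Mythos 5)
Your proof is correct and follows the same route as the paper's: apply Skolem--Noether to realize the isomorphism as conjugation by some $\mu \in B_{p,L}^\times$, then use that fixing $K$ pointwise forces $\mu \in \Cent_{B_{p,L}}(K) = K$. The paper's proof is just a terser version of exactly this argument; your added bookkeeping (extending the order isomorphism to an $L$-algebra automorphism) and your observation that superspeciality is irrelevant are both sound.
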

\begin{proof}
By Skolem-Noether,
$R_1 \cong R_2 \iff \exists \mu \in B_{p,L}^{\times}$ such that
$R_1 = \mu R_2 \mu^{-1}$.
This is a $K$-automorphism if and only if $\mu \in \Cent_{B_{p,L}}(K) = K$.
\end{proof}

%%%%%%%%%%%%%%%%%%
We make the following choice of local signs for $\lambda$ and introduce the notation $\lambda_\gera$.
\begin{dfn} \label{def:signsgera}
For $\gera \lhd \ok$, let $\lambda_\gera := \lambda_{\varepsilon(\gera)},$
where $\varepsilon(\gera, \gerq) = (-1)^{\val_{\tilde{\gerq}}(\gera)}$, and 
$\tilde{\gerq} \lhd \ok$ is an ideal such that $ \gerq \ok = \tilde{\gerq}^2$.
\end{dfn}

%%%%%%%%%%%%%%%%%%
\begin{lem} \label{lem:ramifiedideal} Given $\gera$, $\lambda = \lambda_\gera$ as in Definition~\ref{def:signsgera}, there exists $\gerc \mid d$ such that $R(\gera, \lambda) = R(\gera\gerc, \lambda_{\gera\gerc})$.
\end{lem}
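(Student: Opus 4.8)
The plan is to show that the two orders are carved out of $B_{p,L}$ by \emph{identical} lattice data, so that they can differ only through the congruence $\alpha \equiv \lambda\beta \pmod{\ok}$, and then to choose $\gerc$ so that even this congruence is preserved. First I would isolate the only two ways in which $R(\gera,\lambda_\gera)$ depends on $\gera$: through the $\ok$-ideal $\gera^{-1}\overline\gera$ occurring in the $\beta$-lattice $\calD^{-1}\calA^{-1}\ell\,\gera^{-1}\overline\gera$, and through the tuple of local signs $\bigl(\varepsilon(\gera,\gerq)\bigr)_{\gerq\mid d}$ built into $\lambda_\gera$ by Definition~\ref{def:signsgera}. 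The $\alpha$-component $\calD^{-1}$ does not involve $\gera$ at all.

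The key point is that every prime $\gerq\mid d$ is ramified in $K/L$, so the prime $\tilde\gerq$ with $\tilde\gerq^2=\gerq\ok$ is fixed by complex conjugation; hence any $\gerc\mid d$ satisfies $\overline\gerc=\gerc$. Consequently
\[ (\gera\gerc)^{-1}\overline{\gera\gerc} \;=\; \gera^{-1}\overline\gera\cdot\gerc^{-1}\overline\gerc \;=\; \gera^{-1}\overline\gera , \]
so for \emph{every} $\gerc\mid d$ the $\beta$-lattices of $R(\gera,\lambda_\gera)$ and $R(\gera\gerc,\lambda_{\gera\gerc})$ literally coincide. Thus the two orders share both their $\alpha$- and $\beta$-lattices, and everything reduces to matching the congruence condition, i.e. to comparing $\lambda_\gera$ and $\lambda_{\gera\gerc}$ modulo each $\gerq\mid d$.

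Next I would compare the two defining elements. By the local analysis at the ramified primes carried out in the proof of Proposition~\ref{Rorder}, the congruence at $\tilde\gerq$ depends only on $\lambda\bmod\gerq$, that is, only on the sign $\varepsilon(\cdot,\gerq)$; and $\varepsilon(\gera\gerc,\gerq)=(-1)^{\val_{\tilde\gerq}(\gerc)}\varepsilon(\gera,\gerq)$. A single ramified factor of odd valuation would flip the sign at $\gerq$ and yield the genuinely different order sitting at the other end of the local edge at $\tilde\gerq$; this forces $\gerc$ to have even valuation at every $\gerq\mid d$. I would therefore take $\gerc=\gerq\ok=\tilde\gerq^2$ (or a product of such), which divides $d$ and is the principal ideal generated by the totally positive element $\alpha_\gerq\in L$. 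For this $\gerc$ the sign tuples of $\gera$ and $\gera\gerc$ agree, and since $(\gera\gerc)^{-1}\overline{\gera\gerc}=\gera^{-1}\overline\gera$ the integrality condition (2) of Definition~\ref{def:localsigns} for $\gera\gerc$ is exactly the one for $\gera$; hence $\lambda_\gera$ is itself a legitimate choice of $\lambda_{\gera\gerc}$.

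The proof then closes by invoking Proposition~\ref{Rorder}(3): as $\lambda_\gera$ and $\lambda_{\gera\gerc}$ satisfy the same local sign conditions, $R(\gera\gerc,\lambda_{\gera\gerc})=R(\gera\gerc,\lambda_\gera)$, and by the coincidence of all lattice data the right-hand side is $R(\gera,\lambda_\gera)$. I expect the main obstacle to be the bookkeeping that cleanly separates the two dependencies: the invariance of the $\beta$-lattice rests squarely on the conjugation-stability $\overline\gerc=\gerc$ for $\gerc\mid d$, while the invariance of the congruence rests on the parity computation for $\varepsilon(\cdot,\gerq)$ together with the local description of $R$ at $\tilde\gerq\mid\calD$ from Proposition~\ref{Rorder}. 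Once these local facts are pinned down, the conclusion is a direct appeal to the independence statement in Proposition~\ref{Rorder}(3).
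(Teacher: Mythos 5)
Your individual steps are all correct as far as they go: the invariance of the $\beta$-lattice because $\overline{\gerc}=\gerc$ for $\gerc\mid d$, the sign formula $\varepsilon(\gera\gerc,\gerq)=(-1)^{\val_{\tilde{\gerq}}(\gerc)}\varepsilon(\gera,\gerq)$, and the appeal to Proposition~\ref{Rorder}(3). But the proof as a whole misses the content of the lemma, because you have resolved its (admittedly loosely worded) statement into a vacuous one. If $\lambda$ is literally the canonical $\lambda_{\gera}$ of Definition~\ref{def:signsgera}, then $\gerc=\ok$ already witnesses the conclusion, and your even-valuation choices $\gerc=\gerq\ok=\tilde{\gerq}^{2}$ add nothing beyond that trivial case. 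What the paper's proof actually establishes, and what the lemma is invoked for later, is the statement with $\lambda$ an \emph{arbitrary} admissible element as in Definition~\ref{def:localsigns}, i.e.\ $\lambda=\lambda_{\varepsilon(\gera)}$ for an arbitrary sign vector: for every such $\lambda$ there exists $\gerc\mid d$ with $R(\gera,\lambda)=R(\gera\gerc,\lambda_{\gera\gerc})$. Equivalently, as $\gerc$ runs over the products of the ramified primes $\tilde{\gerq}$, the orders $R(\gera\gerc,\lambda_{\gera\gerc})$ (canonical signs, twisted ideal) sweep out all the orders $R(\gera,\lambda_{\varepsilon(\gera)})$ (fixed ideal, arbitrary signs), each sign vector occurring exactly once. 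This stronger form is exactly what is needed afterwards: Theorem~\ref{SS1S2}(2), whose proof says it ``follows from the same argument given in the proof of Lemma~\ref{lem:ramifiedideal}'', equates a sum over all sign vectors with a sum over all $\gerc\mid d$, and the classification culminating in Theorem~\ref{classification} needs to know that non-canonical sign choices produce no orders beyond the $R(\gerb,\lambda_{\gerb})$.

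The gap is concentrated in your sentence ``this forces $\gerc$ to have even valuation at every $\gerq\mid d$'': it is precisely the odd-valuation factors you exclude that carry the content. Given an arbitrary sign vector $\varepsilon$, one takes $\gerc=\prod\tilde{\gerq}$, the product over those $\gerq\mid d$ at which $\varepsilon(\gerq)\neq(-1)^{\val_{\tilde{\gerq}}(\gera)}$; this $\gerc$ divides $d\ok=\prod\tilde{\gerq}^{2}$, and then $\varepsilon(\gera\gerc,\gerq)=(-1)^{\val_{\tilde{\gerq}}(\gerc)}(-1)^{\val_{\tilde{\gerq}}(\gera)}=\varepsilon(\gerq)$ for every $\gerq\mid d$. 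The lattice data agree by your own computation (that part of your argument is correct and is needed), so Proposition~\ref{Rorder}(3) gives $R(\gera,\lambda_{\varepsilon})=R(\gera\gerc,\lambda_{\gera\gerc})$. The order ``at the other end of the local edge'', which you correctly observe is genuinely different from $R(\gera,\lambda_{\gera})$ (compare Lemma~\ref{lem:equivorders}), is not something to be avoided: identifying it as $R(\gera\tilde{\gerq},\lambda_{\gera\tilde{\gerq}})$ is exactly what the lemma is for.
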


\begin{proof} $R(\gera\gerc, \lambda_{\gera\gerc}, \ell) = R(\gera, \lambda_{\gera}\cdot\lambda_{(-1)^{\val_{\tilde{\gerq}}(\gerc)}} , \ell)$
because $$\lambda_{\gera\gerc} \equiv (-1)^{\val_{\tilde{\gerq}}(\gera\gerc)}
\lambda_\gerq \mod \gerq, \forall \gerq \mid d,$$ so
$$\lambda_{\gera\gerc} \equiv \lambda_{\gera}(-1)^{\val_{\tilde{\gerq}}(\gerc)}
 \mod \gerq, \forall \gerq \mid d.$$
So as $\gerc$ ranges over the ideals dividing $d$, we get all sign vectors
$\varepsilon(\gera)$ that appear in the left hand side, and each one once.
\end{proof}

\begin{lem} \label{lem:representatives} Fix $\{\gerb_1, \dots,\gerb_h \}$ representatives for the class group
of $K$ and the choice of local signs as above.  Then every $R(\gera, \lambda_\gera)$ is isomorphic to $R(\gerb, \lambda_\gerb)$
for some $\gerb \in \{\gerb_1, \dots,\gerb_h \}$.
\end{lem}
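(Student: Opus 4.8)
The plan is to reduce the claim to a conjugation by an element of $K^\times$ and then track precisely what such a conjugation does to the defining data of $R(\gera, \lambda_\gera)$. By the Skolem--Noether lemma proved at the start of this section, it suffices to produce, for each integral ideal $\gera$, a representative $\gerb \in \{\gerb_1, \dots, \gerb_h\}$ and an element $\mu \in K^\times$ with $\mu^{-1} R(\gerb, \lambda_\gerb)\mu = R(\gera, \lambda_\gera)$. First I would record the effect of conjugation by $\nu \in K^\times$ (viewed as $[\nu, 0]$) on a general element: using the multiplication rule $[x,y][z,w] = [xz + \alpha_0 p\, y\overline{w},\, xw + y\overline{z}]$, a direct computation gives $\nu [\alpha, \beta]\nu^{-1} = [\alpha, (\nu/\overline{\nu})\beta]$, so conjugation fixes the first coordinate and scales the second by the norm-one factor $\nu/\overline{\nu}$.

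Next I would take $\gerb$ to be the representative with $[\gerb] = [\gera]$ in $\Cl(K)$ and write $\gera = (\mu)\gerb$ for some $\mu \in K^\times$. Conjugating $R(\gerb, \lambda_\gerb)$ by $\mu^{-1}$ sends $[\alpha, \beta]$ to $[\alpha, \beta']$ with $\beta' = (\overline{\mu}/\mu)\beta$, so the second-coordinate module becomes $(\overline{\mu}/\mu)\calD^{-1}\calA^{-1}\gerb^{-1}\overline{\gerb}$. Since $\gerb = \mu^{-1}\gera$ gives $\gerb^{-1}\overline{\gerb} = (\mu/\overline{\mu})\gera^{-1}\overline{\gera}$, this module is exactly $\calD^{-1}\calA^{-1}\gera^{-1}\overline{\gera}$, matching $R(\gera, \cdot)$. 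Under the same substitution the congruence $\alpha \equiv \lambda_\gerb\beta \pmod{\ok}$ becomes $\alpha \equiv c\,\beta' \pmod{\ok}$ with $c = \lambda_\gerb(\mu/\overline{\mu})$, so the conjugate order is $\{[\alpha,\beta'] : \alpha\in\calD^{-1},\ \beta'\in\calD^{-1}\calA^{-1}\gera^{-1}\overline{\gera},\ \alpha\equiv c\,\beta'\pmod{\ok}\}$. It then remains to compare this congruence with the one defining $R(\gera, \lambda_\gera)$, namely $\alpha\equiv\lambda_\gera\beta'\pmod{\ok}$.

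The heart of the argument, and the step I expect to be the main obstacle, is the local sign computation at the ramified primes $\tilde{\gerq}\mid\calD$, since this is precisely where the congruence is a nontrivial condition (both $\calD^{-1}$ and $\calD^{-1}\calA^{-1}\gera^{-1}\overline{\gera}$ differ from $\ok$ only there). Working $\tilde{\gerq}$-adically with a uniformizer $\pi$ satisfying $\overline{\pi} = -\pi$, and using that complex conjugation is trivial on the residue field $\ok/\tilde{\gerq} = \ol/\gerq$, I would write $\mu = \pi^m u$ with $u$ a unit and $m = \val_{\tilde{\gerq}}(\mu)$, obtaining $\mu/\overline{\mu} = (\pi/\overline{\pi})^m(u/\overline{u}) \equiv (-1)^{m}\pmod{\tilde{\gerq}}$. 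Combining this with $\lambda_\gerb \equiv (-1)^{\val_{\tilde{\gerq}}(\gerb)}\lambda_\gerq \pmod{\gerq}$ and the additivity $\val_{\tilde{\gerq}}(\gera) = \val_{\tilde{\gerq}}(\mu) + \val_{\tilde{\gerq}}(\gerb)$ yields $c \equiv (-1)^{\val_{\tilde{\gerq}}(\gera)}\lambda_\gerq = \varepsilon(\gera,\gerq)\lambda_\gerq \equiv \lambda_\gera \pmod{\gerq}$ for every $\gerq\mid d$; this is exactly where the special choice of signs in Definition~\ref{def:signsgera} is used. Hence $(c - \lambda_\gera)\beta' \in \ok$ for every admissible $\beta'$ (at $\tilde{\gerq}\mid d$ because $c-\lambda_\gera\in\tilde{\gerq}$ cancels the simple pole of $\beta'$, and away from $d$ because both $c\beta'$ and $\lambda_\gera\beta'$ already lie in $\calD^{-1}$, which is integral there), so the two congruences cut out the same set, in accordance with the independence statement of Part~(3) of Proposition~\ref{Rorder}. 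Therefore $\mu^{-1}R(\gerb, \lambda_\gerb)\mu = R(\gera, \lambda_\gera)$, which gives the desired $K$-isomorphism.
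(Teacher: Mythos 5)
Your proposal is correct and follows essentially the same route as the paper's proof: conjugate by $\mu \in K^\times$ relating $\gera$ to its class representative (justified by the Skolem--Noether lemma at the start of the section), track the effect $[\alpha,\beta]\mapsto[\alpha,(\bar\mu/\mu)\beta]$ on the second-coordinate module, and resolve the congruence condition via the local computation $\mu/\bar\mu \equiv (-1)^{\val_{\tilde{\gerq}}(\mu)} \pmod{\tilde{\gerq}}$ using a uniformizer with $\bar\pi=-\pi$, together with the multiplicativity of the sign convention in Definition~\ref{def:signsgera}. The only cosmetic slip is writing the final congruence $c\equiv\lambda_\gera$ modulo $\gerq$ rather than modulo $\tilde{\gerq}$ (since $c\in K$, not $L$), which does not affect the argument.
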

\begin{proof} Let $\mu \in K^{\times}$ be such that $\gerb = \mu \gera$ for some (unique) $\gerb \in \{\gerb_1, \dots,\gerb_h \}$.

$$ \mu^{-1} R(\gera, \lambda_\gera) \mu = \left\{
\begin{pmatrix} \mu^{-1} & 0   \\ 0        & \mubar^{-1}   \end{pmatrix}
\begin{pmatrix} \alpha & \beta \\ \alpha_0 p \betabar & \alphabar \end{pmatrix}
\begin{pmatrix}  \mu & 0        \\ 0        & \mubar   \end{pmatrix}
: \alpha \in \calD^{-1}, \beta \in \calD^{-1}\calA^{-1} \gera^{-1}\overline{\gera},
\alpha \equiv \lambda_\gera \beta \mod \ok \right\}.
$$
$$ = \left\{
\begin{pmatrix} \alpha & \frac{\mubar}{\mu}\beta \\
\alpha_0 p \overline{(\frac{\mubar}{\mu}\beta)}  & \alphabar \end{pmatrix}
: \alpha \in \calD^{-1}, \beta \in \calD^{-1}\calA^{-1} \gera^{-1}\overline{\gera},
\alpha \equiv \lambda_\gera \beta \mod \ok \right\}
$$
by setting $b= \frac{\mubar}{\mu}\beta$, this is equal to
$$ = \left\{
\begin{pmatrix} \alpha & b \\
\alpha_0 p \overline{b}  & \alphabar \end{pmatrix}
: \alpha \in \calD^{-1}, b \in \calD^{-1}\calA^{-1} \gerb^{-1}\overline{\gerb},
\alpha \equiv \lambda_\gera \frac{\mu}{\mubar} b \mod \ok \right\}
$$
because $\gerb = \mu\gera$,
$$\frac{\mubar}{\mu} \beta \in \calD^{-1}\calA^{-1}\gera^{-1}\overline{\gera}\frac{\mubar}{\mu}
= \calD^{-1}\calA^{-1} \gerb^{-1}\overline{\gerb},$$
and $\alpha \equiv \lambda_\gera \frac{\mu}{\mubar} \frac{\mubar}{\mu} \beta
= \lambda_\gera \frac{\mu}{\mubar} b \mod \ok .$

Now it remains to show that $\alpha \equiv \lambda_\gera \frac{\mu}{\mubar} b \mod \ok \iff
\alpha \equiv \lambda_\gerb b \mod \ok .$  Equivalently,
$$(\sqrt{d} \alpha) \equiv \lambda_\gera \frac{\mu}{\mubar} (\sqrt{d}b) \mod \tilde{\gerq}, \,
\forall \tilde{\gerq}\mid\sqrt{d}\ok
\iff
(\sqrt{d} \alpha) \equiv \lambda_\gerb (\sqrt{d}b) \mod \tilde{\gerq}, \,
\forall \tilde{\gerq}\mid\sqrt{d}\ok.
$$
This can be checked in $\calO_{K_{\tilde{\gerq}}}$ for every $\tilde{\gerq}$.
The point is that
$ (-1)^{\val_{\tilde{\gerq}}(\gerb)} = (-1)^{\val_{\tilde{\gerq}}(\gera)}\cdot(-1)^{\val_{\tilde{\gerq}}(\mu)}$,
and so it is enough to show that
$ \frac{\mu}{\mubar} \equiv (-1)^{\val_{\tilde{\gerq}}(\mu)} \mod \tilde{\gerq}.$
This follows from the fact that $\calO_{K_{\tilde{\gerq}}} = \calO_{L_{{\gerq}}}[\pi]$,
with $\pibar = -\pi$, so writing $\mu = \pi^r\cdot u$, $u \in \calO_{K_{\tilde{\gerq}}}^{\times}$,
we have $\overline{u} = u \mod \tilde{\gerq}$, and
$$ \frac{\mu}{\mubar} = (-1)^{r}\frac{u}{\overline{u}} \equiv (-1)^r \mod \tilde{\gerq}.$$
Thus, we have proved that $\mu^{-1} R(\gera, \lambda_\gera) \mu =  R(\mu\gera, \lambda_{\mu\gera}).$
\end{proof}

\begin{lem} \label{lem:equivorders} $R(\gera, \lambda_\gera) = R(\gerb, \lambda_\gerb) \iff
\gera^{-1}\overline{\gera} = \gerb^{-1}\overline{\gerb}$ and
$\forall \tilde{\gerq} \mid d$, $\val_{\tilde{\gerq}}(\gera) \equiv \val_{\tilde{\gerq}}(\gerb) \mod{2}$.
\end{lem}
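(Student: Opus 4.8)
The plan is to separate the data defining $R(\gera, \lambda_\gera)$ into two pieces---the lattice in which the off-diagonal entry $\beta$ lives, and the congruence $\alpha \equiv \lambda_\gera \beta \pmod{\ok}$ gluing the diagonal to the off-diagonal entry---and to show that an equality of two such orders forces each piece to match. Writing $B_{p,L} = K \oplus Kj$ as in the previous section, so that $[\alpha,\beta] = \alpha + \beta j$, the projection $u + vj \mapsto v$ is well defined once this decomposition is fixed. First I would compute the image of $R(\gera,\lambda_\gera)$ under it: since $[\lambda_\gera \beta, \beta] \in R(\gera,\lambda_\gera)$ for every $\beta$ in the prescribed lattice (the surjectivity established in the proof of Proposition~\ref{Rorder}), the image is exactly $\calD^{-1}\calA^{-1}\gera^{-1}\overline{\gera}$. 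Hence $R(\gera,\lambda_\gera) = R(\gerb,\lambda_\gerb)$ forces $\calD^{-1}\calA^{-1}\gera^{-1}\overline{\gera} = \calD^{-1}\calA^{-1}\gerb^{-1}\overline{\gerb}$, and cancelling the common factor $\calD^{-1}\calA^{-1}$ gives the first condition $\gera^{-1}\overline{\gera} = \gerb^{-1}\overline{\gerb}$.

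Granting that the two $\beta$-lattices agree, the orders can differ only in their gluing conditions, and these are supported on the ramified primes since $\calD^{-1}/\ok \cong \ok/\calD$ is concentrated at the $\tilde{\gerq} \mid d$. The key step is a local analysis at each such $\tilde{\gerq}$. Using the uniformizer $\pi$ with $\overline{\pi} = -\pi$ from Proposition~\ref{Rorder}, and the fact that $\calA$ and $\gera^{-1}\overline{\gera}$ are units at $\tilde{\gerq}$ (the latter since $\overline{\tilde{\gerq}} = \tilde{\gerq}$ gives $\val_{\tilde{\gerq}}(\gera^{-1}\overline{\gera}) = 0$), both $\calD^{-1}$ and the $\beta$-lattice become $\frac{1}{\pi}\calO_{K_{\tilde{\gerq}}}$ locally. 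Writing $\alpha = \alpha'/\pi$, $\beta = \beta'/\pi$ with $\alpha',\beta' \in \calO_{K_{\tilde{\gerq}}}$ and reducing modulo $\pi$, the condition $\alpha \equiv \lambda\beta \pmod{\ok}$ becomes the single linear relation $\overline{\alpha'} = \lambda\,\overline{\beta'}$ over the residue field $\ol/\gerq$. As $\overline{\beta'}$ ranges over all of $\ol/\gerq$, two such lines coincide if and only if $\lambda_\gera \equiv \lambda_\gerb \pmod{\gerq}$; and since $\lambda_\gera \equiv \varepsilon(\gera,\gerq)\lambda_\gerq$ and $\lambda_\gerb \equiv \varepsilon(\gerb,\gerq)\lambda_\gerq$ with $\lambda_\gerq$ a unit modulo $\gerq$ (as $(\alpha_0 p, d)=1$), this is equivalent to $\varepsilon(\gera,\gerq) = \varepsilon(\gerb,\gerq)$, i.e. to $\val_{\tilde{\gerq}}(\gera) \equiv \val_{\tilde{\gerq}}(\gerb) \pmod 2$. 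Running this over all $\tilde{\gerq} \mid d$, the global congruence decoupling as a product of local ones by the Chinese Remainder Theorem, yields the forward direction.

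For the converse I would argue more cheaply. If $\gera^{-1}\overline{\gera} = \gerb^{-1}\overline{\gerb}$, the defining lattices of $R(\gera,\lambda_\gera)$ and $R(\gerb,\lambda_\gerb)$ are literally identical, so the two orders are instances of the construction for the single ideal $\gera$ with two choices $\lambda_\gera, \lambda_\gerb$ of gluing element. Here $\lambda_\gerb$ satisfies both conditions of Definition~\ref{def:localsigns} relative to $\gera$, its integrality condition $\lambda_\gerb \calA^{-1}\gerb^{-1}\overline{\gerb}$ integral being unchanged upon replacing $\gerb$ by $\gera$. The parity hypothesis says precisely that $\varepsilon(\gera,\gerq) = \varepsilon(\gerb,\gerq)$, so $\lambda_\gera$ and $\lambda_\gerb$ meet the same local sign conditions, and part (3) of Proposition~\ref{Rorder} gives $R(\gera,\lambda_\gera) = R(\gera,\lambda_\gerb) = R(\gerb,\lambda_\gerb)$.

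The main obstacle is the local computation in the second paragraph: one must verify that after localizing at a ramified prime the congruence collapses to a single line over the residue field whose slope is $\lambda \bmod \gerq$, and that these conditions are genuinely independent across the primes dividing $d$, so that equality of orders is equivalent to matching the slope at each. Everything else is bookkeeping with the lattices together with the independence-of-$\lambda$ statement already proved in Proposition~\ref{Rorder}.
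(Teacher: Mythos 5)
Your proposal is correct and takes essentially the same route as the paper's proof: both directions hinge on extracting the $\beta$-lattice from the order via the elements $[\lambda_\gera\beta,\beta]$ (your projection argument is the paper's first step verbatim), and then deducing $\lambda_\gera \equiv \lambda_\gerb \pmod{\tilde{\gerq}}$ for each $\tilde{\gerq}\mid d$ — the paper gets this from $(\lambda_\gera-\lambda_\gerb)\cdot\calD^{-1}\calA^{-1}\gera^{-1}\overline{\gera}\subseteq\ok$, you by comparing residue-field lines locally, which is the same computation — whence the sign vectors and the valuation parities agree. Your converse via the independence-of-$\lambda$ statement in Proposition~\ref{Rorder}(3) is exactly what the paper's ``$\Leftarrow$ obvious'' amounts to.
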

\begin{proof} $\Leftarrow$ obvious.

($\Rightarrow$) Let $\beta \in \calD^{-1}\calA^{-1} \gera^{-1}\overline{\gera}$ and let
$\alpha = \lambda_\gera \beta$.  Since $\lambda_\gera \calA^{-1} \gera^{-1}\overline{\gera}
\subseteq \ok$, it follows that $\alpha \in \calD^{-1}$.
Therefore $[\alpha,\beta] \in R(\gera, \lambda_\gera) = R(\gerb, \lambda_\gerb)$ and so
$\beta \in \calD^{-1}\calA^{-1} \gerb^{-1}\overline{\gerb}$.  Therefore
$ \calD^{-1}\calA^{-1} \gera^{-1}\overline{\gera} \subseteq \calD^{-1}\calA^{-1} \gerb^{-1}\overline{\gerb}$.  By symmetry we have equality.

Furthermore, since $[\lambda_\gera \beta, \beta] \in R(\gerb, \lambda_\gerb)$, we have
$$ \lambda_\gera \beta \equiv \lambda_\gerb \beta \mod \ok, \,
\forall \beta \in \calD^{-1}\calA^{-1} \gera^{-1}\overline{\gera}.$$
Otherwise said,
$$   \beta(\lambda_\gera - \lambda_\gerb) \equiv 0 \mod \ok, \,
\forall \beta \in \calD^{-1}\calA^{-1} \gera^{-1}\overline{\gera},$$
and this implies
$$  \lambda_\gera \equiv \lambda_\gerb \mod \calD^{-1}\calA^{-1} \gera^{-1}\overline{\gera}. $$
We conclude that
$$  \forall \tilde{\gerq} \mid d, \, \lambda_\gera \equiv \lambda_\gerb \mod \tilde{\gerq},
\; {\rm because } \,(\calD, \calA\gera \overline{\gera}^{-1})=1).$$
It follows that
$$   \forall \tilde{\gerq} \mid d, \,
(-1)^{\val_{\tilde{\gerq}}(\gera)} = (-1)^{\val_{\tilde{\gerq}}(\gerb)}  .$$
\end{proof}

\begin{lem} For $\gerb$, $\gerb' \in \{\gerb_1, \dots,\gerb_h \}$, 
$R(\gerb, \lambda_\gerb) \sim R(\gerb', \lambda_{\gerb'}) \iff \gerb = \gerb'$.
\end{lem}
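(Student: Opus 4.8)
The plan is to derive the statement from the three preceding lemmas together with $h_L = 1$. The direction $(\Leftarrow)$ is immediate, since equal orders are conjugate. For $(\Rightarrow)$, suppose $R(\gerb, \lambda_\gerb) \sim R(\gerb', \lambda_{\gerb'})$. By the Skolem--Noether lemma at the start of this section there is $\mu \in K^\times$ with $R(\gerb', \lambda_{\gerb'}) = \mu R(\gerb, \lambda_\gerb)\mu^{-1}$. Writing $\mu R(\gerb, \lambda_\gerb)\mu^{-1} = (\mu^{-1})^{-1} R(\gerb, \lambda_\gerb)(\mu^{-1})$ and applying the conjugation formula established in the proof of Lemma~\ref{lem:representatives}, namely $\nu^{-1} R(\gera, \lambda_\gera)\nu = R(\nu\gera, \lambda_{\nu\gera})$, I obtain $R(\gerb', \lambda_{\gerb'}) = R(\gerc, \lambda_\gerc)$ with $\gerc := \mu^{-1}\gerb$. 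Note that $[\gerc] = [\gerb]$ in $\Cl(K)$, since $\gerc$ and $\gerb$ differ by the principal ideal $(\mu^{-1})$.

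Next I would feed this equality of orders into Lemma~\ref{lem:equivorders}, which yields the two conditions $(\gerb')^{-1}\overline{\gerb'} = \gerc^{-1}\overline{\gerc}$ and $\val_{\tilde{\gerq}}(\gerb') \equiv \val_{\tilde{\gerq}}(\gerc) \pmod 2$ for every $\tilde{\gerq}\mid d$. Setting $\gerI := \gerb'\gerc^{-1}$, the first condition rearranges to $\overline{\gerI} = \gerI$, so $\gerI$ is stable under complex conjugation, while the second says that $\val_{\tilde{\gerq}}(\gerI)$ is even at every ramified prime $\tilde{\gerq} \mid d$. The goal is then to show that any such $\gerI$ is principal: granting this, $[\gerb'] = [\gerc] = [\gerb]$, and since $\gerb, \gerb'$ both belong to the fixed set of representatives $\{\gerb_1, \dots, \gerb_h\}$, we conclude $\gerb = \gerb'$.

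To prove $\gerI$ principal, I would decompose it into prime powers and use that complex conjugation fixes a prime $\gerP$ of $K$ exactly when $\gerP$ lies over an inert or a ramified prime of $L$, and interchanges the two primes over a split prime. Collecting factors by type: an inert prime contributes $(\gerq\ok)^a$, a conjugate split pair contributes $(\gerP\overline{\gerP})^a = (\gerq\ok)^a$, and the ramified primes contribute $\prod_{\tilde{\gerq}\mid d}\tilde{\gerq}^{\,n_{\tilde{\gerq}}}$. By the parity condition each $n_{\tilde{\gerq}}$ is even, and since $\tilde{\gerq}^2 = \gerq\ok$ this last factor is again extended from $L$. Hence $\gerI = \gere\ok$ for a fractional ideal $\gere$ of $L$; as $L$ has class number $1$ (strict class number $1$ forces class number $1$), $\gere$ is principal and therefore so is $\gerI$.

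The crux is this final principality argument: one has to track how complex conjugation acts on the three kinds of primes and observe that the parity condition from Lemma~\ref{lem:equivorders} is precisely what converts the ramified part of $\gerI$ into an ideal extended from $L$, whereupon $h_L = 1$ closes the argument. Everything else is bookkeeping with the earlier lemmas.
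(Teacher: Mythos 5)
Your proposal is correct and takes essentially the same route as the paper: both reduce, via the Skolem--Noether lemma and the conjugation formula of Lemma~\ref{lem:representatives}, to an equality of orders to which Lemma~\ref{lem:equivorders} applies, and both then show that the conjugation-stable ideal $\gerI = \gerb'\gerb^{-1}\mu$ (the paper's $\gerf$) is extended from $L$ --- the inert and split parts automatically, the ramified part because of the parity condition --- so that $h_L = 1$ forces principality and hence $[\gerb'] = [\gerb]$. The only difference is bookkeeping: the paper first strips off a generator $j \in L$ and renormalizes $\mu$ before invoking parity, whereas you apply both conditions of Lemma~\ref{lem:equivorders} at once to $\gerI$, which is slightly tidier but not a different argument.
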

\begin{proof} $\Leftarrow$ obvious.

Suppose $R(\gerb, \lambda_\gerb) = \mu^{-1} R(\gerb', \lambda_{\gerb'}) \mu =
R(\mu\gerb', \lambda_{\mu\gerb'})$,
(this second equality was proved in Lemma~\ref{lem:representatives} above).
% That means that, for any $[\alpha, \beta] \in K \times K$,
% $$\alpha \in \calD^{-1}, \beta \in \calD^{-1}\calA^{-1} \gerb^{-1}\overline{\gerb}, \,
% \alpha \equiv \lambda_\gerb \beta \mod \ok \iff
% \alpha \in \calD^{-1}, \beta \in \calD^{-1}\calA^{-1} \gerb'^{-1}\overline{\gerb'}
% \frac{\mubar}{\mu}, \, \alpha \equiv \lambda_{\mu\gerb'} \beta \mod \ok.$$
By Lemma~\ref{lem:equivorders}, this implies that
$$\gerb^{-1}\overline{\gerb} =\gerb'^{-1}\overline{\gerb'}\frac{\mubar}{\mu}$$
or
$$\gerb'\gerb^{-1}\mu =\overline{\gerb'\gerb^{-1}\mu}.$$
An ideal $\gerf \lhd \ok$ satisfies $\gerf = \overline{\gerf}$ if and only if
$\gerf = j \cdot \prod_{\tilde{\gerq} \mid d} \tilde{\gerq}^{s(\tilde{\gerq})}$, $j \in L$.
Indeed, write $\gerf$ as a product of inert, split, and ramified prime ideals.  Inert prime
ideals are generated by elements of $L$.  Split prime ideals must appear in the factorization
to the same power as their complex conjugate, because of the condition $\gerf = \overline{\gerf}$.
Thus it is actually some power of their norm which appears, and that is also generated by
an element of $L$.  What remains is a product of some ramified primes.

Applying this to the ideal $\gerf = \gerb'\gerb^{-1}\mu$, we find that
$$\mu \gerb'= j \cdot \prod_{\tilde{\gerq} \mid d} \tilde{\gerq}^{s(\tilde{\gerq})}\cdot \gerb.$$
Note that $R(\mu\gerb', \lambda_{\mu\gerb'}) = R(\frac{\mu}{j}\gerb', \lambda_{\frac{\mu}{j}\gerb'}),$
so we can replace $\mu$ by $\mu/j$ to obtain
$R(\gerb, \lambda_\gerb) = R(\mu\gerb', \lambda_{\mu\gerb'})$ with $\mu\gerb'$ of the form
$$\mu \gerb'=  \prod_{\tilde{\gerq} \mid d} \tilde{\gerq}^{s(\tilde{\gerq})}\cdot \gerb.$$
Now $\lambda_{\gerb} = \lambda_{\mu\gerb'}$ implies that each $s(\tilde{\gerq})$ is even, so
$\mu\gerb' = k\gerb$ for some $k \in K$.  Thus $\gerb' = \gerb$ because they are already representatives for the class group.
\end{proof}

\begin{lem} \label{lem:ordersRa} Any superspecial order $R \supseteq \ok$ is isomorphic to some $R(\gera, \lambda)$.
\end{lem}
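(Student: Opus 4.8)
The plan is to reverse-engineer the data $(\gera,\lambda)$ directly from $R$, working in the matrix model $B_{p,L} = \{[\alpha,\beta]\}$ of the Lemma preceding Section~\ref{orders}, in which $\ok = \{[\alpha,0]\}$ and $\calD^{-1} = \calD_{K/L}^{-1}$ is the codifferent. First I would record two structural facts. Since $R\cap K$ is an order of $K$ containing the maximal order $\ok$, we have $R\cap K = \ok$; equivalently the projection $\psi\colon R\to K$, $[\alpha,\beta]\mapsto\beta$, has kernel exactly $\ok$. For the first coordinate, note that for $[\alpha,\beta]\in R$ and any $t\in\ok$ the product $[t,0][\alpha,\beta] = [t\alpha,t\beta]$ again lies in $R$, so its reduced trace $\Tr_{K/L}(t\alpha)$ lies in $\ol$; letting $t$ run over an $\ol$-basis of $\ok$ gives $\Tr_{K/L}(\alpha\ok)\subseteq\ol$, i.e.\ $\alpha\in\calD^{-1}$ by the very definition of the codifferent. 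Thus every element of $R$ already has the shape imposed by the construction in Section~\ref{orders}.

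Next I would extract the ideal and the connecting datum. The image $J:=\psi(R)$ is a finitely generated $\ol$-submodule of $K$ stable under multiplication by $\ok$ (because $R$ is a left $\ok$-module), hence a fractional $\ok$-ideal, and we obtain a short exact sequence $0\to\ok\to R\to J\to 0$ (the surjection being $\psi$). For $\beta\in J$ the first coordinate $\alpha$ with $[\alpha,\beta]\in R$ is well defined modulo $\ok$, and $\beta\mapsto(\alpha\bmod\ok)$ is an $\ok$-linear map $\phi\colon J\to\calD^{-1}/\ok$; since $J$ is invertible, $\phi$ is multiplication by some $\lambda'\in\calD^{-1}J^{-1}$, well defined modulo $J^{-1}$. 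To pin down $J$ I would compute the discriminant of $R$ by the device used in Proposition~\ref{Rorder}: the trace form makes the first and second coordinates orthogonal, so the overlattice $\{[\alpha,\beta]:\alpha\in\calD^{-1},\ \beta\in J\}$ has discriminant $(\alpha_0 p)^2\Norm_{K/L}(J)^2$, and passing to the sublattice cut out by the congruence multiplies this by $\Norm_{K/L}(\calD)^2$. Setting the result equal to $p^2$ (superspeciality) yields $\Norm_{K/L}(J\calD\calA)=\ol$. An ideal of relative norm $\ol$ has trivial inert and ramified parts and split part of the form $\gera^{-1}\overline{\gera}$, so $J=\calD^{-1}\calA^{-1}\gera^{-1}\overline{\gera}$ for a suitable fractional ideal $\gera$, exactly as in the construction.

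It remains to identify $\lambda'$ with an admissible real $\lambda$. Here I would run the closure computation of Proposition~\ref{Rorder}, Step 1, in reverse: for $[\alpha,\beta],[\alpha',\beta']\in R$ the first coordinate $\alpha\alpha'+\alpha_0 p\,\beta\overline{\beta'}$ of the product lies in $\calD^{-1}$, and localizing at a ramified prime $\tilde\gerq\mid\calD$ (choosing $\beta,\beta'$ that generate $J$ there) forces $\lambda'^2\equiv\alpha_0 p\pmod{\gerq}$. Because $\gerq\nmid 2\alpha_0 p$ and the residue field of $K_{\tilde\gerq}$ coincides with $\ol/\gerq$, this congruence takes place in $\ol/\gerq$ and gives $\lambda'\equiv\varepsilon(\gerq)\lambda_\gerq\pmod{\tilde\gerq}$ for a sign $\varepsilon(\gerq)\in\{\pm1\}$, where $\lambda_\gerq$ is the fixed square root of $\alpha_0 p$ from Definition~\ref{def:localsigns}. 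I would then choose, by the Chinese Remainder Theorem, an element $\lambda\in L$ with $(\lambda,d)=1$ realizing these signs (the integrality condition $\lambda\calA^{-1}\gera^{-1}\overline{\gera}\subseteq\ok$ being automatic, as $\lambda\equiv\lambda'\pmod{J^{-1}}$ and $\lambda'\in\calA\gera\overline{\gera}^{-1}=\calD^{-1}J^{-1}$). Since the connecting map is determined by $\lambda'\bmod J^{-1}$ and $J^{-1}$ localizes to $\tilde\gerq$ at each ramified prime, $\lambda$ and $\lambda'$ define the same $\phi$; hence $R=R(\gera,\lambda)$ on the nose, which is stronger than the asserted isomorphism.

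The main obstacle is the final paragraph: showing that the abstract connecting datum $\phi\in\Hom_{\ok}(J,\calD^{-1}/\ok)$ is forced by multiplicativity to be (congruent to) multiplication by an explicit square root $\lambda$ of $\alpha_0 p$ modulo $d$, i.e.\ running the order computation of Proposition~\ref{Rorder} backwards and verifying that the only residual freedom is precisely the ramified sign vector parameterizing the family $R(\gera,\lambda)$. The discriminant bookkeeping and the norm-$\ol$ ideal structure are the other points requiring care, but they become routine once the orthogonality of the trace form is exploited; the genuine content is that the $\ok$-bimodule structure of $R$ together with closure under multiplication leaves no room beyond this family, which is the converse of Lemmas~\ref{lem:equivorders}--\ref{lem:representatives}.
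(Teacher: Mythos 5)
Your proposal is essentially correct, but it takes a genuinely different route from the paper's proof. The paper argues local-globally: it localizes at every prime $\gerc$ of $\ol$, uses the fact that Eichler orders of the same level are locally conjugate to write $R_\gerc = \mu_\gerc^{-1}R(\calO,\lambda_\calO)_\gerc\mu_\gerc$, then invokes Vign\'eras' local theorems on optimal embeddings to correct $\mu_\gerc$ so that conjugation fixes the embedded $K_\gerc$ pointwise (hence $\mu_\gerc\in K_\gerc^\times$ by the centralizer argument), and finally glues the collection $(\mu_\gerc)$ into a global ideal $\gera$ with $R=R(\gera,\lambda_\gera)$ locally everywhere, hence globally. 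You instead reverse-engineer the data globally: the trace/codifferent argument puts the first coordinate in $\calD^{-1}$, the exact sequence $0\to\ok\to R\to J\to 0$ with connecting map multiplication by $\lambda'\in\calD^{-1}J^{-1}/J^{-1}$ packages the order, discriminant bookkeeping forces $\Norm_{K/L}(J\calD\calA)=\ol$ and hence $J=\calD^{-1}\calA^{-1}\gera^{-1}\overline{\gera}$, and multiplicativity at the ramified primes forces $\lambda'^2\equiv\alpha_0 p \pmod{\tilde\gerq}$, i.e., a sign vector. This is close in spirit to Dorman's original direct strategy, which the authors deliberately replaced (they cite gaps in his proofs); what your route buys is a self-contained, constructive argument that never leaves the matrix model and avoids the appeal to Vign\'eras; what the paper's route buys is brevity and robustness, since all the delicate global bookkeeping is replaced by standard local statements. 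Both arguments end with equality $R=R(\gera,\lambda)$, not merely isomorphism.

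One step of yours needs repair: the parenthetical claim that the integrality condition $\lambda\calA^{-1}\gera^{-1}\overline{\gera}\subseteq\ok$ is ``automatic, as $\lambda\equiv\lambda'\pmod{J^{-1}}$'' is circular. The congruence $\lambda\equiv\lambda'\pmod{J^{-1}}$ is exactly what you are trying to prove, and it imposes conditions at the primes dividing $\calA\gera\overline{\gera}^{-1}$ which the sign congruences at $\gerq\mid d$ alone do not supply. The fix is immediate and is what Definition~\ref{def:localsigns} already prescribes: impose \emph{both} the sign congruences and the integrality condition in the CRT choice of $\lambda$ (possible because $(\calA\gera\overline{\gera}^{-1},d)=1$, so the conditions live at disjoint primes), and only then verify $\lambda-\lambda'\in J^{-1}$ prime by prime --- at $\tilde\gerq\mid\calD$ from the matching signs, and elsewhere because both $\lambda$ and $\lambda'$ lie in $\calA\gera\overline{\gera}^{-1}$. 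It is worth noting that this integrality condition is precisely the one the authors say Dorman omitted, so it is the one place in this style of argument where care is genuinely required.
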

\begin{proof} Let $\gerc$ be a prime ideal of $L$.  For any ideal $\gera$ of $K_\gerc$,
define orders $R^\gerc(\gera, \lambda_\gera)$ of $(B_{p,L})_\gerc$ exactly the same way as
for $R(\gera, \lambda_\gera)$.
The orders have the same properties that were proved for the $R(\gera, \lambda_\gera)$ in Proposition~\ref{Rorder}: independent of the choice of $\lambda$, conductor $p\calO_{L_\gerc}$.

Then, for an ideal $\gera$ of $K$ we have
$R(\gera, \lambda_\gera)_\gerc = R^\gerc(\gera_\gerc, \lambda_{\gera_\gerc})$.
Let $R$ be an order of $B_{p,L}$ that contains $\ok$, of discriminant $p\ol$.
For every $\gerc$, the order $R_\gerc$ is an Eichler order of discriminant $p\calO_{L_\gerc}$, as is
the order $R(\calO, \lambda_\calO)_\gerc$, where $\calO$ represents the trivial ideal class.
For every $\gerc$ there is a $\mu_\gerc \in (B_{p,L})_\gerc^\times$ such that
$$R_\gerc = \mu_\gerc^{-1} R(\calO, \lambda_\calO)_\gerc \mu_\gerc,$$
because Eichler orders of the same discriminant are locally conjugate.
Furthermore, for almost all $\gerc$,
$$R_\gerc = \M_2(\calO_{L_\gerc}) \subseteq (B_{p,L})_\gerc = \M_2({L_\gerc}),$$
and the same holds for $R(\gera,\lambda_\gera)$.
Now it is enough to show that we can choose $\mu_\gerc \in K_\gerc^\times$ for all $\gerc$,
because in that case
$$R_\gerc = \mu_\gerc^{-1} R(\calO, \lambda_\calO)_\gerc \mu_\gerc
= R^\gerc((\mu_\gerc), \lambda_{(\mu_\gerc)}),$$
for a collection of elements
$$\left\{ \mu_\gerc : \gerc \lhd \ol \;{\rm prime } \;, \mu_\gerc = 1
\;{\rm for  } \;{\rm almost  } \;{\rm all } \; \gerc,
\mu_\gerc \in K_\gerc^\times \right\}.$$
Therefore, there is an ideal $\gera$ of $K$ such that, for all $\gerc$,
$\gera_\gerc = (\mu_\gerc)$.  The two orders
$R$ and $R(\gera, \lambda_\gera)$ are equal because they are equal locally everywhere, and we are done.

To show that we may choose $\mu_\gerc \in K_\gerc^\times$ for all $\gerc$,
we use~\cite[Theorems 3.1, 3.2, pp. 43-44]{Vigneras}, to produce an element
$\nu_\gerc$ such that

(i) $ \nu_\gerc^{-1}(\mu_\gerc^{-1} R(\calO, \lambda_\calO)_\gerc \mu_\gerc) \nu_\gerc
= \mu_\gerc^{-1} R(\calO, \lambda_\calO)_\gerc \mu_\gerc = R_\gerc $, 
and 

(ii) the embedding of $\calO_{K_\gerc}$ into $R_\gerc$ is the embedding of
$\calO_{K_\gerc}$ into $R(\calO, \lambda_\calO)_\gerc$ conjugated by $\nu_\gerc \mu_\gerc$.

\noindent
Since conjugation by $\nu_\gerc \mu_\gerc$ fixes $K_\gerc$ pointwise, this implies
$\nu_\gerc \mu_\gerc$ commutes with $K_\gerc$, and so
$\nu_\gerc \mu_\gerc \in K_\gerc^\times$.
\end{proof}

Our conclusion is that isomorphism classes of superspecial orders of $B_{p,L}$ in which $\ok$ embeds are the isomorphism classes of $R(\gera, \lambda_\gera)$.  Thus we have proved the following theorem:
\begin{thm} \label{classification} Fix an embedding of $K \hookrightarrow B_{p,L}$.  The isomorphism classes of the superspecial orders in which $\ok$ embeds are in bijection with the ideal class group of $K$ via the map $$ [\gera] \mapsto R(\gera, \lambda_\gera).$$
\end{thm}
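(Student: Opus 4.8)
The plan is to package the preceding lemmas into a single bijection; the theorem carries no new computational content of its own. First I would introduce the candidate map
$$\Psi : \Cl(K) \to \{\text{isomorphism classes of superspecial orders of } B_{p,L} \text{ containing } \ok\}, \qquad \Psi([\gera]) = [R(\gera, \lambda_\gera)],$$
and verify it is well defined. If $\gerb = \mu\gera$ with $\mu \in K^\times$, then the conjugation identity $\mu^{-1} R(\gera, \lambda_\gera) \mu = R(\mu\gera, \lambda_{\mu\gera})$ proved inside Lemma~\ref{lem:representatives} shows that $R(\gerb, \lambda_\gerb)$ and $R(\gera, \lambda_\gera)$ are conjugate by an element of $K^\times$, hence isomorphic as orders containing $\ok$ by the Skolem--Noether lemma at the head of this section. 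Thus $\Psi$ depends only on the ideal class $[\gera]$.

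For surjectivity I would invoke Lemma~\ref{lem:ordersRa}: every superspecial order $R \supseteq \ok$ is isomorphic to some $R(\gera, \lambda)$ for a possibly nonstandard choice of local signs. Lemma~\ref{lem:ramifiedideal} then lets me normalize those signs, since any such $R(\gera, \lambda)$ equals $R(\gera\gerc, \lambda_{\gera\gerc})$ for a suitable $\gerc \mid d$; hence $R$ is isomorphic to an order of the form $R(\gera', \lambda_{\gera'})$, i.e. to an order in the image of $\Psi$.

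For injectivity, suppose $R(\gera, \lambda_\gera) \sim R(\gerb, \lambda_\gerb)$. Using Lemma~\ref{lem:representatives} I would replace each side by the isomorphic order attached to the fixed representatives $\{\gerb_1, \dots, \gerb_h\}$ of $\Cl(K)$, obtaining $R(\gerb_i, \lambda_{\gerb_i}) \sim R(\gerb_j, \lambda_{\gerb_j})$ with $\gerb_i$ and $\gerb_j$ representing $[\gera]$ and $[\gerb]$ respectively. The last of the lemmas above (which itself rests on Lemma~\ref{lem:equivorders}) then forces $\gerb_i = \gerb_j$, whence $[\gera] = [\gerb]$.

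Since the substantive work---constructing the orders, computing their discriminant, proving independence of $\lambda$, the local Eichler-conjugacy argument, and the sign bookkeeping tied to $\val_{\tilde\gerq}(\gera) \bmod 2$---is already discharged in Section~\ref{orders} and the lemmas above, I expect no genuine obstacle at this stage. The only point I would treat with care is matching the relation ``$\sim$'' used in the injectivity lemma (conjugacy by $K^\times$) with the notion of isomorphism over $K$ supplied by the Skolem--Noether lemma; once those are identified the three steps close at once, and the cardinality $\#\Cl(K)$ of the set of isomorphism classes drops out as a corollary of the bijection.
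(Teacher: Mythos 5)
Your proposal is correct and matches the paper's own treatment: the paper likewise proves the theorem purely by assembling the preceding lemmas (Skolem--Noether to identify isomorphism over $K$ with conjugation by $K^\times$, Lemma~\ref{lem:ordersRa} for surjectivity, Lemma~\ref{lem:representatives} together with the final lemma on representatives for injectivity and well-definedness), with no additional argument at the theorem itself. Your extra normalization step via Lemma~\ref{lem:ramifiedideal} and your caution about matching ``$\sim$'' with isomorphism over $K$ are both consistent with, and faithful to, the paper's reasoning.
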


\begin{rmk} In the case $L=\QQ$, Theorem~\ref{classification} provides a different proof for the main theorems of Dorman's paper on global orders in definite quaternion algebras~\cite{DormanOrders} and corrects several errors and gaps in the proofs there.
For example, we correct the missing condition on the integrality for
$\lambda \calD^{-1}\calA^{-1} \gera^{-1}\overline{\gera} $ and the
resulting mistake in proof of Proposition 2, and we give a different proof of the
1-1 correspondence.
\end{rmk}

\section{Main theorems on counting formulas} \label{maincounting}

\subsection{Assumptions and notation.} \label{assumptions}
Let $L$ be a totally real field of degree $g$ of strict class number one, $p$ a rational prime which is unramified in $L$, and $K$ a primitive CM field with $K^+=L$.
Using the same notation as in Lemma~\ref{Lemma: CMdisc}, write the ring of integers of $K$, $\ok = \ol[t]$, where $t^2+at+b=0$ for some $a,b \in \ol$, and the different $\calD = \calD_{K/L} = ({\sqrt{d}})$, with $d=a^2-4b$ a totally negative element of $\ol$.

Assume as in Proposition~\ref{BpL} that all primes $\gerp \in S \setminus S_0$ split in $K$ and all primes $\gerp \in S_0$ are inert in $K$ and that the discriminant $\gerd_{K/L} = (d)$ satisfies $(d,2)=1$ and $(d,p)=1$.
Let $\alpha_0 \in \ol$ be a totally negative prime element such that $$B_{p,L} \cong \left(\frac{d, \, \alpha_0 p}{L}\right),$$
where $(\alpha_0,2pd)=1$, $\alpha_0 \equiv p \mod{\gerq}$ for each $\gerq \mid d$,
$\alpha_0 \equiv 1 \mod{p}$, and $\alpha_0 \ok = \calA \cdot \overline{\calA}$.

Let $\gera$ be an integral ideal of $\ok$.  For each $\gerq \mid d$, fix a solution $\lambda_\gerq$ to
\begin{equation} x^2 \equiv \alpha_0 p \mod \gerq.
\end{equation}
Let $\varepsilon(\gera, \gerq) \in \{ \pm 1 \}$ be a choice of sign $\forall \gerq \mid d$.
Let $\lambda \in \ol$ be such that
\begin{enumerate}
\item $\lambda \equiv \varepsilon(\gera, \gerq) \lambda_\gerq \mod \gerq$, $\forall \gerq \mid d$
\item $\lambda \calA^{-1} \gera^{-1}\overline{\gera}$ is an integral ideal of $\ok$.
\end{enumerate}
For $\ell \in \ol$ such that $(\ell, \alpha_0 d \gera^{-1}\overline{\gera})=1$, let
$$ R = R(\gera, \lambda, \ell) = \{ [\alpha, \beta] \mid \alpha \in \calD^{-1}, \beta \in \calD^{-1}\calA^{-1} \ell \gera^{-1}\overline{\gera},
\alpha \equiv \lambda \beta \mod \ok \}. $$

\subsection{Counting simultaneous embeddings}
Let $K'$  be another CM field, with $\calO_{K'}= \ol[w]$ and $$\disc_{K'/L} = (\Tr(w)^2 -4\Norm(w)) = (d')$$
generated by a totally negative element $d'$ of $L$.
Then, following Gross-Zagier~\cite{Gross Zagier}, we are interested in counting
$$ S(\gera, \lambda, \ell) = \left\{ [\alpha,\beta] =  \begin{pmatrix} \alpha & \beta \\ \alpha_0 p \betabar & \alphabar \end{pmatrix}  \in R(\gera, \lambda, \ell) :
\Tr[\alpha, \beta] = \Tr(w), \Norm[\alpha, \beta] = \Norm(w) \right\}.
$$
We follow Gross-Zagier very closely.
Let $[\alpha, \beta]$ be an element of this set.  Since
$$\ok = \ol + \ol \cdot \frac{a+\sqrt{d}}{2} \; = \; \left\{\frac{2l_1+l_2(a+\sqrt{d})}{2}: l_1, l_2 \in \ol \right\}
$$
$$ =  \left\{\frac{l_3+l_4\sqrt{d}}{2}: l_3, l_4 \in \ol, l_3 -al_4 \equiv 0 \mod{2\ol} \right\},
$$
we can write $\alpha \in \calD^{-1}$ in the form
$\alpha = \frac{l_3+l_4\sqrt{d}}{2\sqrt{d}}$, $l_3$, $l_4 \in \ol$, with
$l_3 -al_4 \equiv 0 \mod{2\ol}$, and in this notation, $\Tr(\alpha) = \Tr([\alpha,\beta])=l_4$.
So $$\alpha = \frac{x+\Tr(w)\sqrt{d}}{2\sqrt{d}}, \quad x \in \ol, \quad x -a\Tr(w) \equiv 0 \mod{2\ol}, $$ where $a = -\Tr(t)$,
 and $$\beta = \frac{\ell}{\sqrt{d}}\gamma, \quad \gamma \in \calA^{-1} \gera^{-1}\overline{\gera}.$$
Since
\begin{equation*}
\begin{split}
\Norm[\alpha,\beta] = \det [\alpha,\beta] & = \alpha \alphabar - \alpha_0 p \beta\betabar \\
 & = \frac{x+\Tr(w)\sqrt{d}}{2\sqrt{d}} \cdot \frac{x-\Tr(w)\sqrt{d}}{-2\sqrt{d}}
 - \alpha_0 p \frac{\ell^2}{-d} \gamma \overline{\gamma} \\
& = \frac{1}{-4d}[x^2-\Tr(w)^2d-4\alpha_0 p \ell^2 \gamma \overline{\gamma}],
\end{split}
\end{equation*}
 it follows that
 $$ -d[4\Norm(w) -\Tr(w)^2] = x^2 - 4\alpha_0 p \ell^2 \gamma \overline{\gamma}. $$
So an element $[\alpha,\beta]$ of the set $S(\gera, \lambda, \ell)$ gives rise to a solution
$(x, \gamma)$ to the equation
$$ d d' = x^2 - 4\alpha_0 p \ell^2 \gamma \overline{\gamma},$$
with $ \gamma \in \calA^{-1} \gera^{-1}\overline{\gera}$, and $ x \in \ol$, $ x \equiv a\Tr(w) \mod{2\ol}$,  where $x^2-dd'$ is a totally negative element of $\ol$ because
$\alpha_0$ is. Call this set of conditions on $x$ conditions {\bf C}.

Our analysis allows us to define a function
$\phi: S(\gera, \lambda, \ell) \rightarrow S_1(\gera, x, \ell)$
that sends $[\alpha,\beta] \mapsto \gamma$ (it is used in the proof of
Theorem~\ref{SS1S2} below), where the set $S_1(\gera, x, \ell)$ is defined for an integral ideal $\gera$ and $x$ satisfying conditions {\bf C} by 
$$ S_1(\gera,x, \ell) = \{\gamma \in \calA^{-1} \gera^{-1}\overline{\gera}: \Norm(\gamma) =
\gamma \overline{\gamma} = \frac{x^2-dd'}{4\alpha_0 p \ell^2 } \}.$$

For $\gamma \in \calA^{-1} \gera^{-1}\overline{\gera}$, the ideal generated by $\gamma$ can
be written as $(\gamma) = \calA^{-1} \gera^{-1}\overline{\gera} \cdot \gerb$, for $\gerb$ an
ideal of $\ok$, and $\Norm (\gerb) = \alpha_0 \Norm(\gamma)$.
We let $S_2(\gera,x, \ell)$ be the set
$$ S_2(\gera,x, \ell) = \{ \gerb \lhd \ok:
\Norm(\gerb) = \frac{x^2-dd'}{4 p \ell^2}, \gerb \sim \gera^2\calA \}.
$$
\begin{prop}  \label{S1S2} The map from $S_1(\gera,x, \ell) \rightarrow S_2(\gera,x, \ell)$ which sends
$\gamma  \mapsto \gerb_\gamma = (\gamma)\calA \gera \overline{\gera}^{-1}$ is
a surjective $[w_K:1]$-map, where $w_K$ equals the number of roots of unity in $K$.
\end{prop}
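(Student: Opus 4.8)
The plan is to verify three things in turn: that $\gamma \mapsto \gerb_\gamma$ actually lands in $S_2(\gera, x, \ell)$, that it is surjective, and that every fibre has exactly $w_K$ points. All three rest on the relative norm identities $\Norm_{K/L}(\calA) = (\alpha_0)$ (coming from $\alpha_0\ok = \calA\overline{\calA}$), $\Norm_{K/L}(\gera) = \Norm_{K/L}(\overline{\gera})$, and $\gera\overline{\gera} = \Norm_{K/L}(\gera)\ok$, together with the strict class number one hypothesis in the form $(\ol^\times)^+ = (\ol^\times)^2$. I will read all norm conditions at the level of ideals of $\ol$ (equivalently, up to totally positive units), which is the only sensible interpretation of the defining condition of $S_2$ since its right-hand side $(x^2-dd')/(4p\ell^2)$ is totally negative.

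For well-definedness, I would first note that $\gamma \in \calA^{-1}\gera^{-1}\overline{\gera}$ forces $(\gamma) \subseteq \calA^{-1}\gera^{-1}\overline{\gera}$, so $\gerb_\gamma = (\gamma)\calA\gera\overline{\gera}^{-1}$ is an integral ideal. Taking relative norms gives $\Norm_{K/L}(\gerb_\gamma)\ol = \Norm(\gamma)\cdot(\alpha_0)$, and since $\gamma \in S_1$ has $\Norm(\gamma) = (x^2-dd')/(4\alpha_0 p\ell^2)$ this is exactly the ideal $((x^2-dd')/(4p\ell^2))$ demanded in $S_2$. For the class, $(\gamma)$ is principal and $\gera\overline{\gera}$ is extended from $\ol$, hence principal since $h_L = 1$, so $[\gerb_\gamma] = [\calA][\gera][\overline{\gera}]^{-1} = [\calA][\gera]^2 = [\gera^2\calA]$, using $[\overline{\gera}] = [\gera]^{-1}$. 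Thus $\gerb_\gamma \in S_2(\gera, x, \ell)$.

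For surjectivity, given $\gerb \in S_2$ I would consider the ideal $\gerb\calA^{-1}\gera^{-1}\overline{\gera}$, whose class is $[\gera^2\calA][\calA]^{-1}[\gera]^{-1}[\overline{\gera}] = [\gera\overline{\gera}] = 1$, so it is principal, say $(\gamma)$ with $\gamma \in \calA^{-1}\gera^{-1}\overline{\gera}$; by construction $\gerb_\gamma = \gerb$. Comparing norms shows $\gamma\overline{\gamma}$ and the target value $(x^2-dd')/(4\alpha_0 p\ell^2)$ generate the same ideal of $\ol$ and are both totally positive (the latter because $x^2-dd'$ and $\alpha_0$ are both totally negative), hence differ by a totally positive unit. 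Here I invoke $(\ol^\times)^+ = (\ol^\times)^2$ to write that unit as $\epsilon^2$; replacing $\gamma$ by $\gamma/\epsilon$ (which alters neither $(\gamma)$ nor $\gerb_\gamma$, as $\epsilon\in\ol^\times$) makes $\Norm(\gamma) = (x^2-dd')/(4\alpha_0 p\ell^2)$ an exact equality, so $\gamma \in S_1$ and $\gamma \mapsto \gerb$.

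Finally, for the fibre count I would observe that $\gerb_\gamma = \gerb_{\gamma'}$ forces $(\gamma) = (\gamma')$, hence $\gamma' = u\gamma$ for a unit $u \in \ok^\times$, and the common norm value then gives $\Norm(u) = u\overline{u} = 1$. The crux is to identify these norm-one units with the roots of unity $\mu_K$: any $u \in \ok$ with $u\overline{u} = 1$ satisfies $|\psi(u)| = 1$ for every embedding $\psi\colon K \hookrightarrow \CC$, since complex conjugation on the CM field $K$ matches complex conjugation on $\CC$, so by Kronecker's theorem $u$ is a root of unity; conversely each root of unity has norm one. Each fibre is therefore the free $\mu_K$-orbit $\{u\gamma : u \in \mu_K\}$ (free because $\gamma \neq 0$, as $\Norm(\gamma) \neq 0$), of size $w_K$, and each $u\gamma$ genuinely lies in $S_1$. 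I expect this last identification — pinning the units of relative norm one to be exactly the roots of unity — to be the one point needing real care, whereas the norm bookkeeping and the principality arguments are routine given the setup of the previous sections.
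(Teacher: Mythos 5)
Your proof is correct and follows essentially the same route as the paper's: surjectivity by taking a generator of $\gerb\calA^{-1}\gera^{-1}\overline{\gera}$ and rescaling by a square root of a totally positive unit (using $\ol^{\times,+}=\ol^{\times,2}$), and the $[w_K\colon 1]$ count by reducing $\gerb_\gamma=\gerb_\delta$ to $(\gamma)=(\delta)$ and norm-one units. The extra details you supply — the explicit well-definedness/class computation, the principality of $\gerb\calA^{-1}\gera^{-1}\overline{\gera}$, and the Kronecker-theorem argument that relative-norm-one units of a CM field are roots of unity — are points the paper asserts or leaves implicit, not a different method.
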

\begin{proof}  To show that the map is $[w_K:1]$, we first show that
$\gerb_\gamma = \gerb_\delta \iff \gamma = \mu \delta$, where $\mu$ is a root of unity in $K$.
Since $\gerb_\gamma$ depends only on $(\gamma)$, the direction $\Leftarrow$ is clear.
Now if $\gerb_\gamma = \gerb_\delta$, then $(\gamma)=(\delta)$, so $\gamma = \mu\delta$
for some $\mu \in \ok^\times$, but also $\Norm(\gamma)=\Norm(\delta) = \Norm(\mu)\cdot \Norm(\gamma)
\Rightarrow \Norm(\mu) =1 \Rightarrow \mu \in \mu_K$.

Next we show that the map is surjective.  Given $\gerb \in S_2(\gera,x, \ell)$, let $\gamma$ be
a generator of $\calA^{-1} \gera^{-1}\overline{\gera}\gerb$.  Then $\gamma \in
\calA^{-1} \gera^{-1}\overline{\gera}$ and
$$(\Norm(\gamma)) = (\gamma \overline{\gamma}) = (\frac{x^2-dd'}{4\alpha_0 p \ell^2}).$$
Therefore,
there exists a totally positive unit $\epsilon' \in \ol^{\times +} = \ol^{\times 2},$
$\epsilon' = \epsilon^2,$ such that
$$\epsilon' \gamma \overline{\gamma} = \frac{x^2-dd'}{4\alpha_0 p \ell^2}.$$
Changing $\gamma$ to $\epsilon \gamma$,
$$ \gamma \overline{\gamma} = \frac{x^2-dd'}{4\alpha_0 p \ell^2}.$$
So $\gamma \in S_1(\gera,x, \ell)$, and since it is still true that
$(\gamma)=\calA^{-1} \gera^{-1}\overline{\gera}\gerb$, it follows that $\gerb_\gamma = \gerb$.
\end{proof}

Now given an element $\gamma$ of $S_1(\gera,x, \ell)$, we can construct elements of $S(\gera, \lambda, \ell)$ as follows:
Let $$\alpha = \frac{x+\Tr(w)\sqrt{d}}{2\sqrt{d}}, \quad \beta = \frac{\ell}{\sqrt{d}}\gamma.$$

First, we note that $\alpha \in \calD^{-1} \iff \frac{x+\Tr(w)\sqrt{d}}{2} \in \ok
\iff  x \in \ol, \quad x \equiv a\Tr(w) \mod{2\ol}$, which holds because $x$ satisfies conditions
{\bf C}.

Next, note that $\beta = \frac{\ell}{\sqrt{d}}\gamma \in  \calD^{-1} \calA^{-1} \ell \gera^{-1}\overline{\gera}
\iff  \gamma \in \calA^{-1} \gera^{-1}\overline{\gera}$, which holds by the definition of the set $S_1(\gera,x)$.

It remains to check that the congruence $\alpha \equiv \lambda \beta \mod{\ok}$ is satisfied.
Since $\gamma \in S_1(\gera,x, \ell)$,
$$  x^2 - 4\alpha_0 p \ell^2 \gamma \overline{\gamma} = d d' \equiv 0 \mod{d}.
$$
Next, the congruence $\lambda^2 \equiv \alpha_0 p \mod{d}$ implies that
$$  x^2 - 4\alpha_0 p \ell^2 \gamma \overline{\gamma} +
4 \ell^2 \gamma \overline{\gamma}(\alpha_0 p - \lambda^2) \equiv 0 \mod{d},
$$
and so
$$  x^2 - 4\lambda^2 \ell^2 \gamma \overline{\gamma} \equiv 0 \mod{d}.
$$
Therefore,
$$(x+\Tr(w)\sqrt{d})(x-\Tr(w)\sqrt{d}) - 4\lambda^2 \ell^2 \gamma \overline{\gamma} \equiv 0 \mod{d}.
$$
Using $x+\Tr(w)\sqrt{d} = 2\sqrt{d}\alpha$ and $\ell\gamma = \sqrt{d}\beta$, we get
$$-4d(\alpha \alphabar - \lambda^2 \beta \betabar) \equiv 0 \mod{d}.
$$
Since $(d,2) =1$ it follows that $\alpha \alphabar \equiv \lambda^2 \beta \betabar \mod{\ok}$.
Now, $\alpha$ and $\lambda \beta$ belong to $\calD^{-1} = \frac{1}{\sqrt{d}}\ok$ and hence
$$ \alpha_1 := \sqrt{d}\alpha, \quad \beta_1 := \sqrt{d} \lambda \beta$$
are in $\ok$ and we have
$ \alpha_1 \alphabar_1 \equiv \beta_1 \betabar_1 \mod{d}.$
Equivalently, this relation holds modulo all ideals $\gerq$ of $\ol$ dividing $d$:
$$ (*) \quad \quad \alpha_1 \alphabar_1 \equiv \beta_1 \betabar_1 \mod{\gerq},
\quad \forall \gerq \mid d, \quad \gerq \lhd \ol.$$
Let $\tilde{\gerq} \lhd \ok$ be a prime such that $ \gerq \ok = \tilde{\gerq}^2$.
Then $\ok/\tilde{\gerq} \cong \ol /\gerq$, and complex conjugation hence acts trivially $\mod{\tilde{\gerq}}$.
So (*) is equivalent to
$$ \alpha_1^2  \equiv \beta_1^2  \mod{\tilde{\gerq}},
\quad \forall \tilde{\gerq} \mid d\ok, \quad \tilde{\gerq} \lhd \ok,$$
which is equivalent to
$$ \alpha_1  \equiv \pm \beta_1  \mod{\tilde{\gerq}},
\quad \forall \tilde{\gerq} \mid d\ok, \quad \tilde{\gerq} \lhd \ok.$$
So this shows that there exists a choice of signs $\varepsilon(\gera, \gerq)$, and a
$\lambda$ depending on this choice, for which the congruence condition is satisfied,
and $[\alpha, \beta] \in S(\gera,\lambda, \ell)$.
However, for any ideal $\gerq$ for which $x \equiv 0 \mod{\gerq}$, both signs will work.
This motivates the following definitions and theorem.

\begin{dfn} \label{def:signs}
\begin{enumerate}
\item For $x \in \ol$, let $\delta(x) = 2^{\#\{ \gerq \mid d :x \equiv 0 \mod{\gerq}\}}$.
\item Call $\varepsilon(\gera)$ a vector of signs $\{ \varepsilon(\gera, \gerq) \}$ and let $\lambda_{\varepsilon(\gera)} \in \ol$ be an element such that
\begin{enumerate}
\item $\lambda_{\varepsilon(\gera)} \equiv \varepsilon(\gera, \gerq) \lambda_\gerq \mod \gerq$, $\forall \gerq \mid d$
\item $\lambda_{\varepsilon(\gera)} \calA^{-1} \gera^{-1}\overline{\gera}$ is an integral ideal of $\ok$.
\end{enumerate}
For any such $\lambda_{\varepsilon(\gera)}$ we have associated orders $R(\gera, \lambda_{\varepsilon(\gera)}, \ell)$.
\item Let $\tau= \#\{ \gerq \mid d \}$.
\end{enumerate}
\end{dfn}
For clarity, we also repeat previous definitions.
\begin{dfn} We say that $x \in \ol$ satisfies {\bf C} if
 $x \equiv a\Tr(w) \mod{2\ol}$, $\frac{x^2-dd'}{4 p \ell^2} \in \ol$, and $x^2-dd'$ is totally
 negative.
\end{dfn}
\begin{dfn}
For $\gera \lhd \ok$, let $\lambda_\gera = \lambda_{\varepsilon(\gera)},$
where $\varepsilon(\gera, \gerq) = (-1)^{\val_{\tilde{\gerq}}(\gera)}$.
\end{dfn}
% \substack{x \in \ol \\  x \equiv a\Tr(w) \mod{2\ol} \\ \frac{x^2-dd'}{4 p \ell^2} \in \ol}

\begin{thm} \label{SS1S2}
\begin{equation*} \tag{1}
\sum_{\varepsilon(\gera)} \# S(\gera, \lambda_{\varepsilon(\gera)}, \ell)
=
\sum_{x \, {\rm satisfies } \, {\bf C}} \delta(x) \cdot \#S_1(\gera, x, \ell)
= w_K\sum_{x \, {\rm satisfies } \, {\bf C}} \delta(x) \cdot \#S_2(\gera, x, \ell).
\end{equation*}
Furthermore,
\begin{equation*} \tag{2}
\sum_{\varepsilon(\gera)} \# S(\gera, \lambda_{\varepsilon(\gera)}, \ell)
= \sum_{\substack{\gerc \mid d \\ \gerc \lhd \ok}} \# S(\gera\gerc, \lambda_{\gera\gerc}, \ell). 
\end{equation*}

\end{thm}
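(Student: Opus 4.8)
The plan is to establish the three quantities in (1) as equal by proving the two equalities separately, and then to read off (2) from the correspondence between sign vectors and ramified ideal divisors already encoded in Lemma~\ref{lem:ramifiedideal}. The first equality is a fibered counting argument, the second is a direct application of Proposition~\ref{S1S2}, and (2) follows from the elementary fact that conjugation fixes the ramified primes.

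First I would prove the leftmost equality in (1). The discussion preceding the theorem attaches to each $[\alpha,\beta]\in S(\gera,\lambda_{\varepsilon(\gera)},\ell)$ an element $x\in\ol$ satisfying the conditions {\bf C} and an element $\gamma\in S_1(\gera,x,\ell)$, via $\alpha=\frac{x+\Tr(w)\sqrt d}{2\sqrt d}$ and $\beta=\frac{\ell}{\sqrt d}\gamma$ (so that $x=\sqrt d(\alpha-\alphabar)$ and $\gamma=\frac{\sqrt d}{\ell}\beta$), and this assignment is reversible once the sign vector is forgotten. I would therefore count the set of pairs $\{(\varepsilon(\gera),[\alpha,\beta]):[\alpha,\beta]\in S(\gera,\lambda_{\varepsilon(\gera)},\ell)\}$, whose cardinality is the left-hand side, by fibering over $(x,\gamma)$. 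The crux is to show that the fiber over a fixed admissible $(x,\gamma)$ has exactly $\delta(x)$ elements, that is, that exactly $\delta(x)$ sign vectors $\varepsilon(\gera)$ make $\alpha\equiv\lambda_{\varepsilon(\gera)}\beta\pmod{\ok}$ hold. This is purely local at each ramified prime: with $\gerq\ok=\tilde{\gerq}^2$, multiplying the congruence by $\sqrt d$ and reducing modulo $\tilde{\gerq}$ gives $\frac{x}{2}\equiv\varepsilon(\gera,\gerq)\lambda_\gerq\ell\gamma\pmod{\tilde{\gerq}}$, while the relation $\gamma\in S_1(\gera,x,\ell)$ forces $\bigl(\frac{x}{2}\bigr)^2\equiv(\lambda_\gerq\ell\gamma)^2\pmod{\tilde{\gerq}}$. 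Since the residue characteristic is odd, exactly one sign $\varepsilon(\gera,\gerq)$ works when $\lambda_\gerq\ell\gamma\not\equiv 0$, whereas both signs work precisely when $\gamma\equiv 0\pmod{\tilde{\gerq}}$, equivalently $x\equiv 0\pmod{\gerq}$ (this is where $(\alpha_0 p,d)=1$ and $(\ell,d)=1$ enter, to guarantee $\lambda_\gerq\ell\not\equiv 0$). Taking the product over $\gerq\mid d$ yields the factor $\delta(x)=2^{\#\{\gerq\mid d:\,x\equiv 0\bmod\gerq\}}$, and regrouping the fibered count by $x$ gives $\sum_x\delta(x)\,\#S_1(\gera,x,\ell)$.

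The rightmost equality in (1) is immediate from Proposition~\ref{S1S2}: for each $x$ the map $\gamma\mapsto\gerb_\gamma$ is a surjective $[w_K:1]$ map, so $\#S_1(\gera,x,\ell)=w_K\,\#S_2(\gera,x,\ell)$; multiplying by $\delta(x)$ and summing over $x$ satisfying {\bf C} gives the claim. For (2) I would use that conjugation fixes each ramified prime, so for any $\gerc$ dividing the different one has $\overline{\gerc}=\gerc$ and hence $(\gera\gerc)^{-1}\overline{\gera\gerc}=\gera^{-1}\overline{\gera}$. Consequently the order $R(\gera\gerc,\lambda_{\gera\gerc},\ell)$ has exactly the same $\alpha$- and $\beta$-lattices as the orders $R(\gera,\,\cdot\,,\ell)$, and the only datum that changes is the sign vector of $\lambda_{\gera\gerc}$, which by the computation in Lemma~\ref{lem:ramifiedideal} equals $\varepsilon(\gera,\gerq)\cdot(-1)^{\val_{\tilde{\gerq}}(\gerc)}$; thus $R(\gera\gerc,\lambda_{\gera\gerc},\ell)=R(\gera,\lambda_{\varepsilon(\gera\gerc)},\ell)$ as orders, and since the sets $S$ only record elements of prescribed trace $\Tr(w)$ and norm $\Norm(w)$, one has $S(\gera\gerc,\lambda_{\gera\gerc},\ell)=S(\gera,\lambda_{\varepsilon(\gera\gerc)},\ell)$. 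By Lemma~\ref{lem:ramifiedideal}, as $\gerc$ ranges over the divisors of $d$ occurring in the sum, the sign vector $\varepsilon(\gera\gerc)$ runs over all sign vectors exactly once, so reindexing the right-hand side of (2) by $\varepsilon(\gera)$ reproduces the common value in (1).

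The main obstacle is the fiber-size count underlying the first equality: verifying that the number of sign vectors compatible with a fixed $(x,\gamma)$ is exactly $\delta(x)$, and in particular that the exceptional primes, at which both signs are admissible, are precisely the $\gerq\mid d$ with $x\equiv 0\pmod{\gerq}$, a condition on $x$ alone and independent of $\gamma$. Everything else reduces to Proposition~\ref{S1S2} and Lemma~\ref{lem:ramifiedideal} together with the observation $\overline{\gerc}=\gerc$ for ramified $\gerc$.
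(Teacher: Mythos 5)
Your proposal is correct and follows essentially the same route as the paper: the first equality via the $[\delta(x):1]$ fibered count of sign vectors over pairs $(x,\gamma)$, analyzed locally at each ramified prime $\tilde{\gerq}$ (your square-relation argument $(\tfrac{x}{2})^2\equiv(\lambda_\gerq\ell\gamma)^2\pmod{\tilde{\gerq}}$ is just a more explicit form of the paper's $\mu(x,\gamma)$ bookkeeping, including the key observation that both signs work exactly when $x\equiv 0\pmod{\gerq}$); the second equality directly from Proposition~\ref{S1S2}; and (2) by the sign-twisting argument of Lemma~\ref{lem:ramifiedideal} together with $\overline{\gerc}=\gerc$ for ramified $\gerc$.
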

\begin{proof}
To avoid confusion, we remark that in (1), the first summation is a sum over $2^\tau$ elements, one of them being $\# S(\gera, \lambda_{\gera}, \ell)$.
The second equality of (1) follows from Proposition~\ref{S1S2}.
To prove the first equality in (1), we refer to the construction given above of the map $\phi$ 
$S(\gera, \lambda, \ell) \rightarrow S_1(\gera, x, \ell)$.  It can be extended to a map from
$$ \phi: \coprod_{\varepsilon(\gera)} S(\gera, \lambda_{\varepsilon(\gera)}, \ell) \rightarrow
 \coprod_{x \, {\rm satisfies } \, {\bf C}} S_1(\gera, x, \ell).$$
 We claim that $\phi$ is a surjective map which is $[\delta(x):1]$.
Given an element $\gamma$ of $S_1(\gera,x, \ell)$, we constructed above,
for some possible choice of signs $\epsilon(\gera)$ determining $\lambda$, an element of $S(\gera, \lambda, \ell)$,
 $$\alpha = \frac{x+\Tr(w)\sqrt{d}}{2\sqrt{d}}, \quad \beta = \frac{\ell}{\sqrt{d}}\gamma.$$
For any ideal $\tilde{\gerq} \mid d$, let $\mu(x,\gamma) \in \{ \pm 1\}$ be such that
$\alpha_1 \equiv  \mu(x,\gamma) \beta_1 \mod{\tilde{\gerq}}$, where
$\alpha_1 = \sqrt{d}\alpha$, $\beta_1 = \sqrt{d} \lambda \beta$.
Given $\varepsilon(\gera)$, we have
\begin{multline*} 
\alpha \equiv \lambda_{\varepsilon(\gera)} \beta \mod{\ok} \iff \\
\forall \tilde{\gerq} \mid d,
\text{ either } \alpha_1  \equiv \beta_1 \equiv 0 \mod{\tilde{\gerq}}
\text{ or } \beta_1 \not\equiv 0 \mod{\tilde{\gerq}}
\text{ and } \varepsilon(\gera, \gerq) \equiv \mu(x,\gamma) \mod{\tilde{\gerq}}.
\end{multline*} 
%
% I changed this from \lambda_{\varepsilon(\gera)} to \varepsilon(\gera).
It follows that for a given $(x,\gamma)$, the number of sign vectors $\varepsilon(\gera)$
such that $\alpha \equiv \lambda_{\varepsilon(\gera)} \beta \mod{\ok}$ is equal to
$$ 2^{\#\{ \tilde{\gerq} \mid d : \sqrt{d}\alpha  \equiv 0 \mod{\tilde{\gerq}}\}}.$$
Now since $\val_{\tilde{\gerq}}(\sqrt{d}\alpha) = \val_{\tilde{\gerq}}(x+\Tr(w)\sqrt{d})
\ge \min\{\val_{\tilde{\gerq}}(x), \val_{\tilde{\gerq}}(\Tr(w)\sqrt{d})\}$, it follows that
$$\val_{\tilde{\gerq}}(\sqrt{d}\alpha) > 0 \iff \val_{\tilde{\gerq}}(x) > 0 \iff
\val_{\tilde{\gerq}}(x) > 0.$$ So the number of sign vectors $\varepsilon(\gera)$
such that $\alpha \equiv \lambda_{\varepsilon(\gera)} \beta \mod{\ok}$ is equal to
$2^{\#\{ \gerq \mid d :x \equiv 0 \mod{\gerq}\}}$.

\noindent
The second assertion in the theorem follows from the same argument given in 
the proof of Lemma~\ref{lem:ramifiedideal}.
\end{proof}

%Let $K_1$ and $K_2$ be CM fields such that $K_1^+ =K_2^+=L$.  Assume $L$ has strict class number $1$
%and $p$ is an odd prime unramified in $L$ and $K_1$, and that $K_1$ satisfies the condition stated in
%Section~\ref{assumptions}.

%\begin{dfn} A pair of embeddings $(\iota_1, \iota_2)$, $\iota_j:K_j \rightarrow B_{p,L}$
%is optimal if there exists a superspecial order $R$ of $B_{p,L}$ such that
%$\iota_j(\calO_{K_j}) \subseteq R$.
%\end{dfn}

%Fix an embedding of $K$ into $B_[p,L}$ which extends the embedding of $L$ into $B_[p,L}$.

%\begin{thm} \label{count} The number of optimal pairs $(\iota_1, \iota_2)$ up to
%conjugation by $B_{p,L}^\times$ is

%$$  \sum_{\gera \lhd \Cl(K_1)} \sum_{\gerc \mid d_1} \# S(\gera\gerc, \lambda_{\gera\gerc})
%= w_{K_1} \sum_{\gera \lhd \Cl(K_1)}
%\sum_{x \, {\rm satisfies } \, {\bf C}} \delta(x) \cdot \#S_2(\gera, x).
%$$
%\end{thm}

%\subsection{Connection with geometry} \label{geometry}

\section{Endomorphism rings of abelian surfaces with complex multiplication} \label{sec:endring}
Let $K$ be a primitive CM field of degree $4$ over the rational numbers.
Let $W = W(\fpbar)$ be the Witt ring and let $$(A, \iota:\ok \arr \End_W(A))$$ be an abelian scheme over $W$ of relative dimension $2$, such that $A\pmod{p}$ is superspecial. Assume further that $p$ is unramified in $K$. Then, $R:= \End_{\ol}(A \pmod{p})$ is a superspecial order of the quaternion algebra $B_{p, L}$~\cite[Prop 4.1]{NicoleJNT}.

\begin{thm} \label{thm: endomorphisms mod p to the n} One has
\[ \End_{\ol, W/(p^n)}(A \pmod{p^n}) = \ok + p^{n-1}R.\]
\end{thm}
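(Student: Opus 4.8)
The plan is to decide which endomorphisms of $\bar A := A \pmod p$ survive to $A\pmod{p^n}$ by crystalline deformation theory, and then to read off the resulting order from the explicit superspecial Dieudonn\'e module produced in the proof of Proposition~\ref{superspecialsplitting}. First I would record the elementary inclusions. Reduction of endomorphisms is injective for abelian schemes, so the reduction maps realize $\End_{\ol,W}(A)$ and each $\End_{\ol, W/(p^n)}(A\pmod{p^n})$ as $\ol$-orders inside $R = \End_\ol(\bar A)$; since $K$ is primitive, $\End_{\ol,W}(A)=\ok$, giving a tower $\ok \subseteq R^{(n)} := \End_{\ol, W/(p^n)}(A\pmod{p^n}) \subseteq R$, and the claim is $R^{(n)} = \ok + p^{n-1}R$. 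The point of isolating the inclusion $p^{n-1}R \subseteq R^{(n)}$ early is that it forces $R/R^{(n)}$ to be killed by $p^{n-1}$; hence $R^{(n)}\otimes\mathbb{Z}_\ell = R\otimes\mathbb{Z}_\ell = (\ok+p^{n-1}R)\otimes\mathbb{Z}_\ell$ for every $\ell\neq p$, and it remains only to prove the equality after $\otimes\mathbb{Z}_p$, i.e.\ to determine which elements of $R\otimes\mathbb{Z}_p \cong \End_\ol(\bar A[p^\infty])$ lift.

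Next I would invoke Serre--Tate together with Grothendieck--Messing. As $p$ is odd, $W/(p^n)\twoheadrightarrow \fpbar$ carries a PD structure, so Serre--Tate reduces lifting an endomorphism of $\bar A$ to $A\pmod{p^n}$ to lifting the corresponding endomorphism of the $p$-divisible group $\bar A[p^\infty]$ compatibly with the deformation $A\pmod{p^n}$, and Grothendieck--Messing translates the latter into preservation of the Hodge filtration. Writing $D = \mathbb{D}(\bar A)$ for the contravariant Dieudonn\'e module over $W = W(\fpbar)$ and $\mathrm{Fil}\subseteq D$ for the rank-$g$ direct summand cut out by the CM lift $A$, an endomorphism $\phi\in R\otimes\mathbb{Z}_p$ (acting $W$-linearly on $D$, commuting with $F$, $V$ and with $\ol$) lifts to $A\pmod{p^n}$ if and only if $\phi(\mathrm{Fil})\subseteq \mathrm{Fil} + p^nD$. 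Because $A$ has CM by $\ok$, the subring $\ok\otimes\mathbb{Z}_p$ preserves $\mathrm{Fil}$ exactly, so it satisfies this condition for every $n$.

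The heart of the argument is the local computation of $R^{(n)}_\gerp := \{\phi\in R_\gerp : \phi(\mathrm{Fil})\subseteq \mathrm{Fil} + p^nD(\gerp)\}$ at each $\gerp\mid p$, using the explicit module from Proposition~\ref{superspecialsplitting}. Decomposing $D = \oplus_\gerp D(\gerp)$ and further into the lines on which $\ok$ acts through its various embeddings, $F$ cycles through these lines and, by superspeciality, picks up a single factor of $p$ on the forced wrap-around step. Concretely I expect to verify the clean structural identity $R_\gerp\cdot \mathrm{Fil} = \mathrm{Fil} + pD(\gerp)$ together with $(\ok)_\gerp = \{\phi : \phi(\mathrm{Fil})\subseteq \mathrm{Fil}\}$, and to split $R_\gerp = (\ok)_\gerp \oplus \mathcal{M}_\gerp$ so that $\mathcal{M}_\gerp$ carries $\mathrm{Fil}$ isomorphically onto $(\mathrm{Fil}+pD(\gerp))/\mathrm{Fil}$. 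Granting this, a decomposition $\phi = \phi_0 + \phi_1$ with $\phi_0\in (\ok)_\gerp$, $\phi_1\in\mathcal{M}_\gerp$ lies in $R^{(n)}_\gerp$ iff $\phi_1(\mathrm{Fil})\subseteq \mathrm{Fil}+p^nD(\gerp)$ iff $\phi_1\in p^{n-1}\mathcal{M}_\gerp$, whence $R^{(n)}_\gerp = (\ok)_\gerp + p^{n-1}R_\gerp$. I would run this in the two cases separately --- $\gerp\in S_0$ (inert in $K$, where $R_\gerp$ is the maximal order of the local division algebra and $(\ok)_\gerp$ its unramified quadratic subring) and $\gerp\in S\setminus S_0$ (split in $K$, where $R_\gerp$ is an Eichler order of level $\gerp$ in $M_2(L_\gerp)$ and $(\ok)_\gerp$ its diagonal) --- but the mechanism is the same. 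Reassembling the local data gives $R^{(n)}\otimes\mathbb{Z}_p = \ok\otimes\mathbb{Z}_p + p^{n-1}(R\otimes\mathbb{Z}_p)$, which combined with the away-from-$p$ remark yields the theorem.

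I expect the main obstacle to be the bookkeeping in this local Dieudonn\'e computation: fixing the Grothendieck--Messing criterion in the correct contravariant normalization, tracking the $\sigma$-semilinearity of $F$ and the precise placement of the factor of $p$ relative to $\mathrm{Fil}$, and establishing $R_\gerp\cdot\mathrm{Fil} = \mathrm{Fil}+pD(\gerp)$ in the split case where $R_\gerp$ is non-maximal. The remaining ingredients --- injectivity of reduction on endomorphisms, the equality $\End_{\ol,W}(A)=\ok$, and the Serre--Tate and Grothendieck--Messing equivalences --- are standard and I would cite them rather than reprove them.
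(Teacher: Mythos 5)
Your proposal is correct and in essence follows the paper's own strategy: reduce to a statement after $\otimes\,\ZZ_p$, use crystalline deformation theory to identify the liftable endomorphisms as those preserving the Hodge filtration of the CM lift, and then compute explicitly in the superspecial Dieudonn\'e module, where $\ok\otimes\ZZ_p$ is the exact stabilizer of the filtration and the complementary piece of $R\otimes\ZZ_p$ moves it by exactly one factor of $p$. The differences are in execution, and each buys something. The paper lifts one step at a time along $W/(p^n)\to W/(p^{n+1})$ and therefore must prove, at each stage, uniqueness of the $\ok$-stable direct summand $M$ lifting $\underline{\omega}$; your one-shot application of Grothendieck--Messing to $W/(p^n)\twoheadrightarrow\fpbar$, with $\mathrm{Fil}$ taken from the fixed lift $A/W$, bypasses that uniqueness lemma. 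The paper handles the primes $\ell\neq p$ by quoting the a priori fact that $\End_{\ol}(A\pmod{p^n})$ has $p$-power index in $R$ (\cite[Proposition 6.1]{GLdenominators}); your inclusion $p^{n-1}R\subseteq R^{(n)}$ makes this self-contained. One caution there: that inclusion is not elementary and should not be ``isolated early'' as if it preceded the deformation theory --- it follows from the Grothendieck--Messing criterion together with the observation that every $\phi\in\End_{\ol}(D)$ commutes with $V$ and hence satisfies $\phi(\mathrm{Fil})\subseteq VD=\mathrm{Fil}+pD$, so the criterion must come first and the inclusion second. Finally, where you organize the local computation prime-by-prime via the splitting $R_\gerp=(\ok)_\gerp\oplus\calM_\gerp$ (inert versus split cases), the paper instead invokes the $g=2$ classification of the possible Dieudonn\'e modules (citing the tables of \cite{GLdenominators} and \cite{Yu}) and writes out two explicit matrix cases; the verifications you defer --- the identity $R_\gerp\cdot\mathrm{Fil}=\mathrm{Fil}+pD(\gerp)$, the stabilizer statement, and the step ``$\phi_1(\mathrm{Fil})\subseteq\mathrm{Fil}+p^nD(\gerp)$ iff $\phi_1\in p^{n-1}\calM_\gerp$'' --- are precisely the content of those matrix computations (diagonal part equals $\calO_{K,\gerp}$, off-diagonal part picks up the factor of $p$), so your outline has no essential gap, and it has the mild advantage of being phrased so as to generalize beyond $g=2$ provided the local structure of $D(\gerp)$ is established as in Proposition~\ref{superspecialsplitting}.
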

This theorem is a generalization of a theorem of B. Gross that deals with the case of elliptic curves \cite{Gross}, but our method of proof is different; it is based on crystalline deformation theory.

\

\id Consider $A\pmod{p^n}$. We have an identification:
\[ \HH^1_{dR}(A \pmod{p^n}) \cong H^1_{Crys}(A\pmod{p}/W) \otimes W/(p^n).\]
Using that $W/(p^{n+1}) \arr W/(p^n)$ has canonical divided power structure, we conclude that the deformations of $A \pmod{p^n}$ to an abelian scheme $B$ over $W/(p^{n+1})$ are in functorial correspondence with direct summands of $
H^1_{Crys}(A\pmod{p}/W) \otimes W/(p^{n+1})$ such that the following diagram commutes.
\[\xymatrix{M\ar[d]^{\mod p^{n}} & \subseteq & H^1_{Crys}(A\pmod{p}/W) \otimes W/(p^{n+1})\ar[d]^{\mod p^{n}}\\
\underline{\omega}_{A\pmod{p^n}} & \subseteq & H^1_{Crys}(A\pmod{p}/W) \otimes W/(p^{n})
},
\]
where $\underline{\omega}_{A\pmod{p^n}} $ are the relative differentials at the origin of $A\pmod{p^n}$.

We shall show that there exists a unique such $B$ to which the $\ok$-action extends, namely, a unique $M$ fixed under the $\ok$ action on $H^1_{Crys}(A\pmod{p}/W)$. We may conclude then that for that $M$ there is an isomorphism
\begin{multline}\label{equation: 512}\End_\ol(A \pmod{p^{n+1}}) \otimes_\ZZ\ZZ_p \\ \cong \End_\ol\left( M \subset H^1_{Crys}(A\pmod{p}/W)\otimes W/(p^{n+1}) \right) \cap\End_\ol \left( A \pmod{p^{n+1}} \right) \otimes_\ZZ\ZZ_p.\end{multline}
We then calculate the right hand side and find that it is equal to $(\ok + p^{n}R) \otimes_\ZZ \ZZ_p$. Since we know a priori that $\End_\ol(A \pmod{p^{n+1}})$ has index equal to a power of $p$ in $R$ (see~\cite[Proposition 6.1]{GLdenominators}), our theorem will follow.

\

\id First, the uniqueness of $M$ is easy to establish. We have an isomorphism of $\ok \otimes_\ZZ W$ modules,
\[ H^1_{crys}(A\pmod{p}/W) \cong \oplus_{\varphi \in \Emb(\ok, W)} W(\varphi), \]
where  $W(\varphi)$ is just $W$ with the $\ok$ action given by $\varphi$. Since $p$ is unramified, for all $n\geq 1$, $W(\varphi) \not\cong W(\varphi') \pmod{p^n}$ as $\ok$-modules, for any distinct $\varphi, \varphi' \in \Emb(\ok, W)$. If $\Phi$ is the CM-type of $A$ it follows that if $M$ is a direct summand of rank $g$, which is an $\ok$-submodule, then $M$ must be $\oplus_{\varphi \in\Phi} W(\varphi) \pmod{p^{n+1}}$.

Let $R_n = \End_{\ol, W/(p^n)}(A \pmod{p^n})$. We prove by induction on $n$ that
\[ R_n = \ok + p^{n-1}R.\]
As remarked, it is enough to prove that after $p$-adic completion, and, in fact, we actually calculate the right hand side of (\ref{equation: 512}). The case $n=1$ is tautological.

Since we assumed that $A\pmod p$ is superspecial and $p$ is unramified in $K$, there are according to \cite{GLdenominators}
Table 3.3.1 (ii), Table 3.4.1 (iii), (iv), Table 3.5.1 (iii), (vi), and the results of C.-F. Yu \cite{Yu}, precisely two possibilities for $H^1_{crys}(A\pmod{p}/W)$, equivalently for the Dieudonn\'e module of $A\pmod{p}$, as an $\ok\otimes_\ZZ \ZZ_p$-module. Our calculations are done separately, according to these cases.

\

\id
\fbox{Case 1}\quad In this case, the completions at $p$ of the rings are
\[ \calO_{L, p} \cong \ZZ_p \oplus \ZZ_p, \qquad  \calO_{K, p} \cong \ZZ_{p^2} \oplus \ZZ_{p^2}, \]
where we are writing $\ZZ_{p^2}$ for $W(\FF_{p^2})$. The Dieudonn\'e module $\DD$ is a direct sum of Dieudonn\'e modules,
\[ \DD = \DD_1 \oplus \DD_2, \]
where for $i=1, 2$, $\DD_i$ has a basis relative to which Frobenius is given by the matrix
\[ \begin{pmatrix} 0 & p \\ 1 & 0 \end{pmatrix},\]
and the $i$-th copy of $\ZZ_{p^2}$ in $\calO_{K, p}$ acts on $\DD_i$ by
\[a \mapsto \begin{pmatrix} a & \\ & a^\sigma \end{pmatrix}\]
and on $\DD_{i+1 \pmod{2}}$ by zero.
(Here $\sigma$ is the Frobenius automorphism of $\ZZ_{p^2}$.) Clearly,
\[ \End_\ol(\DD) = \End(\DD_1) \times \End(\DD_2), \]
and, as one can easily check,
\[ \End(\DD_i) = \left\{ \left( \begin{smallmatrix}
\alpha & p\beta \\ \beta^\sigma & \alpha^\sigma
\end{smallmatrix} \right):
\alpha, \beta \in W(\FF_{p^2})
\right\}.\]
(The restriction on the entries $\left( \begin{smallmatrix}
a & b \\ c & d
\end{smallmatrix} \right)$ comes from the identity
\[ \left( \begin{smallmatrix}
a & b \\ c & d
\end{smallmatrix}\right) \left( \begin{smallmatrix}
0 & p \\ 1& 0
\end{smallmatrix}\right) =  \left( \begin{smallmatrix}
0 & p \\ 1& 0
\end{smallmatrix}\right)
\left( \begin{smallmatrix}
a^\sigma & b^\sigma \\ c^\sigma & d^\sigma
\end{smallmatrix}\right)\]
that an endomorphism of the Dieudonn\'e module must satisfy.)

Now, for every $n$, $\underline{\omega}_{A\pmod{p^n}} = \Span_{W/(p^n)}\{ \left(\begin{smallmatrix} 0 \\ 1 \end{smallmatrix} \right) \} \oplus \Span_{W/(p^n)}\{ \left(\begin{smallmatrix} 0 \\ 1 \end{smallmatrix} \right) \}$ in the decomposition $\DD = \DD_1 \oplus \DD_2$. By induction, the endomorphisms in $\End_\ol(\DD)$ preserving $\underline{\omega}_{A\pmod{p^n}}$ are
\[ (\ok + p^{n-1}R) \otimes_\ZZ\ZZ_p = \left\{ \left(   \left(\begin{smallmatrix} \alpha & p^n\beta  \\ p^{n-1}\beta^\sigma & \alpha^\sigma \end{smallmatrix} \right),  \left(\begin{smallmatrix}  \gamma & p^n\delta  \\ p^{n-1} \delta ^\sigma & \gamma ^\sigma  \end{smallmatrix} \right)\right): \alpha, \beta, \gamma, \delta \in W(\FF_{p^2}) \right\}.\]
The conditions for an endomorphism like that to preserve $\underline{\omega}_{A\pmod{p^{n+1}}}$ are that the vectors
 $$\left(\begin{smallmatrix} \alpha & p^n\beta  \\ p^{n-1}\beta^\sigma & \alpha^\sigma \end{smallmatrix} \right)  \left(\begin{smallmatrix} 0 \\ 1 \end{smallmatrix} \right),    \left(\begin{smallmatrix}  \gamma & p^n\delta  \\ p^{n-1} \delta ^\sigma & \gamma ^\sigma  \end{smallmatrix} \right)\left(\begin{smallmatrix} 0 \\ 1 \end{smallmatrix} \right)$$ 
are a multiple of $ \left(\begin{smallmatrix} 0 \\ 1 \end{smallmatrix} \right)$ modulo $p^{n+1}$. This is the case precisely when $\beta$ (respectively, $\delta$) are in $pW$. Thus, $\End(A\pmod{p^{n+1}})\otimes_\ZZ\ZZ_p = (\ok + p^nR) \otimes _\ZZ\ZZ_p$ and the proof is complete in Case 1.

\

\id
\fbox{Case 2}\quad In this case, the completions at $p$ of the rings are
\[ \calO_{L, p} \cong \ZZ_{p^2}, \qquad  \calO_{K, p} \cong \ZZ_{p^2} \oplus \ZZ_{p^2}, \]
where $\ZZ_{p^2}$ is embedded diagonally in  $\ZZ_{p^2} \oplus \ZZ_{p^2}$. The Dieudonn\'e module has a basis $\{e_1, e_2, e_3, e_4\}$ relative to which
\[ \Fr = \begin{pmatrix}   0 & 0 & p & 0 \\
0&0&0&1\\1&0&0&0\\ 0&p&0&0
\end{pmatrix}.\]
The element $(a, b)\in \calO_{K,p}$ acts by the diagonal matrix $\diag(a, b, a^\sigma, b^\sigma)$, and so $a\in \calO_{L,p}$ acts by $\diag(a, a, a^\sigma, a^\sigma)$. Change the order of the basis elements to get a new basis $\{e_1, e_4, e_3, e_2\}$. Then Frobenius is given by
$  \begin{pmatrix}   0 & pI_2 \\ I_2 &0
\end{pmatrix},$
and $(a, b)\in \calO_{K,p}$ acts by the diagonal matrix $\diag(a, b^\sigma, a^\sigma, b)$, and so $a\in \calO_{L,p}$ acts by $\diag(a,  a^\sigma, a^\sigma, a)$.

The conditions for a matrix $ \left(\begin{smallmatrix}  A & B  \\ C& D  \end{smallmatrix} \right) \in M_4(W)$ to be in $\End(\DD)$ are  $ \left(\begin{smallmatrix}  A & B  \\ C& D  \end{smallmatrix} \right)
 \left(\begin{smallmatrix}  0 & pI_2  \\ I_2& 0  \end{smallmatrix} \right) =
  \left(\begin{smallmatrix}  0 & pI_2  \\ I_2& 0  \end{smallmatrix} \right)    \left(\begin{smallmatrix}  A^\sigma & B^\sigma  \\ C^\sigma& D^\sigma  \end{smallmatrix} \right)$ and so we find,
  \[ \End(\DD) = \left\{  \begin{pmatrix}  A & pC^\sigma  \\ C& A^\sigma  \end{pmatrix}: A, C \in M_2(W(\FF_{p^2})) \right\}.\]
  The condition for such a matrix to be in $\End_\ol(\DD)$ is that it commutes with all matrices of the form $\diag(a,  a^\sigma, a^\sigma, a)$ where $a$ runs over $W(\FF_{p^2})$. An easy computation gives
  \[ \End_\ol(\DD) = \left\{  \begin{pmatrix}  A & pC^\sigma  \\ C& A^\sigma  \end{pmatrix}: A, C \text{ diagonal matrices } \in M_2(W(\FF_{p^2})) \right\}.\]
We have $\underline{\omega}_{A\pmod{p^n}} = \Span\{e_3, e_2\}$, where $e_3, e_2$ are the last $2$ vectors in the current basis. One argues by induction, as before, to prove that the endomorphisms in $\End_\ol(\DD)$ preserving $\underline{\omega}_{A\pmod{p^n}}$ are precisely those of the form
  \[ \left\{  \begin{pmatrix}  A & p^nC^\sigma  \\ p^{n-1}C& A^\sigma  \end{pmatrix}: A, C \text{ diagonal matrices } \in M_2(W(\FF_{p^2})) \right\} \cong (\ok + p^{n-1}R) \otimes_\ZZ \ZZ_p.\]
That completes the proof of Case 2 and, hence, of the theorem.

%==============================
%\qquad *
%==============================

%==============================
%\qquad *
%==============================

\section{Geometric interpretation} \label{sec:geometry}
%(Same notation: $K$ = CM field, $L= K^+$, $h_L^+ = 1$, $p$ is unramified in $K$ ....)

\id Let $W = W(\fpbar)$ and $Q = W \otimes_\ZZ\QQ$; $Q$ is the completion of the maximal unramified extension of $\QQ_p$. Assume that $p$ is unramified in $K$ and consider the functor on $W$-schemes associating to a $W$-scheme $S$ the isomorphism classes of triples
\begin{equation}\label{equation: triples 2} \uA = (A, \iota, \eta),
\end{equation}
where $A\arr S$ is an abelian scheme of relative dimension $g$, $\iota:\ok \arr \End_S(A)$ is a ring homomorphism and $\eta$ is a principal polarization of $A$ inducing complex conjugation on $K$. Arguments as in \cite{GL1} show that this functor is represented by an \'etale scheme over $W$, whose complex points are in natural bijection with
$\scrF \times \Cl(K)$, as described in Proposition \ref{prop: cm points}. 
In particular, isomorphism classes of $\uA$ over $\fpbar$ as in (\ref{equation: triples 2}) are also in bijection with $(\scrF \times \Cl(K))/\sim$ once we have fixed an identification of $\Hom(K, \CC)$ with $\Hom(K, \overline{\QQ_p})$.

Consider pairs $(A, \iota)$ over $\fpbar$ such that $A$ is a $g$-dimensional abelian variety and $\iota: \ok \arr \End(A)$ is a ring homomorphism such that $(A, \iota\vert_\ol)$ satisfies the Rapoport condition. One knows that there exists a principal $\ol$-polarization $\eta$ on $A$, unique up to isomorphism. We claim that $\eta$ automatically induces complex conjugation on $K$. This can be verified by case-by-case analysis using Lemma 6 of \cite{Chai}.

\subsection{Isomorphisms of CM abelian varieties}

Now, fix another CM field $K^\prime$ whose totally real subfield is $L$.  Consider $(A, \iota_A: \ok \arr \End(A))$ and $(A^\prime, \iota_{A^\prime}: \calO_{K^\prime} \arr \End(A^\prime))$ over $\fpbar$, and assume that we are given an isomorphism
\[\alpha: (A, \iota_A\vert_\ol) \overset{\sim}{\longrightarrow} (A^\prime, \iota_{A^\prime}\vert_\ol).\] We then get an embedding
\[ j_\alpha: \calO_{K^\prime} \arr \End(A), \qquad j_\alpha (r) = \alpha^{-1} \circ \iota_{A^\prime}(r)\circ \alpha.\]
If $\beta: (A, \iota_A\vert_\ol) \overset{\sim}{\longrightarrow} (A^\prime, \iota_{A^\prime}\vert_\ol)$ is another isomorphism, then $\beta = \gamma\circ \alpha$, where $\gamma \in \Aut(A^\prime, \iota_{A^\prime}\vert_\ol)$ and $j_\beta(r) = \alpha^{-1} \circ\gamma^{-1} \circ\iota_{A^\prime}(r) \circ\gamma \circ \alpha$, which gives us another embedding of $\calO_{K^\prime}$ into $\End(A)$. The embeddings are equal iff $\gamma^{-1} \circ\iota_{A^\prime}(r) \circ\gamma = \iota_{A^\prime}(r)$ for all $r\in \calO_{K^\prime}$, iff $\gamma\in \text{Cent}_{\End^0(A^\prime)}(K^\prime) \cap \Aut ((A^\prime, \iota_{A^\prime}\vert_\ol)) = \calO_{K^\prime}^\times$. (Here $\text{Cent}_{\End^0(A^\prime)}(K^\prime)$ denotes the centralizer of $K^\prime$ in $\End^0(A^\prime)$.)
Thus, each isomorphism class of $(A^\prime, \iota_{A^\prime})$ such that $(A, \iota_A\vert_\ol) \cong (A^\prime, \iota_{A^\prime}\vert_\ol)$ gives us $$\sharp (\Aut ((A^\prime, \iota_{A^\prime}\vert_\ol))/\calO_{K^\prime}^\times) =
\sharp (\Aut ((A, \iota_{A}\vert_\ol))/\calO_{K^\prime}^\times)$$ distinct embeddings of $\calO_{K^\prime}$ into $\End(A)$.

\subsection{Counting isomorphisms in the superspecial case}

Now assume we are in the superspecial reduction situation and fix an isomorphism,
\[\End_\ol(A) \cong R(\gera, \lambda_\gera),\]
for some unique $\gera \lhd \ok$ (Lemma~\ref{lem:ordersRa}, Theorem~\ref{classification}).  Then, writing $\calO_{K^\prime} = \ol[\omega]$ as before, to give an embedding
$\calO_{K^\prime} \longrightarrow \End_\ol(A)$ is to choose an element $[\alpha, \beta] \in R(\gera, \lambda_\gera)$
with trace equal to $\Tr(\omega)$ and with norm equal to $\Norm(\omega)$.  That is, an element of the set
$S(\gera, \lambda_\gera,1)$.
Such an embedding makes $(A, \iota_A\vert_\ol)$ into an abelian variety with CM by $\calO_{K^\prime}$,
and so the embedding $\calO_{K^\prime} \longrightarrow \End_\ol(A)$ arises via a particular isomorphism
\[(A, \iota_A: \ok \rightarrow \End(A)) \overset{\sim}{\rightarrow} (A^\prime, \iota^\prime: \calO_{K^\prime} \longrightarrow \End(A^\prime))\]
(where, in fact, we may take $A=A^\prime$ and $\iota^\prime$ restricts to $\iota_A$ on $\ol$).  We conclude that
\[\frac{\sharp S(\gera, \lambda_\gera,1)}{\sharp (R(\gera, \lambda_\gera)^\times/\calO_{K^\prime}^\times)} =
\sharp \{ (A^\prime, \iota_{A^\prime}: \calO_{K^\prime} \rightarrow \End_\ol(A^\prime))/\fpbar: (A^\prime, \iota_{A^\prime}\vert_\ol ) \overset{\sim}{\rightarrow} (A, \iota_A\vert_\ol)
\}.
\]
(where on the left hand side, we consider $(A^\prime, \iota_{A^\prime}: \calO_{K^\prime} \rightarrow \End_\ol(A^\prime))$ up to isomorphism with CM by  $\calO_{K^\prime}$, of course).
Exactly the same analysis is valid over $W/(p^n)$, and using that
$\End_{W/(p^n)}(A,\iota \vert \ol) \cong R(\gera, \lambda_\gera, p^{n-1})$, as follows from Theorem~\ref{thm: endomorphisms mod p to the n},
we get that
\begin{equation} \frac{ \sharp S(\gera, \lambda_\gera, p^{n-1})}{\sharp (R(\gera, \lambda_\gera, p^{n-1})^\times/\calO_{K^\prime}^\times)} =
\sharp \{ (A^\prime, \iota_{A^\prime}: \calO_{K^\prime} \rightarrow \End_\ol(A^\prime))/\fpbar:
(A^\prime, \iota_{A^\prime}\vert_\ol ) \overset{\sim}{\rightarrow} (A, \iota_A\vert_\ol)
\}.
\end{equation}

\subsection{Counting formulas for the number of isomorphisms for superspecial CM types}

Now fix a superspecial CM type $\Phi$ of $K$.  We consider representatives $\underline{A} =  (A, \iota_A: \ok \rightarrow \End(A))$ for the isomorphism classes with CM type $\Phi$.  For each such $\underline{A}$,
we may choose an isomorphism
$$ f_{\underline{A}} : \End_L^0(\underline{A}) \overset{\sim}{\longrightarrow} B_{p,L},$$
and hence get an embedding
$$f_{\underline{A}}\circ \iota_A : K \rightarrow B_{p,L}.$$
By Skolem-Noether, we may conjugate the identifications $f_{\underline{A}}$ so that the embeddings $ f_{\underline{A}}\circ \iota_A$
are {\emph the same}, and in fact, this will be the case if $f_{\underline{A}_1}$ and $f_{\underline{A}_2}$ are
related by a CM isogeny to begin with.
Then, for every $\underline{A}$, $f_{\underline{A}}(\End_\ol(\underline{A}))$ is a superspecial order containing $\ok$.
This order is uniquely determined by $\underline{A}$, up to conjugation by $K^\times$.

By our results, the representatives for these orders modulo conjugation by $K^\times$ are precisely the orders
$R(\gera,\lambda_\gera)$ as $\gera$ ranges over representatives for $\Cl(\ok)$.  We therefore conclude:
\begin{thm}
\begin{multline}\label{fixedCM}
\sum_{\gera} \# S(\gera, \lambda_{\gera}, p^{n-1})
= \\ \sum_{\tiny{\begin{tabular}{ll}$\underline{A}/(W/(p^n))$ \\with CM type $\Phi$\end{tabular}}} \# \left(\frac{\End_{\ol, W/(p^n)}(\underline{A})^\times}{\calO_{K^\prime}^\times} \right)\cdot
\#\left\{\begin{tabular}{ll} $\underline{A}^\prime$ with CM by $\calO_{K^\prime}$ such that\\
$(A^\prime, \iota_{A^\prime}\vert_\ol ) \cong (A, \iota_A\vert_\ol)$\end{tabular}
\right\}.
\end{multline}
(Of course, the $\underline{A}^\prime$ are taken up to isomorphism.)
\end{thm}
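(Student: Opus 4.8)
The plan is to sum, over all isomorphism classes $\underline{A}$ with CM type $\Phi$, the per-variety identity already derived in the previous subsection, and to match the two index sets via the classification of superspecial orders. First I would recall the identity established for a single $\underline{A}$: once we fix an identification $\End_{\ol, W/(p^n)}(\underline{A}) \cong R(\gera, \lambda_\gera, p^{n-1})$ --- which exists and determines $\gera$ uniquely up to the class group by Lemma~\ref{lem:ordersRa}, Theorem~\ref{classification} and Theorem~\ref{thm: endomorphisms mod p to the n} --- the displayed identity of the subsection ``Counting isomorphisms in the superspecial case'' reads
\[\sharp S(\gera, \lambda_\gera, p^{n-1}) = \sharp\!\left(\frac{R(\gera, \lambda_\gera, p^{n-1})^\times}{\calO_{K^\prime}^\times}\right)\cdot \#\left\{\underline{A}^\prime \text{ with CM by } \calO_{K^\prime} : (A^\prime, \iota_{A^\prime}\vert_\ol) \cong (A, \iota_A\vert_\ol)\right\}.\]
Since Theorem~\ref{thm: endomorphisms mod p to the n} gives $\End_{\ol, W/(p^n)}(\underline{A}) \cong R(\gera, \lambda_\gera, p^{n-1})$, the first factor on the right is exactly $\#(\End_{\ol, W/(p^n)}(\underline{A})^\times/\calO_{K^\prime}^\times)$, which matches the first factor in the theorem, and the second factor is verbatim the second factor in the theorem.

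Next I would sum this identity over the appropriate index set. On the left of the theorem the sum runs over representatives $\gera$ of $\Cl(K)$; on the right it runs over isomorphism classes $\underline{A}$ with CM type $\Phi$. The heart of the argument is that these two index sets are canonically identified. After the simultaneous Skolem--Noether normalization of the identifications $f_{\underline{A}}$ (so that the embeddings $f_{\underline{A}}\circ \iota_A : K \hookrightarrow B_{p,L}$ all coincide), the assignment $\underline{A} \mapsto f_{\underline{A}}(\End_\ol(\underline{A}))$ produces a superspecial order containing $\ok$, well defined up to conjugation by $K^\times$. By Lemma~\ref{lem:ordersRa} every such order is some $R(\gera, \lambda_\gera)$, and by Theorem~\ref{classification} the $K^\times$-conjugacy classes of these orders are in bijection with $\Cl(K)$ via $[\gera] \mapsto R(\gera, \lambda_\gera)$. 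Combined with the parameterization of principally polarized CM abelian varieties of fixed CM type $\Phi$ by $\Cl(K)$ (Proposition~\ref{prop: cm points} and the discussion of Section~\ref{sec:CMpoints}), this shows that $\underline{A} \mapsto [\gera]$ is a bijection between isomorphism classes with CM type $\Phi$ and $\Cl(K)$. Summing the per-variety identity over this common index set then yields the asserted equality.

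The main obstacle is establishing that $\underline{A} \mapsto [\gera]$ is a genuine, well-defined bijection rather than merely a surjection or a multivalued correspondence. This forces the Skolem--Noether step to be carried out \emph{compatibly} across all $\underline{A}$ at once, so that a single fixed embedding $K \hookrightarrow B_{p,L}$ underlies every order $\End_\ol(\underline{A})$ and Theorem~\ref{classification} applies uniformly. One must check that the $K^\times$-conjugacy class of $\End_\ol(\underline{A})$ depends only on the isomorphism class of $\underline{A}$ and not on the auxiliary choice of $f_{\underline{A}}$, and that non-isomorphic $\underline{A}$ with the same CM type yield distinct classes $[\gera]$; the latter is precisely where one uses that the two $\Cl(K)$-parameterizations (of varieties and of orders) are aligned by the reduction map. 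Once this identification is secured, the passage from the single-variety identity to the summed statement is purely formal.
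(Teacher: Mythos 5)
Your proposal is correct and follows essentially the same route as the paper's own proof: the per-variety counting identity of the preceding subsection, the Skolem--Noether normalization of the identifications $f_{\underline{A}}$, and the identification of isomorphism classes with CM type $\Phi$ with $\Cl(K)$ via Lemma~\ref{lem:ordersRa} and Theorem~\ref{classification}, followed by formal summation. The bijectivity of $\underline{A}\mapsto[\gera]$ that you single out as the main obstacle is precisely the step the paper itself asserts rather than proves in detail (``the representatives for these orders modulo conjugation by $K^\times$ are precisely the orders $R(\gera,\lambda_\gera)$''), so your write-up matches the paper's argument, being if anything more explicit about what needs checking there.
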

If we wish not to fix a CM type on $K$, we get the following:
\begin{thm} \label{thm:geometry}
\begin{multline}\label{anyCM}
(\# \text{\rm superspecial CM types} ) \times \sum_{\gera} \# S(\gera, \lambda_{\gera}, p^{n-1})
= \\ \sum_{\tiny{\begin{tabular}{ll}$\underline{A}/(W/(p^n))$ \\with CM by $\ok$\end{tabular}}} \# \left(\frac{\End_{\ol, W/(p^n)}(\underline{A})^\times}{\calO_{K^\prime}^\times} \right)\cdot
\#\left\{\begin{tabular}{ll} $\underline{A}^\prime$ with CM by $\calO_{K^\prime}$ such that\\
$(A^\prime, \iota_{A^\prime}\vert_\ol ) \cong (A, \iota_A\vert_\ol)$\end{tabular}
\right\}.
\end{multline}
\end{thm}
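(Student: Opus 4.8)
The plan is to obtain Theorem~\ref{thm:geometry} by summing the identity~(\ref{fixedCM}) of the preceding theorem over all superspecial CM types $\Phi$. The decisive observation is that the left-hand side of~(\ref{fixedCM}), namely $\sum_\gera \# S(\gera, \lambda_\gera, p^{n-1})$, is completely independent of the chosen superspecial CM type $\Phi$. Indeed, the set $S(\gera, \lambda_\gera, \ell)$ was defined in Section~\ref{maincounting} purely in terms of the order $R(\gera, \lambda_\gera, \ell) \subset B_{p,L}$ together with the trace and norm conditions imposed by $\calO_{K^\prime} = \ol[w]$; the ideals $\gera$ range over a fixed set of representatives of $\Cl(K)$, and no CM type enters the definition. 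Thus the quantity $\sum_\gera \# S(\gera, \lambda_\gera, p^{n-1})$ depends only on $K$, $K^\prime$, $p$ and $n$.

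Granting this, summing~(\ref{fixedCM}) over all superspecial CM types $\Phi$ immediately multiplies the left-hand side by $(\#\text{ superspecial CM types})$, producing the left-hand side of~(\ref{anyCM}). For the right-hand side I would argue that as $\Phi$ ranges over the superspecial CM types, the families $\{\underline{A}/(W/(p^n)) : \underline{A} \text{ has CM type } \Phi\}$ are pairwise disjoint and their union is exactly the family of all $\underline{A}/(W/(p^n))$ with CM by $\ok$ in the present superspecial reduction setting. Every such $\underline{A}$ carries a single well-defined CM type recording the action of $K$ on the tangent space at the origin (as in the discussion preceding Proposition~\ref{prop: cm points}), this type is one of the superspecial types, and conversely each superspecial type is realized. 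Consequently the double sum over $\Phi$ and over $\underline{A}$ with CM type $\Phi$ collapses into the single sum over all $\underline{A}$ with CM by $\ok$ appearing on the right-hand side of~(\ref{anyCM}); the automorphism-quotient factor $\#(\End_{\ol, W/(p^n)}(\underline{A})^\times/\calO_{K^\prime}^\times)$ and the inner count of the $\underline{A}^\prime$ travel along with each $\underline{A}$ and are neither lost nor duplicated.

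I expect the main point requiring care to be the partition claim, and in particular keeping straight the level at which one works. Over $W/(p^n)$ the isomorphism classes of $\underline{A}$ are parameterized by $\scrF \times \Cl(K)$ (without the involution $\sim$), so the honest CM type $\Phi \in \scrF$ genuinely partitions them; it is only at the special fibre over $\fpbar$ that $\Phi$ and $\bar\Phi$ become identified, as recorded in Proposition~\ref{prop: cm points} and Section~\ref{sec:geometry}. One must therefore verify that the CM type is well-defined and locally constant on the \'etale moduli scheme over $W/(p^n)$, so that grouping the $\underline{A}/(W/(p^n))$ by CM type is legitimate, and that precisely the superspecial types occur in the range of the right-hand sum. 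That the tangent-space action, hence $\Phi$, is pinned down by the reduction is exactly the mechanism behind the uniqueness of the lift $M$ in Theorem~\ref{thm: endomorphisms mod p to the n}. With this bookkeeping in place, the theorem is simply a reindexing of~(\ref{fixedCM}) enabled by the $\Phi$-independence of its left-hand side, and no further computation is required.
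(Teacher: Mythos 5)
Your overall route---sum identity~(\ref{fixedCM}) over the superspecial CM types, using that its left-hand side never mentions $\Phi$---is exactly the paper's (implicit) derivation: the paper states Theorem~\ref{thm:geometry} immediately after~(\ref{fixedCM}) with no further argument, so this reindexing is the intended proof, and your observation that $\sum_{\gera}\#S(\gera,\lambda_\gera,p^{n-1})$ is $\Phi$-independent is correct and is the easy half.

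The gap is in your partition claim. You assert that every $\underline{A}/(W/(p^n))$ with CM by $\ok$ has a CM type which ``is one of the superspecial types.'' Under the standing hypotheses of Sections~\ref{maincounting}--\ref{sec:geometry} this is false. Take $g=2$ with $p$ inert in $L$: then $S_0=\emptyset$, so the hypotheses force $p\ol$ to split in $K$, say $p\ol\ok=\gerP\overline{\gerP}$ with residue degree $2$. Of the four CM types of $K$, the two ``mixed'' ones (one $p$-adic embedding inducing $\gerP$, one inducing $\overline{\gerP}$) give superspecial reduction---this is Case 2 of Theorem~\ref{thm: endomorphisms mod p to the n}---but the other two give \emph{ordinary} reduction. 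So the families indexed by superspecial $\Phi$ do not exhaust the right-hand index set of~(\ref{anyCM}), which runs over all $\underline{A}$ with CM by $\ok$; your hedge ``in the present superspecial reduction setting'' either makes the claim false or silently shrinks the index set so that it no longer matches the theorem. What the reindexing actually requires is that every $\underline{A}$ of non-superspecial type contributes zero: if some $\underline{A}^\prime$ with CM by $\calO_{K^\prime}$ satisfied $(A^\prime,\iota_{A^\prime}\vert_\ol)\cong(A,\iota_A\vert_\ol)$, then $\End^0_\ol(A)$ would contain the two distinct CM fields $K$ and $K^\prime$ over $L$, hence by Chai's Lemma 6 (cf.\ Lemma~\ref{mustbessing}) $A$ would be supersingular; and under the splitting hypotheses supersingular CM reduction is automatically superspecial, because the supersingular-but-not-superspecial case occurs only when $p$ is inert in $K$ (Section~\ref{supersingularorders}), which is excluded by the requirement that primes in $S\setminus S_0$ split in $K$. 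Hence the inner count in~(\ref{anyCM}) vanishes for non-superspecial types and the double sum collapses as desired. (This argument also shows where $K\neq K^\prime$ enters: for $K=K^\prime$ the ordinary-type $\underline{A}$ would contribute and the identity would fail.) The rest of your bookkeeping---disjointness of the type-families and the well-definedness of the CM type over $W/(p^n)$---is fine; the missing ingredient is precisely this vanishing statement, not the (false) claim that non-superspecial types do not occur.
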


\subsection{Counting formulas for pairs of embeddings into superspecial orders} \label{sec: triples}

The left hand side of (\ref{fixedCM}), for $n=1$, has another interpretation.  Consider a pair of embeddings
$\iota:\ok \rightarrow R$ and $\iota^\prime:\calO_{K^\prime} \rightarrow R$ into a superspecial order $R$
such that both restrict to a fixed, given embedding of $\ol$ into $R$.
We call it an optimal triple $(\iota, \iota^\prime, R)$.  We say that $(\iota, \iota^\prime, R)$ are conjugate
to $(j, j^\prime, \tilde{R})$ if there exists $t \in B_{p,L}^\times$ such that $t^{-1}Rt = \tilde{R}$ and
$t^{-1} \iota(x)  t = j(x)$, for all $x \in \ok^\times$ and
$t^{-1} \iota^\prime(x)  t = j^\prime(x)$, for all $x \in \calO_{K^\prime}^\times$.

To count the number of conjugacy classes of triples, let us fix an embedding $I:K \rightarrow B_{p,L}$.
Then any optimal triple is conjugate to $(I \vert \calO_{K^\prime}, \iota^\prime, R)$, where R is a superspecial order containing $I(\ok)$.   We may still conjugate by $K^\times$  and so assume that
$R=R(\gera, \lambda_\gera)$ for some $\gera$.  We may still conjugate by $\ok^\times$ and if
$K \ne K^\prime$ that induces a faithful action of $\ok^\times/\ol^\times$ on the embeddings
$\iota^\prime : \calO_{K^\prime} \rightarrow R(\gera, \lambda_\gera)$ if they exist at all.
We conclude that
\[\# (\ok^\times/\ol^\times)^{-1}  \sum_{\gera} \# S(\gera, \lambda_{\gera}, 1) = \# \{ \text{\rm optimal triples up to conjugation}
\}.\]
Finally, we note the following corollary:
\begin{cor}
\begin{multline}\label{123}
\# \{ \text{\rm optimal triples up to conjugation}\} = \\
\# (\ok^\times/\ol^\times)^{-1}  \sum_{\gera} \# S(\gera, \lambda_{\gera}, 1)  = \\
 \sum_{\tiny{\begin{tabular}{ll}$\underline{A}/(W/(p^n))$ \\with CM type $\Phi$\end{tabular}}}
 \# (\ok^\times/\ol^\times)^{-1}\# (\calO_{K^\prime}^\times/\ol^\times)^{-1}
  \# \left(\frac{\End_{\ol, W/(p^n)}(\underline{A})^\times}{\calO_{L}^\times} \right)\times \\
\#\left\{\begin{tabular}{ll} $\underline{A}^\prime$ with CM by $\calO_{K^\prime}$ such that\\
$(A^\prime, \iota_{A^\prime}\vert_\ol ) \cong (A, \iota_A\vert_\ol)$\end{tabular}
\right\}.
\end{multline}
\end{cor}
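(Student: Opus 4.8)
The plan is to establish the two equalities in turn. The \emph{first} equality, relating the number of optimal triples up to conjugation to $\#(\ok^\times/\ol^\times)^{-1}\sum_\gera \# S(\gera, \lambda_\gera, 1)$, is exactly the identity derived in the discussion immediately preceding the corollary, so I would simply invoke it. For completeness I would recall its proof: after fixing an embedding $I\colon K\hookrightarrow B_{p,L}$, every optimal triple is conjugate to one of the form $(I\vert_{\calO_{K^\prime}}, \iota^\prime, R)$ with $R$ a superspecial order containing $I(\ok)$; conjugating further by $K^\times$ and applying Lemma~\ref{lem:ordersRa} together with Theorem~\ref{classification}, I may assume $R=R(\gera,\lambda_\gera)$ for a unique class $[\gera]$. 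The residual freedom is conjugation by $\ok^\times$, and when $K\neq K^\prime$ this induces a \emph{faithful} action of $\ok^\times/\ol^\times$ on the set of embeddings $\iota^\prime\colon \calO_{K^\prime}\to R(\gera,\lambda_\gera)$, each such embedding corresponding to an element of $S(\gera,\lambda_\gera,1)$. Summing over $\gera$ and dividing by the common orbit size $\#(\ok^\times/\ol^\times)$ yields the first equality.

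For the \emph{second} equality I would start from the case $n=1$ of~(\ref{fixedCM}), so that $p^{n-1}=1$ and $\End_{\ol,W/(p)}(\underline{A})=\End_\ol(A\bmod p)=:R$ is the ambient superspecial order, giving $\sum_\gera \# S(\gera,\lambda_\gera,1)=\sum_{\underline{A}}\#\bigl(R^\times/\calO_{K^\prime}^\times\bigr)\cdot\#\{\underline{A}^\prime\dots\}$. Multiplying through by $\#(\ok^\times/\ol^\times)^{-1}$ already matches the two leftmost members of the corollary in view of the first equality, so the remaining content is to rewrite the coincidence factor $\#(R^\times/\calO_{K^\prime}^\times)$. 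Here I would use the chain of subgroups $\ol^\times\subseteq\calO_{K^\prime}^\times\subseteq R^\times$, where the middle inclusion comes from the CM-by-$\calO_{K^\prime}$ structure that is present exactly when the set $\{\underline{A}^\prime\dots\}$ is non-empty (and the term vanishes otherwise). All these indices are finite because $B_{p,L}$ is a definite quaternion algebra, so $R^\times/\calO_L^\times$ is finite; multiplicativity of indices then gives $\#(R^\times/\calO_{K^\prime}^\times)=\#(\calO_{K^\prime}^\times/\ol^\times)^{-1}\#(R^\times/\calO_L^\times)$, using $\ol^\times=\calO_L^\times$. Substituting this into the previous display produces precisely the right-hand side of~(\ref{123}).

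I expect the only genuine subtlety to be the bookkeeping of the unit indices rather than any deep new input. Two points deserve care. First, that $\calO_{K^\prime}^\times$ really embeds as a subgroup of $R^\times$, and that the rewritten factor $\#(\calO_{K^\prime}^\times/\ol^\times)^{-1}\#(R^\times/\calO_L^\times)$ is \emph{intrinsic}, i.e. independent of the chosen embedding $\iota^\prime$; this is in fact the reason the symmetric form is preferable to $\#(R^\times/\calO_{K^\prime}^\times)$, which a priori depends on the embedding. Second, the faithfulness of the $\ok^\times/\ol^\times$-action used in the first equality, which relies on $K\neq K^\prime$: an element of $\ok^\times$ centralizing $\calO_{K^\prime}$ would centralize the subalgebra of $B_{p,L}$ generated by $K$ and $K^\prime$, which is all of $B_{p,L}$ once $K\neq K^\prime$, and hence must lie in the center $L$, i.e. in $\ol^\times$. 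Granting these points, the corollary follows by concatenating the two equalities.
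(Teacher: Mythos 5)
Your proposal is correct and follows essentially the same route as the paper: the first equality is exactly the identity derived in the discussion preceding the corollary, and the second is obtained by combining it with Theorem~\ref{fixedCM} at $n=1$ and rewriting $\#\bigl(R^\times/\calO_{K^\prime}^\times\bigr)$ via multiplicativity of indices along $\ol^\times\subseteq\calO_{K^\prime}^\times\subseteq R^\times$. Your added justifications (faithfulness of the $\ok^\times/\ol^\times$-action via the centralizer of the subalgebra generated by $K$ and $K^\prime$, and the intrinsic nature of the symmetric unit factor) are points the paper leaves implicit, but they do not change the argument.
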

If we multiply the whole set of equalities (\ref{123}) above by the number of superspecial types for $K$, we may be justified in calling the new right hand side of (\ref{123}) the ``coincidence number of $K$ and $K'$ at $p$", as it counts the number of coincidences between abelian varieties with CM by $K$ and abelian varieties with CM by $K'$ in characteristic $p$, once one considers them as abelian varieties with RM only.

%%%%%%%%%%
\section{The connection to moduli spaces} \label{sec: moduli}

In their paper \cite{Gross Zagier}, Gross and Zagier give a beautiful formula. Let $E_1$ and $E_2$ be two elliptic curves over $W = W(\fpbar)$.
%where $E_i$ has CM by the ring of integers of a quadratic imaginary field $K_i$, and assume that the discriminants of $K_i$ are relatively prime
Let $j_i$ be the $j$-invariant of $E_i$. Their formula is:
\[ \val_p(j_1 - j_2) = \frac{1}{2} \sum_{n\geq 1} \sharp \text{Isom}_n(E_1,E_2),  \]
where $\text{Isom}_n$ denotes the isomorphisms between the reduction of $E_i$ modulo $(p^n)$.

The proof Gross and Zagier provided is through direct manipulations of Weierstrass equations. A more conceptual proof was given by Brian Conrad in \cite{Conrad}. The proof makes essential use of moduli spaces, but uses many features unique to modular curves and, hence, is not readily amenable to generalization. This result is the basis of interpreting their theorem on $J(d, d^\prime)$ and $\ord_\lambda(J(d, d^\prime))$ (cf. Introduction), as an arithmetic intersection number. It thus remains a question of how to give an interpretation for our theorems, Theorem~\ref{thm:geometry} for example, as an intersection number of CM points on Shimura varieties. 

One possibility is to use Shimura curves associated with quaternion algebras over totally real fields, split at exactly one infinite prime. This approach entails using the $p$-adic, not-quite-canonical, models for these Shimura curves, following Morita, Carayol and Boutot-Carayol. The other possibility is to view these CM $0$-cycles as lying on a Hilbert modular variety. This approach is complicated by the fact that there is no ``robust" definition of the arithmetic intersection of $0$-cycles ($1$-cycles on the arithmetic models) once their co-dimension is bigger than $1$. This calls for an ad-hoc approach and it has its own challenging problems. 

For now we will replace the notion of an intersection number with something less precise, and define instead a {\it coincidence number}, which does not reflect the power to which various primes may appear in the differences of invariants, but at least reflects whether a prime appears or not in the factorizations of the differences of invariants.  In Section~\ref{sec:example} we will give an example to illustrate the coincidence number in computations.

Let $L$ be a totally real field with strict class number $1$, and $K_i, i=1, 2,$ two CM fields containing $L$ as their maximal totally real subfield. Let $p$ be a prime, unramified in both $K_1, K_2$. For each CM field we can associate a zero cycle, $\text{CM}(K_i)$, on the generic fiber of the Hilbert modular variety $\calH_L$ parameterizing principally polarized abelian varieties with RM by $\ol$ (see Section~\ref{sec:CMpoints}). Each point $x_\eta$ in $\text{CM}(K_i)$ can be extended to a $W(\fpbar)$-point $x$ on $\calH_L$ (see \cite[Lemma 2.3]{GLdenominators}). This implicitly depends on a choice of a prime $\gerp$ in a common field of definition for all the CM abelian varieties under consideration. We write $\text{CM}(K_1)  = \sum_i x_i$, $\text{CM}(K_2)  = \sum_j y_j$. We then define the arithmetic \emph{coincidence number} (for lack of better terminology) of $\text{CM}(K_1) $ and $\text{CM}(K_2)$ as
\[ \text{CM}(K_1) _\wedge \text{CM}(K_2) = \sum_{ij} x_i {_\wedge} y_j\]
where $x_i {_\wedge} y_j$ is defined as 
$1$ if $x_i$ and $y_j$  have isomorphic reduction modulo $p$, and as zero otherwise. In this notation, Theorem~\ref{thm:geometry} implies the following: 
\begin{cor} The contribution from a prime $p$ of superspecial reduction to $\text{CM}(K_1) _\wedge \text{CM}(K_2)$ is equal to $(\# \text{\rm superspecial CM types} ) \times \sum_{\gera} \# S(\gera, \lambda_{\gera}, 1)$.\footnote{Likewise, the notion of superspecial CM types depends on the implicit choice of $\gerp$.} This number, and in particular whether it is zero or not, can be effectively calculated.
\end{cor}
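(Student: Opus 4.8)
The plan is to derive this statement directly from Theorem~\ref{thm:geometry}, specialized to $n=1$, by translating its right-hand side into the language of the coincidence number. Setting $n=1$ makes $p^{n-1}=1$, so Theorem~\ref{thm:geometry} reads
$$(\#\,\text{superspecial CM types})\times \sum_\gera \# S(\gera,\lambda_\gera,1) = \sum_{\underline A/\fpbar \text{ with CM by }\ok} \#\Big(\tfrac{\End_\ol(\underline A)^\times}{\calO_{K^\prime}^\times}\Big)\cdot \#\{\underline A^\prime \text{ with CM by }\calO_{K^\prime}:(A^\prime,\iota_{A^\prime}\vert_\ol)\cong(A,\iota_A\vert_\ol)\}.$$
So it suffices to identify the right-hand side with the contribution of $p$ to $\text{CM}(K_1)_\wedge\text{CM}(K_2)$, where $K_1=K$ and $K_2=K^\prime$.

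First I would recall that, by the arguments of Section~\ref{sec:geometry}, the moduli functor of triples $(A,\iota,\eta)$ is represented by a scheme \'etale over $W$; hence reduction modulo $p$ gives a bijection between the $K$-CM points $x_i$ in characteristic $0$ and their superspecial reductions over $\fpbar$, and likewise for the $y_j$ and $K^\prime$. Under this dictionary, $x_i {_\wedge} y_j=1$ precisely when the reductions of $x_i$ and $y_j$ define the same point of $\calH_L$ over $\fpbar$, that is, when the underlying RM abelian varieties $(A,\iota_A\vert_\ol,\eta)$ and $(A^\prime,\iota_{A^\prime}\vert_\ol,\eta^\prime)$ are isomorphic (the polarizations play no role since $h_L^+=1$). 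Grouping the double sum $\sum_{ij}x_i {_\wedge} y_j$ according to the common reduction point, the inner count records the $K^\prime$-CM points specializing to that point; by the analysis of Section~\ref{sec:geometry} the number of $K^\prime$-CM structures restricting to a fixed $(A,\iota_A\vert_\ol)$ equals $\# S(\gera,\lambda_\gera,1)/\#(\End_\ol(A)^\times/\calO_{K^\prime}^\times)$, with $\End_\ol(A)\cong R(\gera,\lambda_\gera)$. Summing over the $K$-CM points, whose orders run through the $R(\gera,\lambda_\gera)$ as $\gera$ ranges over $\Cl(K)$ (Theorem~\ref{classification}) and over the superspecial CM types, then reproduces the right-hand side displayed above.

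For the effectivity assertion I would invoke Theorem~\ref{SS1S2}, which rewrites $\sum_\gera \# S(\gera,\lambda_\gera,1)$ as $w_K\sum_{x\text{ satisfies }\mathbf{C}}\delta(x)\,\# S_2(\gera,x,1)$; here each $\# S_2(\gera,x,1)$ counts integral ideals of $\ok$ of prescribed norm $\tfrac{x^2-dd^\prime}{4p}$ lying in a fixed ideal class, the sum over $x$ is finite because $x^2-dd^\prime$ must be totally negative, and the number of superspecial CM types is determined combinatorially from the splitting of $p$ in $K$. All of these are explicitly computable, which yields the final clause of the corollary; in particular one can decide whether the total vanishes.

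The step I expect to be the main obstacle is the bookkeeping in the geometric-to-arithmetic translation. One must match the unweighted $0/1$ count defining $\text{CM}(K_1)_\wedge\text{CM}(K_2)$ against the automorphism-weighted sum produced by Theorem~\ref{thm:geometry}, and this requires keeping careful track of three multiplicities at once: the factor $\#(\End_\ol(A)^\times/\calO_{K^\prime}^\times)$ coming from automorphisms of the RM abelian variety, the two-to-one passage between full CM structures and CM points implicit in the equivalence $(\Phi,[\gera])\sim(\bar\Phi,[\gera^{-1}])$ of Proposition~\ref{prop: cm points}, and the restriction to superspecial CM types. Verifying that these factors combine correctly, so that the chosen normalization of the contribution of $p$ agrees with $(\#\,\text{superspecial CM types})\times\sum_\gera \# S(\gera,\lambda_\gera,1)$, is where the real care is needed.
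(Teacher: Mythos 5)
Your route is the paper's own: the paper gives no argument for this corollary beyond the phrase ``In this notation, Theorem~\ref{thm:geometry} implies the following,'' and the effectivity clause is, as you say, supplied by Theorem~\ref{SS1S2} together with the finiteness of the set of $x$ satisfying conditions {\bf C} (forced by total negativity of $x^2-dd'$) and the computability of the ideal counts $\#S_2$. So structurally your proposal reconstructs the intended derivation: specialize Theorem~\ref{thm:geometry} to $n=1$, unwind the definition of the coincidence number from Section~\ref{sec: moduli}, and quote the counting formulas for effectivity.

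However, the obstacle you flag in your last paragraph is not bookkeeping that ``real care'' will dispose of; for the equality as literally stated, the verification you defer cannot succeed. The coincidence number is by definition an unweighted $0/1$ count over pairs of points of $\calH_L$, whereas the right-hand side of Theorem~\ref{thm:geometry} is a sum over CM triples $\underline{A}$ --- a $2$-to-$1$ cover of the CM points, by Proposition~\ref{prop: cm points} --- weighted by the indices $\#\bigl(\End_\ol(\underline{A})^\times/\calO_{K'}^\times\bigr)$, and these weights exceed $1$ exactly in cases of interest. In the paper's own example (Section~\ref{sec:example}), $K=\QQ(\zeta_5)$ gives $\mu_{10}\subset\End_\ol(\underline{A})^\times$, while $\mu_{10}\ol^\times\cap\calO_{K'}^\times=\ol^\times$ because $K\cap K'=L$ inside $B_{p,L}$; hence each coincidence pair at $p=19$ is counted with weight at least $5$ on the formula side but contributes $1$ to $\text{CM}(K_1)_\wedge\text{CM}(K_2)$. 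What survives unconditionally --- and is all that the corollary's final sentence and the application to Igusa invariants actually use --- is the equivalence of vanishing: a coincidence pair with superspecial reduction exists iff some $S(\gera,\lambda_\gera,1)$ is nonempty (via Theorem~\ref{classification} and the embedding count of Section 7) iff $(\#\text{ssp CM types})\sum_\gera\#S(\gera,\lambda_\gera,1)>0$, since every nonzero weighted summand is at least $1$. Your write-up should therefore either prove and assert exactly this equivalence, or take the weighted sum as the definition of the superspecial contribution; it should not promise an identity of integers. One further correction: \'etaleness gives a bijection under reduction for points of the CM moduli scheme of \emph{triples}, not for the cycle $\text{CM}(K_i)$ in $\calH_L$ --- distinct $x_i$ can perfectly well have isomorphic reductions (that is the phenomenon being counted, applied to a single field) --- so the double sum should be grouped by $\fpbar$-isomorphism classes rather than through the claimed bijection between characteristic-zero CM points and their reductions.
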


%%%%%%%%%%

\section{Supersingular orders} \label{supersingularorders}

\begin{thm} Let $p$ be a rational prime and $k$ an algebraically closed field of characteristic $p$. Let $K$ be a quartic CM field and let $L = K^+$ be its real subfield.
 Let $A/k$ be an abelian surface which is supersingular, but not superspecial, with complex multiplication by $\ok$. Let $\calO = \End_{\ol}(A)$, where the endomorphisms are over $k$. Let $B_{p, \infty}$ be the quaternion algebra over $\QQ$ ramified at only $p$ and $\infty$ and let $B_{p, L} = B_{p, \infty} \otimes_\QQ L$. Then $\calO$ is an Eichler order of $B_{p, L}$ of discriminant $p^2$.
 \end{thm}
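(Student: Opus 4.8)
The plan is to reduce the global statement to a purely local computation at $p$ using Dieudonné theory, exactly in the spirit of the proof of Theorem~\ref{thm: endomorphisms mod p to the n}. First I would set up the global framework: since $A$ is supersingular we have $\End^0_L(A)\cong B_{p,L}$ by \cite[Lemma 6]{Chai}, and $\calO=\End_\ol(A)$ is an order in $B_{p,L}$. Its reduced discriminant is the product of the local reduced discriminants $\disc(\calO_\gerq)$, where $\calO_\gerq=\calO\otimes_{\ol}\calO_{L_\gerq}$, so it suffices to compute each $\calO_\gerq$. For $\gerq\nmid p$ the order is maximal — the index phenomena for a supersingular abelian variety are concentrated at $p$, as in the analysis underlying \cite[Prop.~6.1]{GLdenominators} — so $\calO_\gerq\cong \M_2(\calO_{L_\gerq})$ contributes trivially. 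Everything is therefore concentrated at $p$, and it remains to show that $\calO_p:=\calO\otimes_\ZZ\ZZ_p$ is a local Eichler order of reduced discriminant $p^2\ol$.

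Next I would show that $p$ must be inert in $L$. If instead $p$ split in $L$, write $p\ol=\gerp_1\gerp_2$ with both primes of residue degree $1$; then $\calO_{L,p}\cong\ZZ_p\oplus\ZZ_p$ and the Dieudonné module splits as $\DD=\DD(\gerp_1)\oplus\DD(\gerp_2)$, each summand of $W$-rank $2$. Since the Frobenius $\Fr$ of $\DD$ commutes with the $\ol$-action, it commutes with the two idempotents and hence preserves each $\DD(\gerp_i)$. As $A$ is supersingular, each rank-$2$ summand is the (unique) supersingular Dieudonné module of slope $1/2$, whose $a$-number is $1$; thus $a(A)=2$ and $A$ is superspecial, contrary to hypothesis. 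Therefore $\calO_{L,p}\cong W(\FF_{p^2})$, the single prime $\gerp\mid p$ has even residue degree so $\gerp\in S\setminus S_0$ and $B_{p,L}$ is split at $\gerp$, and the target becomes the level-$\gerp^2$ Eichler order in $\M_2(L_\gerp)$, of reduced discriminant $\gerp^2=p^2\ol$.

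Then I would carry out the local computation. Invoking \cite{GLdenominators} (Tables 3.3.1, 3.4.1, 3.5.1) together with the results of Yu \cite{Yu}, there are only the two natural possibilities for $\DD$ as an $\ok\otimes_\ZZ\ZZ_p$-module in the supersingular, non-superspecial case: $\gerp$ inert in $K$, so $\calO_{K,p}\cong W(\FF_{p^4})$ and $\DD=\oplus_{i=1}^4 H(\psi_i)$ is a sum of four rank-$1$ pieces cyclically permuted by $\Fr$ with exactly one ``jump'' of valuation $2$ (forcing $a(A)=1$); and $\gerp$ split in $K$, so $\calO_{K,p}\cong W(\FF_{p^2})\oplus W(\FF_{p^2})$, the same local CM algebra as Case~2 of Theorem~\ref{thm: endomorphisms mod p to the n} but now with an entangled, non-superspecial $\Fr$. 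In each case I would fix an explicit normal form for $\Fr$ adapted to the $\ok$-isotypic decomposition, compute $\End(\DD)=\{M\in\M_4(W):M\,\Fr=\Fr\,M^{(\sigma)}\}$, and intersect with the commutant of the $\calO_L$-action to obtain $\calO_p=\End_\ol(\DD)$, just as in the superspecial computation. A direct matrix computation then identifies $\calO_p$ with the standard Eichler order of level $\gerp^2$ in $\M_2(L_\gerp)$; combined with maximality away from $p$, this gives that $\calO$ is an Eichler order of discriminant $p^2$.

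The main obstacle is this last local step. The non-superspecial hypothesis ($a$-number $1$) means that $\Fr$ no longer respects the $\ok$-grading in the clean, block-diagonal way it does in the superspecial case, so producing the correct normal form and then verifying that the commutant has level \emph{exactly} $\gerp^2$ — rather than $\gerp$, as for the superspecial order, or a deeper, possibly non-Eichler, order — is the delicate point. Checking the Eichler property itself amounts to exhibiting $\calO_p$ as the intersection of the two maximal orders of $\M_2(L_\gerp)$ stabilizing the two $\Fr$-stable lattice flags in $\DD$, which the explicit normal form should make transparent.
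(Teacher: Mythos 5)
Your overall strategy coincides with the paper's: show $\calO$ is maximal away from $p$, reduce to a Dieudonn\'e-module computation of $\calO_p=\End_\ol(\DD)$, and identify that commutant explicitly as an Eichler order of reduced discriminant $p^2$. However, your local case analysis at $p$ contains a genuine error. The case ``$\gerp$ split in $K$, with an entangled, non-superspecial $\Fr$'' does not exist, and its description is internally inconsistent. Frobenius on $\DD$ is $\sigma$-semilinear and commutes with the $\ok$-action, so it carries the $\psi$-isotypic line to the $\sigma\circ\psi$-isotypic line. When $p$ is inert in $L$ and $\gerp$ splits in $K$, the four embeddings of $\ok$ into $W(\fpbar)$ fall into two $\sigma$-orbits of size two, one for each prime $\gerP$, $\overline{\gerP}$ above $\gerp$; hence $\Fr$ preserves each rank-two summand $\DD_\gerP$, $\DD_{\overline{\gerP}}$, so no ``entanglement'' between them is possible. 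Each summand is then a height-$2$, dimension-$1$, slope-$1/2$ Dieudonn\'e module, i.e.\ that of a supersingular elliptic curve, with $a$-number $1$; so $A$ has $a$-number $2$ and is superspecial --- this is exactly your own argument ruling out $p$ split in $L$, applied one level up, and it is consistent with the paper's remark that the supersingular-not-superspecial situation occurs if and only if $p$ is inert in $K$ (the split-in-$K$ configuration is Case 2 of Theorem~\ref{thm: endomorphisms mod p to the n}, which is superspecial). So there is exactly one local case, the inert one, and in that case your plan (normal form for $\Fr$, compute the commutant, exhibit it as an intersection of two maximal orders) is precisely what the paper carries out --- including a separate computation for $p=2$, which your proposal does not anticipate.

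A second gap: your justification of maximality at $\gerq\nmid p$ does not follow from the result you cite. Proposition 6.1 of \cite{GLdenominators}, as used in this paper, says that $\End_\ol$ of the mod-$p^{n}$ reduction has $p$-power index in $\End_\ol$ of the mod-$p$ reduction; it says nothing about the local structure of $\calO$ at primes away from $p$. The paper's actual argument is concrete: by \cite{BG}, any two supersingular abelian surfaces with real multiplication are linked by an isogeny of $p$-power degree respecting the $\ol$-structure, so $\calO_\gerq \cong \End_\ol(E\otimes_\ZZ\ol)_\gerq = (R\otimes_\ZZ\ol)_\gerq$ for $E$ a supersingular elliptic curve with $R=\End(E)$ maximal in $B_{p,\infty}$, and the latter is maximal at $\gerq$. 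You need this isogeny argument (or an equivalent) to make the away-from-$p$ step rigorous; as written, that step and the deferred level-$p^2$ computation are the two places where your proposal falls short of a proof.
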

 \begin{proof} 
Let $H$ be a quaternion algebra over a number field $F$ and let $R$ be an order of $H$, containing $\calO_F$. Recall that $R$ is called an Eichler order if it is the intersection of two maximal orders. This is a local property~\cite[p. 84]{Vigneras}. If $F$ denotes now a non-archimedean local field with uniformizer $\pi$, then an order of $H$, containing $\calO_F$, is Eichler (namely, is the intersection of two maximal orders of $H$) if and only if it is conjugate to the order 
\[ M = \begin{pmatrix} \calO_F & \calO_F \\ \pi^n \calO_F & \calO_F   \end{pmatrix},\]
for some positive integer $n$~\cite[p. 39]{Vigneras}.

We wish to find the completion of $\calO$ at every rational prime ideal $\gerl$ of $\ol$.

First, since there exists an isogeny of degree a power of $p$ between any two supersingular abelian surfaces $A, A^\prime$, with real multiplication, respecting the real multiplication structure \cite{BG}, for $\gerl \nmid p$, we have $\calO_{\gerl}:= \calO\otimes_{\ol} \calO_{L, \gerl} \cong \calO^\prime_{\gerl}$, where $\calO^\prime =  \End_{\ol}(A^\prime)$. We may choose for $A^\prime$ the surface $E \otimes_\ZZ\ol$, where $E$ is a supersingular elliptic curve with $R = \End(E)$ a maximal order in $B_{p, \infty}$. Then $ \calO^\prime = \End(A^\prime) = R \otimes_\ZZ \ol$ and so $ \calO^\prime$ and $\calO$ are maximal orders at $\gerl$.

 We remark that according to the classification of the reduction of abelian surfaces with CM, the situation we consider occurs if and only if $p$ is inert in $K$. That is, in the following cases:
 \renewcommand{\labelenumi}{(\alph{enumi})}
 \begin{enumerate}
 \item
 $K/\QQ$ is cyclic Galois and $p$ is inert in $K$ (case (iii) in Table 3 of \cite{GLdenominators});
 \item
 $K/\QQ$ is non-Galois and $p$ is inert in $K$ (case (vii) in Table 5  of \cite{GLdenominators}).
 \end{enumerate}
 \id Following the conventions of \cite{GLdenominators}, the Dieudonn\'e module of the $p$-divisible group of the reduction of $A$ modulo $\gerp_L$ is
 \[ \DD \cong \WW(1)\oplus \WW(y^2)\oplus \WW(y) \oplus \WW(y^3),\]
 where $\WW(\alpha)$ denotes the Witt vectors of $\fpbar$ where $\ok$ acts through the embedding
$\alpha: K \arr \Qpbar$. Let $\sigma$ denote the Frobenius automorphism of $\WW$. Then:
  \renewcommand{\labelenumi}{(\alph{enumi})}
 \begin{enumerate}
 \item
$\ol$ acts on $\DD$ by $\ell \mapsto \diag(\ell, \ell, \sigma(\ell), \sigma(\ell))$, and
\item
$\ok$  acts on $\DD$ by $k \mapsto \diag(k, \sigma^2(k), \sigma(k), \sigma^3(k))$.
  \end{enumerate}
 \id The $p$-adic CM type is $\{1, y^3\}$, according to our conventions, but since the situation is symmetric, we may assume that the $p$-adic CM type is  $\{1, y\}$, and so Frobenius is given in the standard basis by the matrix
 \[ \Fr = \begin{pmatrix} 0 & 0 & 0 & 1 \\ 0 & 0 & p& 0 \\ p & 0 & 0 & 0 \\ 0 & 1 & 0 & 0\end{pmatrix}.\]
By a theorem of Tate, $\End(A)\otimes_\ZZ \ZZ_p \cong \End(\DD)$, where on the right the endomorphisms are as Dieudonn\'e modules (cf. \cite[Theorem 5]{WM}): namely, in this case, $\WW$-linear maps $\DD \arr \DD$ that commute with Frobenius. In the same way, \[\calO_p = \End_{\ol}(A)\otimes_{\ol} \ol_p = \End_{\ol}(A)\otimes_{\ZZ} \ZZ_p \cong  \End_{\ol}(\DD).\]

Since $\calO_p$ commutes with $\ol$, one finds that $\calO_p$ is given by block diagonal matrix with blocks of size $2$. Writing the general such matrix as
\[ M = \begin{pmatrix} m_{11} & m_{12} & & \\ m_{21} & m_{22} & & \\ & & n_{11} & n_{12} \\ & & n_{21} & n_{22} \end{pmatrix},\]
 the condition $M \cdot \Fr = \Fr \cdot \sigma(M)$ gives, after a short computation,
 \[\calO_p = \left\{   \begin{pmatrix} m_{11} & m_{12} & & \\ p^2m_{12}^{\sigma^2} & m_{11}^{\sigma^2} & & \\ & & m_{11}^{\sigma} & pm_{12}^{\sigma} \\ & & pm_{12} ^{\sigma^3}& m_{11}^{\sigma^3} \end{pmatrix}: m_{ij} \in \WW(\FF_{p^4}) \right\}.\]
Since $p$ is inert in $L$, the quaternion algebra $B_{p, L}$ is ramified only at the two places at infinity. In particular, $B_{p, L} \otimes_L L_p \cong M_2(\QQ_{p^2})$, where $\QQ_{p^2} = \WW(\FF_{p^2}) \otimes_\ZZ \QQ$. To determine the nature of $\calO_p$, we want to recognize it as a suborder of $M_2(\WW(\FF_{p^2}))$.

 \

  \id {\bf The case $p \neq 2$.\quad } Put
  \[ i = \begin{pmatrix} & 1 \\ p^2 & \end{pmatrix}, \qquad j = \begin{pmatrix} \alpha & \\ & \alpha^{\sigma^2}\end{pmatrix}, \]
  where $\alpha$ is chosen such that $\WW(\FF_{p^4}) = \WW(\FF_{p^2})[\alpha]$ and  $\alpha^{\sigma^2} = -\alpha$.
 We have then
  \[ i^2 = p^2,\qquad  j^2 = \alpha^2, \qquad k:= ij = -ji = \begin{pmatrix} & -\alpha \\ p^2 \alpha \end{pmatrix}.\]
  Writing $m_1 = x_1 + y_1\alpha, m_2 = x_2 + y_2\alpha$ with $x_i, y_i \in \WW(\FF_{p^2})$ we can write,
 \begin{equation*}
  \begin{split} \begin{pmatrix} m_{11} & m_{12} \\ p^2m_{12}^{\sigma^2} & m_{11}^{\sigma^2} \end{pmatrix} & =
  x_1  \begin{pmatrix} 1&  \\  & 1\end{pmatrix} + y_1  \begin{pmatrix} \alpha &  \\  & \alpha^{\sigma^2}\end{pmatrix}  + x_2  \begin{pmatrix} & 1 \\ p^2 & \end{pmatrix} - y_2  \begin{pmatrix} & -\alpha \\ p^2 \alpha \end{pmatrix}\\
  & = x_1 \cdot 1 + y_1\cdot j + x_2\cdot i - y_2 \cdot k.
  \end{split}
 \end{equation*}
 Conversely, for any $x_i, y_i\in \WW(\FF_{p^2})$ we get an element of $\calO_p$. Thus,
 \[ \calO_p = \WW(\FF_{p^2})\cdot 1 \oplus \WW(\FF_{p^2}) \cdot i \oplus \WW(\FF_{p^2}) \cdot j \oplus \WW(\FF_{p^2}) \cdot k.\]

  Let $I = p^{-1} i, J = j, K = IJ = -JI$. Then $I^2 = 1, J^2 = \alpha^2, K^2 = -\alpha^2$. The module
  \[ R = \WW(\FF_{p^2})[1, I, J, K]\]
  is in fact an order of $M_2(\QQ_{p^2})$ and it has discriminant $1$. It must then be isomorphic to $M_2(\WW_{p^2})$, and, indeed, if we send
  \[ 1 \mapsto  \begin{pmatrix} 1&  \\  & 1\end{pmatrix}, \quad I \mapsto  \begin{pmatrix} 1& \\  & -1 \end{pmatrix}, \quad J
 \mapsto  \begin{pmatrix} & \alpha^2 \\ 1 & \end{pmatrix}, \quad  K \mapsto \begin{pmatrix} & \alpha^2 \\ -1 & \end{pmatrix}\]
 we get the isomorphism $R \cong M_2(\WW(\FF_{p^2}))$. Under this isomorphism $\calO_p$ is mapped isomorphically to the order spanned over $\WW(\FF_{p^2})$ by the matrices $\begin{pmatrix} 1&  \\  & 1\end{pmatrix}, \begin{pmatrix} p& \\  & -p \end{pmatrix}, \begin{pmatrix} & \alpha^2 \\ 1 & \end{pmatrix}, \begin{pmatrix} & p\alpha^2 \\ -p & \end{pmatrix}$, which can be described as
 \[ \left\{ \begin{pmatrix} a & b \\ c & d\end{pmatrix} : a, b, c, d \in \WW(\FF_{p^2}), p\vert(a-d), p\vert(b - \alpha^2c) \right\}.\]
 Now conjugate $\calO_p$ by the matrix $A =  \begin{pmatrix} 1 & \alpha \\ \alpha^{-1} & -1\end{pmatrix}$. Using
 \[ 2 A^{-1}  \begin{pmatrix} a & b \\ c & d\end{pmatrix}  A =
 \begin{pmatrix} a + \alpha^{-1} b + \alpha c + d  & \alpha(a-d) + (\alpha^2c - b) \\
\alpha^{-1}(a - d) + \alpha^{-2}(b - \alpha^2c) & a - \alpha^{-1}b - \alpha c + d
 \end{pmatrix} ,\]
 we find that $\calO_p$ is conjugate to a suborder of
 \[  R' = \begin{pmatrix} \WW(\FF_{p^2}) & p \WW(\FF_{p^2})\\ p\WW(\FF_{p^2}) & \WW(\FF_{p^2})\end{pmatrix}.\]
 However, comparing the discriminant of $\calO_p$, which is $p^2$, and of $R'$ which is $p^2$ as well, we conclude that $\calO_p$ is isomorphic to $R'$. Further conjugation by the matrix $ \begin{pmatrix}  & 1/p\\ 1 &\end{pmatrix}$ shows that $\calO_p$ is isomorphic to the order
 \[ R'' = \left\{  \begin{pmatrix} a & b\\ c &d\end{pmatrix}: a, b, c, d \in \WW(\FF_{p^2}), p^2 \vert c\right\}, \]
 which is an Eichler order of discriminant $p^2$.

   \

  \id {\bf The case $p = 2$.\quad}  We may find $\alpha\in \WW(\FF_{p^2})$ such that $\alpha^{\sigma^2} = -\alpha$ and $\WW(\FF_{p^4}) =  \WW(\FF_{p^2})[\frac{1+\alpha}{2}]$. Indeed, for a suitable $\epsilon\in \WW(\FF_{p^2})^\times$ we have $\WW(\FF_{p^4}) =  \WW(\FF_{p^2})[\beta]$, where $\beta^2+\beta + \epsilon = 0$. Note that $\beta$ is a unit. Take $\alpha = -(2\beta+ 1)$.

 To make the analogy with the previous case more visible, we keep using $p$ instead of $2$ in most places.  As before, we let
  \[ i = \begin{pmatrix} & 1 \\ p^2 & \end{pmatrix}, \qquad j = \begin{pmatrix} \alpha & \\ & -\alpha\end{pmatrix}, \qquad k = ij = -ji = \begin{pmatrix} & -\alpha  \\  \alpha p^2 & \end{pmatrix}. \]
  Writing $m_1 = x_1 + y_1(1+\alpha)/2, m_2 = x_2 + y_2(1+\alpha)/2$ with $x_i, y_i \in \WW(\FF_{p^2})$ we can write,
\[ \begin{pmatrix} m_{11} & m_{12} \\ p^2m_{12}^{\sigma^2} & m_{11}^{\sigma^2} \end{pmatrix} = x_1\cdot 1 + y_1\cdot\frac{1+j}{2} + x_2\cdot i + y_2\cdot\frac{i-k}{2},\]
and one concludes that
\[ \calO_p = \WW(\FF_{p^2})\cdot 1 \oplus  \WW(\FF_{p^2})\cdot i  \oplus  \WW(\FF_{p^2})\cdot \frac{1+j}{2}  \oplus  \WW(\FF_{p^2})\cdot \frac{i-k}{2}.\]
One can verify directly that the right hand side is indeed an order and its discriminant is $p^2$.

The order $\calO_p$ contains the order 
$\WW(\FF_{p^2}) [1, i, j, k] = \WW(\FF_{p^2}) [1, I, J, K]$, 
where $I = i, J = j/\alpha, K = k/\alpha$. 
Note that $I^2 = p^2, J^2 = 1, K^2 = -p^2, IJ = -JI = K$. 
Consider the linear map 
$$\WW(\FF_{p^2}) [1, I, J, K] \arr M_2(\WW(\FF_{p^2}))$$ 
determined by \[1 \mapsto \begin{pmatrix} 1 & \\ & 1\end{pmatrix}, \quad I  \mapsto \begin{pmatrix}  & 2\\ 2& \end{pmatrix}, \quad J  \mapsto \begin{pmatrix} 1 & \\ & -1\end{pmatrix}, \quad K  \mapsto \begin{pmatrix} & -2\\ 2& \end{pmatrix}.\] One checks that this map is a ring homomorphism and verifies that
\[ \calO_p \cong \WW(\FF_{p^2}) \left[\begin{pmatrix} 1 & \\ & 1\end{pmatrix}, \begin{pmatrix} (1+\alpha)/2 & \\ & (1-\alpha)/2\end{pmatrix}, 2\begin{pmatrix}  & 1\\ 1& \end{pmatrix}, 2\begin{pmatrix}  &(1+\alpha)/2  \\  (1-\alpha)/2& \end{pmatrix}\right].\]
Let $u = (1+\alpha)/(1 - \alpha) = \beta^2/\epsilon$. Then $u$ is a unit and $1-u = 2 + u/\beta$ is a unit as well. It follows that,
\[ \calO_p \cong \WW(\FF_{p^2}) \left[\begin{pmatrix} 1 & 0\\ 0&0 \end{pmatrix}, \begin{pmatrix} 0 &0 \\0 & 1\end{pmatrix}, 2\begin{pmatrix}  0& 1\\ 0&0 \end{pmatrix}, 2\begin{pmatrix}  0&0 \\  1&0 \end{pmatrix}\right] = \left\{ \begin{pmatrix} a & b \\ c & d\end{pmatrix} :a, b, c, d\in \WW(\FF_{p^2}), p\vert b, p \vert c\right\}.\]
An additional conjugation as in the case $p\neq 2$ shows that this is an Eichler order of discriminant $p^2$.
\end{proof}

%%%%%%%%%%%%%%%%%%new version
\section{A crude version of Gross-Zagier's result on singular moduli} \label{sec:bound}

\

\id Let $A$ be a $g$-dimensional abelian variety over a field $k$. Let $L$ be a totally real field of degree $g$ over $\QQ$ of strict class number one, and let $K_i$, $i=1, 2$, be two CM fields contained in some algebraic closure of $L$ such that $K_1^+ = K_2^+ = L$. We allow $K_1 = K_2$. Assume we are given two embeddings,
\[ \varphi_i: K_i \arr \End^0_k(A):= \End_k(A) \otimes_\ZZ \QQ, \]
such that
\[ \varphi_1\vert_L = \varphi_2\vert_L, \qquad \varphi_1(K) \neq \varphi_2(K).\]
\begin{lem} \label{mustbessing} The field $k$ has positive characteristic $p$. The abelian variety is supersingular and $\End^0(A) \cong B_{p, L}$, where
$B_{p, L}  = B_{p, \infty} \otimes_\QQ L$ and $B_{p, \infty}$ is ``the" quaternion algebra over $\QQ$ ramified at $p$ and $\infty$.
\end{lem}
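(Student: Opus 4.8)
The plan is to study the commutant of $L$ inside the rational endomorphism algebra and to pit the two CM structures against each other, invoking the classification of \cite[Lemma 6]{Chai} (Albert's classification specialized to CM abelian varieties) exactly as in the proof of Proposition~\ref{superspecialsplitting}. Write $\varphi := \varphi_1\vert_L = \varphi_2\vert_L$. Since $[L:\QQ] = g = \dim A$, the map $\varphi$ equips $A$ with real multiplication by $L$, while each $\varphi_i$ exhibits $A$ as an abelian variety with complex multiplication by $K_i$ (because $[K_i:\QQ] = 2g$). As base change $\End^0_k(A) \hookrightarrow \End^0_{\bar k}(A_{\bar k})$ is injective and preserves the inequality $\varphi_1(K_1)\neq\varphi_2(K_2)$, I may assume $k$ is algebraically closed; the characteristic and the supersingularity of $A$ are insensitive to this, and the conclusion will concern $\End^0_L(A)$, the commutant of $\varphi(L)$.

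Set $C := \End^0_L(A)$. Since $K_i$ is commutative and contains $L$, each $\varphi_i(K_i)$ commutes with $\varphi(L)$, so $C$ is an $L$-algebra containing the two subfields $\varphi_1(K_1)$ and $\varphi_2(K_2)$, each of degree $2$ over $\varphi(L)\cong L$, and these two subfields are distinct by hypothesis.

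Now I apply \cite[Lemma 6]{Chai}: because $A$ has complex multiplication, $C$ falls under Albert types III(a) or IV, i.e.\ $C$ is either a totally definite quaternion algebra over $L$ or a CM field, and in characteristic $0$ only the CM-field case occurs. In the CM-field case $C =: M$ satisfies $[M:L] = 2$ — concretely $M = L\otimes_{L_0} K_0$ in the notation of \emph{loc.\ cit.}, where $[K_0:L_0]=2$ — so both of the degree-$2$ subfields $\varphi_1(K_1),\varphi_2(K_2)$ must equal $M$, forcing $\varphi_1(K_1)=\varphi_2(K_2)$ and contradicting the hypothesis. Hence $C$ is \emph{not} a CM field. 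This immediately rules out characteristic $0$, so $\car k = p > 0$, and by the dichotomy $A$ is supersingular with $C$ a totally definite quaternion algebra over $L$.

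It remains only to identify this algebra, and here I again use the precise form of \cite[Lemma 6]{Chai} already invoked in Proposition~\ref{superspecialsplitting}: for a supersingular abelian variety with RM by $L$ one has $\End^0_L(A) \cong B_{p, \infty}\otimes_\QQ L = B_{p,L}$, which is the asserted isomorphism. The step requiring the most care is the commutative case: one must be sure that $C$, a priori only an $L$-algebra, is forced to be a field of degree \emph{exactly} $2$ over $L$, so that the two distinct quadratic CM subfields genuinely collide; this is exactly what the structure $C = L\otimes_{L_0}K_0$ from \cite[Lemma 6]{Chai} guarantees. Once supersingularity is established, no separate ramification computation is needed, since Chai's lemma delivers the algebra $B_{p,L}$ directly.
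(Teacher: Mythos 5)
Your proof is correct and takes essentially the same route as the paper: both arguments apply \cite[Lemma 6]{Chai} to the centralizer $\End^0_L(A)$, observe that it contains the two distinct quadratic subfields $\varphi_1(K_1)$ and $\varphi_2(K_2)$ and hence cannot be a CM field of degree $2$ over $L$, and conclude positive characteristic, supersingularity, and $\End^0_L(A)\cong B_{p,L}$. The paper compresses this to the remark that the centralizer has $L$-dimension greater than $2$, which is exactly your ``collision'' argument in the CM-field case spelled out in detail.
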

\begin{proof} This follows easily from the classification of the endomorphism algebras of abelian varieties with real multiplication as in \cite[Lemma 6]{Chai}; one observes that under our assumptions the centralizer of $L$ in $\End^0_k(A)$ is an $L$-vector space of dimension greater than $2$.
\end{proof}

\id Let $\calO_i \subseteq K_i$ be orders containing $\ol$. The order $\calO_i$ is determined by its conductor $\gerc_i$, which is an integral ideal of $\ol$ for which we choose a generator $c_i$ (see \cite[Lemma 4.1]{GorenLauterDistance}). In fact, one can write \[\calO_{K_i} = \ol[\kappa_i],\] where $\kappa_i$ satisfies a quadratic equation $x^2 + B_ix + C_i$, $B_i, C_i \in \ol$, and $-m_i = B_i^2 - 4 C_i$ is a totally negative element of $\ol$. The relative different ideal $\calD_{K_i/L}$ is equal to $\calO_{K_i}[1/\sqrt{-m_i}]$ (\cite[Lemma 3.1]{GL2}). We have $\calO_{K_i} = \ol[\kappa_i] \supseteq \ol[\sqrt{-m_i}] \supseteq \ol[2\kappa_i]$, and so \[\calO_i = \ol[c_i\kappa_i] \supseteq \ol[c_i\sqrt{-m_i}] \supseteq \ol[2c_i\kappa_i].\] The discriminant of $\calO_i$ relative to $\ol$, $\disc_{K_i/L}(\calO_i)$, is equal to the $\ol$-ideal generated by $c_i^2m_i$ and the discriminant of $\calO_i$ relative to $\ZZ$, $\disc(\calO_i) = \disc_{K/\QQ}(\calO_i)$, is equal to $\Norm_{L/\QQ}(c_i^2m_i)\cdot \disc(\ol)^2$. (In general, we use ``$\disc$" to denote absolute discriminant, that is, relative to $\ZZ$.)

\

Let $B$ be any totally definite quaternion algebra over $L$, that is $B\otimes_{L, \sigma}\RR$ is a division algebra for any embedding $\sigma\colon L \arr \RR$,  and let $\gerd$ be its discriminant. Let
\[\varphi_i: K_i \arr B,\]
be two embeddings such that $\varphi_1\vert_L = \varphi_2\vert_L$ and $\varphi_1(K_1) \neq \varphi_2(K_2)$. Let
\[ k_i = \varphi_i(c_i \sqrt{-m_i}).\]
Let $\calO$ be an order of $B$, which we assume to contain $\varphi_i(\calO_i)$, $i= 1, 2$, and hence also $\ol$ (we view $\varphi_i$ as the identity maps on $L$). Let $\gerd^+$ be the discriminant of $\calO$. As in \cite{GL1}, subject to the assumption $\varphi_1(K_1) \neq \varphi_2(K_2)$, one proves the following lemma.

\begin{lem} The $\ol$ module $\Lambda = \ol + \ol k_1 + \ol k_2 + \ol k_1k_2 $ has finite index in $\calO$ and is in fact a direct sum, $\Lambda = \ol \oplus \ol k_1 \oplus \ol k_2 \oplus \ol k_1k_2 $ .
\end{lem}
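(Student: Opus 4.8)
The plan is to reduce everything to a single $L$-linear independence statement: once $\{1, k_1, k_2, k_1k_2\}$ is shown to be linearly independent over $L$, the module $\Lambda = \ol + \ol k_1 + \ol k_2 + \ol k_1k_2$ is automatically the internal direct sum $\ol \oplus \ol k_1 \oplus \ol k_2 \oplus \ol k_1k_2$, and finite index follows for free: we have $\Lambda \subseteq \calO$ because $k_i = \varphi_i(c_i\sqrt{-m_i}) \in \varphi_i(\ol[c_i\sqrt{-m_i}]) \subseteq \varphi_i(\calO_i) \subseteq \calO$ and $\calO$ is a ring, while both $\Lambda$ and $\calO$ are full $\ol$-lattices of rank $4$ spanning the four-dimensional $L$-algebra $B$, so $[\calO:\Lambda] < \infty$. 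Thus the entire content is the independence of $\{1,k_1,k_2,k_1k_2\}$.

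First I would record the preliminaries. Since $B$ is totally definite it is ramified at every infinite place of $L$ and hence is a \emph{division algebra}; this will be used in an essential way. Each $k_i$ has reduced trace $\Tr_{K_i/L}(c_i\sqrt{-m_i}) = 0$, so $\overline{k_i} = -k_i$ and $k_i^2 = -c_i^2 m_i \in L$ is totally negative. In particular $k_i$ is invertible and $k_i \notin L$ (an element of the totally real field $L$ cannot have totally negative square), so $L(k_i) = \varphi_i(K_i)$ is a quadratic field extension of $L$ sitting inside $B$, whose centralizer in $B$ is exactly $L(k_i)$. Next I would establish that $\{1, k_1, k_2\}$ is independent: if $k_2 \in L(k_1)$ then $\varphi_2(K_2) = L(k_2) \subseteq L(k_1) = \varphi_1(K_1)$, and comparing $L$-dimensions forces equality, contradicting $\varphi_1(K_1) \neq \varphi_2(K_2)$. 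The same hypothesis shows $k_2$ does not commute with $k_1$, since otherwise $k_2$ would lie in the centralizer $L(k_1)$.

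The heart of the argument is then $k_1k_2 \notin \Span_L\{1,k_1,k_2\}$. Conjugation by $k_1$ is an involution of $B$ (as $k_1^2 \in L^\times$), with $+1$-eigenspace $L(k_1)$ and $-1$-eigenspace $V = \{x : xk_1 = -k_1x\}$, which is two-dimensional and lies inside the space $B^0$ of trace-zero elements (for $x\in V$, $\Trd(x) = \Trd(k_1^{-1}xk_1) = -\Trd(x)$, so $\Trd(x)=0$). I would decompose $k_2 = \mu k_1 + r$ along $B^0 = Lk_1 \oplus V$, with $\mu\in L$ and $0\neq r\in V$ (here $r\neq 0$ precisely because $k_2\notin L(k_1)$). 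A one-line computation using $rk_1 = -k_1r$ gives $k_1k_2 - k_2k_1 = 2k_1r$, and $k_1r$ again anticommutes with $k_1$, hence $k_1r \in V$. Since $\Span_L\{1,k_1,k_2,k_1k_2\} = \Span_L\{1,k_1,r,k_1r\}$, it suffices to show $k_1r \notin \Span_L\{k_1,r\}$: writing $k_1r = \alpha k_1 + \beta r$ and comparing components along $B^0 = Lk_1 \oplus V$ forces $\alpha = 0$, hence $(k_1-\beta)r = 0$, hence $k_1 = \beta \in L$ by the absence of zero divisors, contradicting $k_1\notin L$.

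The step I expect to be the main obstacle is exactly this last one. It is where both the noncommutativity forced by $\varphi_1(K_1)\neq\varphi_2(K_2)$ and the division-algebra property of $B$ are indispensable; indeed the analogous statement is \emph{false} in $M_2(L)$ (one can take trace-zero $x,y$ with $\{1,x,y\}$ independent but $xy = y$), so any correct proof must invoke the fact that $B$ has no zero divisors. A more computational alternative would be to evaluate the Gram determinant of $\{1,k_1,k_2,k_1k_2\}$ with respect to the reduced-trace form and check it is nonzero, but this is messier and still ultimately rests on the same division hypothesis, so I prefer the structural decomposition above.
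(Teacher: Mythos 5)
Your proof is correct, but it is worth knowing that the paper itself gives no argument here: the lemma is quoted from \cite{GL1}, and your route differs from the one there. Your reduction of everything to the $L$-linear independence of $\{1,k_1,k_2,k_1k_2\}$, the finite-index argument, and the decompositions $B = L(k_1)\oplus V$ and $B^0 = Lk_1\oplus V$ under conjugation by $k_1$ are all sound (the passage from $k_1r\notin\Span_L\{k_1,r\}$ to $k_1r\notin\Span_L\{1,k_1,r\}$ is justified because $k_1r\in V\subseteq B^0$ and $B = L\oplus B^0$). The argument in \cite{GL1} is instead a substitution trick: if $k_1k_2 = a + bk_1 + ck_2$ with $a,b,c\in L$, left-multiply by $k_1$ and use $k_1^2\in L$ to get $(k_1^2 - c^2)\,k_2 = (bk_1^2 + ca) + (a+bc)k_1$; if $k_1^2\neq c^2$ this forces $k_2\in L + Lk_1 = \varphi_1(K_1)$, contradicting $\varphi_1(K_1)\neq\varphi_2(K_2)$, while $k_1^2 = c^2$ is impossible because $k_1^2$ is a totally negative element of the totally real field $L$ and hence not a square in $L$. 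Note that this argument never uses that $B$ is a division algebra. Consequently, your closing claim that ``any correct proof must invoke the fact that $B$ has no zero divisors'' is false: your $M_2(L)$ counterexample takes $y$ with $y^2 = 0$, which violates the standing hypothesis that $k_i^2 = -c_i^2m_i$ is a totally negative unit times an element of $L^\times$; under the actual hypotheses (both $k_i^2\in L^\times$ totally negative and $L(k_1)\neq L(k_2)$) the independence statement holds in $M_2(L)$ as well, by the substitution argument. So the comparison is: your eigenspace decomposition gives a clean structural proof that exploits total definiteness, whereas the cited argument is a two-line computation that is strictly more general, applying to any quaternion algebra over $L$, split or not.
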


\begin{thm} \label{crudebound} Let $\alpha = {\rm Trd}(k_1k_2)$. Then we have a divisibility of integral ideals in $L$:
\[ \gerd^+ \vert  \left( 4 {\rm Nrd}(k_1){\rm Nrd}(k_2) - \alpha^2 \right)  \qquad (\text{in } \ol).\]
Furthermore,
\[ N_{L/\QQ} (\gerd^+) \leq 4^g \frac{{\disc(\calO_1)}\cdot{\disc(\calO_2)}}{\disc(\ol)^4}.\]
\end{thm}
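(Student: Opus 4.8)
The plan is to exploit the lattice $\Lambda = \ol \oplus \ol k_1 \oplus \ol k_2 \oplus \ol k_1k_2$ from the preceding lemma, computing its discriminant explicitly and comparing it with that of $\calO$. First I would record the reduced invariants of the generators. Since $\sqrt{-m_i}$ is purely imaginary in $K_i$, each $k_i = \varphi_i(c_i\sqrt{-m_i})$ is a trace-zero (pure) quaternion: ${\rm Trd}(k_i)=0$ and ${\rm Nrd}(k_i)=N_{K_i/L}(c_i\sqrt{-m_i})=c_i^2 m_i=:n_i$, a totally positive element of $\ol$. In particular $\bar k_i=-k_i$, so $\alpha = {\rm Trd}(k_1k_2)=k_1k_2+k_2k_1$ and $k_2k_1 = \alpha - k_1k_2$.

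Next I would compute the Gram matrix $G=({\rm Trd}(e_ie_j))$ of the reduced trace form in the basis $e_1=1,\ e_2=k_1,\ e_3=k_2,\ e_4=k_1k_2$. Using $k_i^2=-n_i$ and the commutation relation above, the nonzero entries are ${\rm Trd}(1\cdot 1)=2$, ${\rm Trd}(k_i^2)=-2n_i$, the $(1,4)$ and $(2,3)$ entries equal to $\alpha$, and ${\rm Trd}((k_1k_2)^2)=\alpha^2-2n_1n_2$; all mixed terms such as ${\rm Trd}(k_1\cdot k_1k_2)={\rm Trd}(-n_1k_2)$ vanish, being traces of pure quaternions. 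Reordering the basis to $(1,\,k_1k_2,\,k_1,\,k_2)$ makes $G$ block diagonal, with $2\times 2$ blocks of determinants $\alpha^2-4n_1n_2$ and $4n_1n_2-\alpha^2$, so $\disc(\Lambda)=\det G = -(4n_1n_2-\alpha^2)^2$. Because $\Lambda\subseteq\calO$ has finite index and $\disc(\calO)=(\gerd^+)^2$, the multiplicativity $\disc(\Lambda)=[\calO:\Lambda]^2\disc(\calO)$ of discriminants under inclusion of full $\ol$-lattices yields $(4n_1n_2-\alpha^2)=[\calO:\Lambda]\cdot\gerd^+$ as ideals, whence $\gerd^+ \mid \bigl(4\,{\rm Nrd}(k_1){\rm Nrd}(k_2)-\alpha^2\bigr)$. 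Here $4n_1n_2-\alpha^2\neq 0$ precisely because $\Lambda$ is a genuine direct sum, i.e. because $\varphi_1(K_1)\neq\varphi_2(K_2)$.

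For the norm bound I would pass to the archimedean places. As $B$ is totally definite, for each real embedding $\sigma\colon L\arr\RR$ we have $B\otimes_{L,\sigma}\RR\cong\HH$, and I would view $\sigma(k_1),\sigma(k_2)$ as pure quaternions, i.e. vectors $x_1,x_2\in\RR^3$. Under this identification ${\rm Nrd}$ becomes the squared Euclidean length and ${\rm Trd}(\sigma(k_1)\sigma(k_2))=-2\langle x_1,x_2\rangle$, so
$$\sigma(4n_1n_2-\alpha^2)=4|x_1|^2|x_2|^2-4\langle x_1,x_2\rangle^2 = 4\,|x_1\times x_2|^2\ge 0$$
by the Lagrange identity. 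Thus $4n_1n_2-\alpha^2$ is totally positive, and Cauchy–Schwarz gives $\sigma(4n_1n_2-\alpha^2)\le 4\,\sigma(n_1)\sigma(n_2)$ at every $\sigma$. Taking the product over all $g$ real places,
$$N_{L/\QQ}(\gerd^+)\le N_{L/\QQ}(4n_1n_2-\alpha^2)=\prod_\sigma 4\,|x_1\times x_2|^2 \le 4^g\,N_{L/\QQ}(n_1n_2),$$
where the first inequality holds because $\gerd^+$ divides the nonzero totally positive element $4n_1n_2-\alpha^2$. Finally I would substitute $N_{L/\QQ}(n_i)=N_{L/\QQ}(c_i^2 m_i)=\disc(\calO_i)/\disc(\ol)^2$, the relative-discriminant formula recorded just before the theorem, to rewrite $4^gN_{L/\QQ}(n_1n_2)$ as $4^g\,\disc(\calO_1)\disc(\calO_2)/\disc(\ol)^4$, which is the claimed bound.

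I expect the main obstacle to be bookkeeping rather than substance: fixing the normalization of $\gerd^+$ so that $\disc(\calO)=(\gerd^+)^2$, and being careful that ``index'' means the $\ol$-module index ideal so that the discriminant multiplicativity reads off correctly. The only genuinely geometric input, total positivity together with the Cauchy–Schwarz estimate, is clean once one argues place-by-place in $\HH$; the crucial nondegeneracy $4n_1n_2-\alpha^2\neq 0$ is exactly the hypothesis $\varphi_1(K_1)\neq\varphi_2(K_2)$ feeding through the directness of $\Lambda$.
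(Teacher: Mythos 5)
Your proof is correct and follows essentially the same route as the paper: compute the discriminant of $\Lambda$ from the Gram matrix of the reduced trace form in the basis $1, k_1, k_2, k_1k_2$ to get $\left(4\,{\rm Nrd}(k_1){\rm Nrd}(k_2)-\alpha^2\right)^2$, deduce $\gerd^+ \mid \left(4\,{\rm Nrd}(k_1){\rm Nrd}(k_2)-\alpha^2\right)$ from the inclusion $\Lambda \subseteq \calO$, bound the norm via Cauchy--Schwarz (total positivity) at each real place, and substitute $N_{L/\QQ}({\rm Nrd}(k_i)) = \disc(\calO_i)/\disc(\ol)^2$. The only cosmetic differences are that you use the unconjugated form ${\rm Trd}(xy)$ (hence the sign $-(4n_1n_2-\alpha^2)^2$) where the paper uses ${\rm Trd}(x\bar y)$, and you realize the Cauchy--Schwarz step concretely through the Lagrange identity for pure quaternions in $\HH$.
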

\begin{proof}
The discriminant of the order $\Lambda$ relative to $L$, $\disc_{B/L}(\Lambda)$,  is divisible by the discriminant of $\calO$, namely it is an integral ideal of $L$ divisible by $\gerd^+$. Using the basis $1, k_1, k_2, k_1k_2$ for $\Lambda$, and putting $\alpha = {\rm Trd}(k_1k_2)$, we find that the discriminant of $\Lambda$ is the $\ol$-ideal generated by
\[ \det \begin{pmatrix} 2 & 0 & 0 & \alpha \\ 0 & 2{\rm Nrd}(k_1) & - \alpha & 0 \\ 0 & -\alpha & 2{\rm Nrd}(k_2) & 0
\\ \alpha & 0 & 0 & 2{\rm Nrd}(k_1){\rm Nrd}(k_2)
\end{pmatrix} = \left( 4 {\rm Nrd}(k_1){\rm Nrd}(k_2) - \alpha^2 \right)^2,\]
and so
\[ \gerd^+ \vert  \left( 4 {\rm Nrd}(k_1){\rm Nrd}(k_2) - \alpha^2 \right)  \qquad (\text{in } \ol).\]
Thus,
\[  N_{L/\QQ} (\gerd^+) \vert N_{L/\QQ}  \left( 4 {\rm Nrd}(k_1){\rm Nrd}(k_2) - \alpha^2 \right)   \qquad (\text{in } \ZZ).\]
Now, $4 {\rm Nrd}(k_1){\rm Nrd}(k_2) - \alpha^2$ is a totally positive element of $\ol$. Indeed,
this is just the Cauchy-Schwartz inequality applied to the bilinear form $\Trd(x \bar y)$ under
every embedding $L \arr \RR$. We can therefore conclude that
\[  N_{L/\QQ} (\gerd^+) \leq  N_{L/\QQ}  \left( 4 {\rm Nrd}(k_1){\rm Nrd}(k_2) \right).\]
We conclude that
\begin{equation*}
\begin{split}
N_{L/\QQ} (\gerd^+) & \leq \disc(\ol)^{-4} 4^{-g} \prod_{i=1}^2 4^g \disc(\ol)^2 N_{L/\QQ} {\rm Nrd}(k_i) \\
& \leq \disc(\ol)^{-4} 4^{-g} \prod_{i=1}^2  \disc(\ol[2c_i\kappa_i])\\
& =  \disc(\ol)^{-4} 4^{g} \prod_{i=1}^2  \disc(\ol[c_i\kappa_i])\\
& = 4^g \frac{{\disc(\calO_1)}\cdot{\disc(\calO_2)}}{\disc(\ol)^4}.
\end{split}
\end{equation*}
\end{proof}

\begin{cor} \label{crudeGZ} \begin{enumerate}\item Let $A_i$ be an abelian variety with CM by $\calO_{K_i}$. Choose a common field of definition $M$ for $A_1, A_2$ such that $M$ contains the normal closure of both $K_1$ and $K_2$ and both $A_i$ have good reduction over $M$. Let $\gerp$ be a prime ideal of $M$, $(p) = \gerp \cap \ZZ$, and suppose that
\[ A_1 \pmod{\gerp} \cong  A_2 \pmod{\gerp}.\]
Let $r$ be the number of prime ideals $\gerq$ in $\ol$ for which $e(\gerq/p)f(\gerq/p)$ is odd. If $r>0$ then
\[ p \leq \left(4^g  \frac{{\disc_{K_1}}\cdot{\disc_{K_2}}}{\disc(\ol)^4}\right)^{1/r}.\]

\item
Suppose that $[L:\QQ] = 2$, i.e., and that $A_i$ are principally polarized abelian surfaces. Then we have the bound
\[ p \leq \left(16 \frac{{\disc_{K_1}}\cdot{\disc_{K_2}}}{\disc(\ol)^4}\right)^{1/r^\prime},\] according to the following cases (and no other case is possible), where the last columns refer to tables in \cite{GLdenominators}. The first column refers to the decomposition of $p$ in $L$. We use ``s.sing." and ``ssp" to refer to ``supersingular" and ``superspecial", respectively.

\begin{table}[h]
\begin{tabular}{|p{2cm}| p{1.3cm}|p{1.4cm}|p{.4cm}|p{2cm}|p{2.4cm}|p{2.6cm}|}\hline
$p$ & Reduc-tion & Rapoport? &  $r^\prime$ & Table 3\newline ($K$ cyclic) & Table 4\newline ($K$ biquadratic) & Table 5 \newline ($K$ non-Galois)\\
\hline\hline
Unramified (inert/split)& ssp & Yes & 2 &  ii, iv, v & iii, iv, vii, viii & iii, vi, viii, ix, x, xi, xiii, xv, xxii, xxiii
\\ \hline
Inert & s.sing \& not ssp & Yes & 4 &iii & -- & vii
\\ \hline
Ramified & ssp & Yes & 2 &  --& vi & -- 
\\ \hline
Ramified & ssp & No & 1 & vi & ix, x, xi & xvi, xvii, xviii, xix, xx, xxi, xxiv, xxv, xxvi
%\\ \hline
%Ramified & s.sing & Yes & 1 & --  & -- & -- 
\\
\hline
\end{tabular}
\caption{The case $[L:\QQ]=2$.}

\end{table}
\end{enumerate}

\end{cor}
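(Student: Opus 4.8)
The plan is to derive both parts from Theorem~\ref{crudebound} together with the structural description of $\End_\ol$ established earlier. For part (1), set $\bar A := A_1 \pmod{\gerp} \cong A_2 \pmod{\gerp}$. Reduction of endomorphisms transports the two CM structures to embeddings $\varphi_i \colon K_i \arr \End^0(\bar A)$, and after adjusting by Skolem-Noether I may assume $\varphi_1\vert_L = \varphi_2\vert_L$ while $\varphi_1(K_1)\neq\varphi_2(K_2)$ (the latter because $A_1,A_2$ are not isomorphic as CM abelian varieties, even when $K_1 = K_2$). Lemma~\ref{mustbessing} then gives that $\bar A$ is supersingular with $\End^0(\bar A)\cong B_{p,L}$. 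I would take $\calO_i = \calO_{K_i}$, so that $c_i = 1$ and $\disc(\calO_i)=\disc_{K_i}$, and $\calO = \End_\ol(\bar A)$, which contains each $\varphi_i(\calO_{K_i})$; applying Theorem~\ref{crudebound} yields $N_{L/\QQ}(\gerd^+)\le 4^g\, \disc_{K_1}\disc_{K_2}/\disc(\ol)^4$, where $\gerd^+ = \disc(\calO)$.

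Next I would supply a matching lower bound. Since $\calO\supseteq\ol$ is an order in $B_{p,L}$, its reduced discriminant $\gerd^+$ is divisible by the algebra discriminant $\gerd$ of $B_{p,L}$, namely the product of the finite ramified primes. Base change multiplies the local invariant $\tfrac{1}{2}$ at $p$ by $[L_\gerq:\QQ_p]=e(\gerq/p)f(\gerq/p)$, so $B_{p,L}$ is ramified at $\gerq\mid p$ exactly when $e(\gerq/p)f(\gerq/p)$ is odd; there are precisely $r$ such primes. Each satisfies $N_{L/\QQ}(\gerq)=p^{f(\gerq/p)}\ge p$, whence $N_{L/\QQ}(\gerd^+)\ge N_{L/\QQ}(\gerd)\ge p^{r}$. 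Combining the two estimates gives $p^{r}\le 4^g\,\disc_{K_1}\disc_{K_2}/\disc(\ol)^4$, and taking $r$-th roots (legitimate since $r>0$) produces the stated bound.

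For part (2) I would run the identical argument with $g=2$ and $4^g=16$, but replace the crude $p^{r}$ by the exact value $N_{L/\QQ}(\gerd^+)=p^{r'}$, reading off $r' = v_p\bigl(N_{L/\QQ}(\gerd^+)\bigr)$ case by case from the precise shape of $\calO=\End_\ol(\bar A)$. In the superspecial cases $\calO$ is an Eichler order of level $p\ol$ (Nicole's theorem, refined by Theorem~\ref{classification}), so $\gerd^+=p\ol$ and $N_{L/\QQ}(p\ol)=p^{2}$ whether $p$ is unramified or ramified in $L$, giving $r'=2$ (rows one and three). In the supersingular but not superspecial case $p$ is inert in $L$ and the theorem of Section~\ref{supersingularorders} shows $\calO$ is an Eichler order with $\gerd^+=\gerq^2$, $N_{L/\QQ}(\gerq^2)=p^{4}$, so $r'=4$. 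In the ramified non-Rapoport case the local computation (in \cite{GLdenominators}) gives an Eichler order of level $\gerq$ with $N_{L/\QQ}(\gerq)=p$, so $r'=1$. The determination of which decomposition/reduction/Rapoport combinations actually occur, and the pointers to the rows of Tables 3--5, comes from the classification of reductions of CM abelian surfaces in \cite{GLdenominators} together with the results of C.-F. Yu \cite{Yu}; this is exactly what the table records.

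The inequality itself is a one-line consequence of Theorem~\ref{crudebound} once $\gerd^+$ is in hand, so the main obstacle is the bookkeeping in part (2): correctly identifying $\gerd^+$, and hence $r'=v_p(N_{L/\QQ}(\gerd^+))$, in every reduction type, and verifying the completeness claim ``and no other case is possible''. This requires matching each entry of the classification of \cite{GLdenominators} to the corresponding endomorphism order computed here --- superspecial of Eichler level $p\ol$; inert supersingular with norm-discriminant $p^4$; ramified Rapoport versus non-Rapoport --- and checking that the listed four combinations exhaust all possibilities for $[L:\QQ]=2$.
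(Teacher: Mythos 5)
Your proposal follows the paper's proof in essentially every respect: part (1) is Theorem~\ref{crudebound} plus the lower bound $N_{L/\QQ}(\gerd^+)\ge p^{r}$ coming from the $r$ primes above $p$ at which $B_{p,L}$ ramifies (the paper never writes this out, but it is plainly the intended argument, and your ramification criterion $e(\gerq/p)f(\gerq/p)$ odd is the right one); part (2) is the same case analysis with the same discriminants --- $p\ol$ for superspecial reduction, $p^{2}\ol$ for inert supersingular-not-superspecial reduction (the theorem of Section~\ref{supersingularorders}), and $\gerp$ for ramified superspecial non-Rapoport reduction --- with the list of occurring cases read off from \cite{GLdenominators}. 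The only substantive difference is that the paper also disposes of the ramified, supersingular-but-not-superspecial possibility by an $\alpha_p$-quotient argument ($\End(A)\injects\End(A/\alpha_p)$ with $A/\alpha_p$ superspecial non-Rapoport), which you fold into the deferred completeness check; that is acceptable since the completeness claim rests on the cited classification either way.

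One step of your write-up is justified by the wrong mechanism, though the conclusion you need is correct. You cannot get $\varphi_1\vert_L=\varphi_2\vert_L$ ``by Skolem--Noether'': conjugating $\varphi_2$ by a unit of $\End^0(\overline{A})$ moves $\varphi_2(\calO_{K_2})$ into a conjugate of $\End(\overline{A})$, so afterwards there is no single order $\calO=\End_\ol(\overline{A})$ containing both images, which is exactly what Theorem~\ref{crudebound} requires (indeed, if the two $L$-actions differ, $\End_\ol(\overline{A})$ is not even well defined). The agreement on $L$ must instead be read into the hypothesis, as the paper implicitly does throughout (Sections~\ref{sec:geometry} and~\ref{sec: moduli}): the isomorphism of reductions is an isomorphism of abelian varieties with RM, so the two copies of $\ol$ coincide on the nose. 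Similarly, $\varphi_1(K_1)\neq\varphi_2(K_2)$ is automatic when $K_1\neq K_2$ (an equality of images would yield an $L$-isomorphism $K_1\cong K_2$), but when $K_1=K_2$ it is an additional hypothesis --- without it, $A_1=A_2$ is a counterexample to the stated bound --- and your parenthetical ``because $A_1,A_2$ are not isomorphic as CM abelian varieties'' is really the tacit assumption inherited from the setup at the start of Section~\ref{sec:bound} (justifiable via rigidity of CM lifts, in the spirit of Theorem~\ref{thm: endomorphisms mod p to the n}, rather than something to assert in passing). With those two readings made explicit, your argument is the paper's.
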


\begin{proof} Since the $A_i$ are principally polarized abelian surfaces, they satisfy the Deligne-Pappas condition and, when $p$ is unramified, even the Rapoport condition. We can therefore use the results of \cite{BG, Nicole}.

If $p$ is split in $L$ then every supersingular point is superspecial. In that case, $\End_\ol(A)$ is an order of discriminant $p\ol$ in $B_{p, L}$ and we apply part $(1)$ with $r=2$.

If $p$ is inert, then the reduction is necessarily supersingular, by Lemma~\ref{mustbessing}, and may or may not be superspecial. If it is superspecial, then, again, $\End_\ol(A)$ is an order of discriminant $p\ol$ in $B_{p, L}$ and the bound holds with $r^\prime = 2$. 

If the reduction is supersingular and not superspecial, then in fact $\End_\ol(A)$ has discriminant $p^2\ol$, and so we may take $r^\prime = 4$.

Next we consider the case when $p$ is ramified. There are three case to consider. The first is when we have superspecial reduction and the Rapoport condition holds. In that case, $\End_{\ol}(A)$ has discriminant $p\ol$, and we may take $r^\prime = 2$. The second case is when we have superspecial reduction and the Rapoport condition does not hold (but the Deligne-Pappas condition holds). In this case, $\End_{\ol}(A)$ has discriminant $\gerp$, where $\gerp$ is the prime of $\ol$ above $p$ and we can take $r^\prime=1$. The last possibility is, ostensibly, that we have supersingular reduction, which is not superspecial. This in fact never happens in the presence of CM by the full ring of integers. It is interesting to note, though, that for supersingular and not superspecial reduction,
the abelian variety $A$ has a unique copy of the group scheme $\alpha_p$ contained in it, which is therefore preserved under all endomorphisms. Thus, $\End(A) \injects \End(A/\alpha_p)$ and $A/\alpha_p$ is superspecial, but doesn't satisfy the Rapoport condition (see \cite{AG}).
And so, were this case to occur, we could have taken $r^\prime = 1$. \end{proof}

\begin{rmk} Suppose that $r=0$ then $g$ is even and a maximal order $R\subset B_{p, L}$ has discriminant $1$. For every prime $p$ (and for any decomposition behaviour of $p$), there certainly exist supersingular abelian varieties $A$ with RM such that $\End_{\ol}(A) = R$. This is easily achieved by choosing an $R$-stable lattice of the Dieudonn\'e module of $A$. Experience shows, however, that such abelian varieties tend to be badly behaved, for example, the Deligne-Pappas condition tends to fail when $p$ is unramified, (it fails in the cases we have checked and we did not find an example where it holds) 
or, in other cases, such as when $p$ is totally ramified, the Deligne-Pappas condition holds but the endomorphism ring is not the maximal order.
Thus, one would expect that under the Deligne-Pappas condition the discriminant of $\End_{\ol}(A)$ is never $1$, and, if so, one obtains a version of part (1) of Corollary~\ref{crudeGZ}, in all cases.

In fact, one can be more optimistic and guess that the largest order $\calO$ arising for a supersingular characteristic $p$ abelian variety with RM $A$, satisfying the Deligne-Rapoport condition, also arises for some superspecial such abelian variety. Superspecial abelian varieties with RM were studied by Nicole \cite{Nicole,NicoleJNT}. When $p$ is unramified in $L$ and $A$ is superspecial, $\End_{\ol}(A)$ has discriminant $p\ol$. When $p$ is ramified in $L$, larger orders arise (see \cite[Theorem 2.8.5]{Nicole}), but at least when $p$ is totally ramified, $p\ol = \gerp^{[L:\QQ]}$, still the largest order arising (for a superspecial abelian variety) has discriminant $\gerp$.  
\end{rmk}

%%%%%%%%%%%%%%%%%%

\section{Computations: $g=2$} \label{sec:example}

Consider the two primitive Galois quartic CM fields $K=\QQ(\zeta_5)$ and
$K^\prime=\QQ(\sqrt{-85+34\sqrt{5}})$.
The common real quadratic subfield $L = K^+= {K^\prime}^+=\QQ(\sqrt{5})$ has strict class number one, as it has class number one and a unit $(1+\sqrt{5})/2$ of negative norm.
The field $K$ has class number $1$ and the triple of absolute Igusa invariants
of the principally polarized abelian surface with CM by $K$ is $i_1 = i_2 = i_3 =0$.
The field $K^\prime$ has class number $2$ and
the triple of absolute Igusa invariants for one of the CM points associated to $K^\prime$ is:
\begin{center}
$$i_1 = \frac{2^{33} \cdot 3^{10} \cdot 5^5 \cdot 19^5 \cdot 521^5}{71^{12}}, \;
i_2 = \frac{2^{23} \cdot 3^{10} \cdot 5^5 \cdot 19^5 \cdot 521^3}{71^{8}}, \;
i_3 = \frac{2^{16} \cdot 3^{7} \cdot 5^4 \cdot 19^3 \cdot 521^2 \cdot 755777339}{71^{8}}.$$
\end{center}
Genus $2$ curves over $\QQ$ with these invariants are given by the affine models:
$$y^2 =x^5 -1$$ for $\QQ(\zeta_5)$, and
$$ y^2 = -584x^6 - 4020x^5 + 28860x^4 + 130240x^3 - 514920x^2 - 190244x-289455,$$
for $K^\prime$.
In this case, the triple of absolute invariants is insufficient to determine whether 
the two curves are isomorphic modulo a prime $p$, since the first invariant is zero.
To understand for which primes the curves are isomorphic, it is necessary to compute all ten 
Igusa invariants for the CM point associated to $K^\prime$ to determine which primes divide all ten invariants (see~\cite[Section 2.2]{GLdenominators} for an explanation, especially consequence 3 at the end of the subsection).
In particular, primes which divide the differences of all ten Igusa invariants associated to two CM points of $K$ and $K^\prime$ are primes for which the {\it coincidence number} of $K$ and $K^\prime$ defined in Section~\ref{sec: moduli} is non-zero. 

The prime $19$ appears in all three invariants and checking all ten invariants, we find that they too are all zero modulo $19$.
There is also a positive contribution at the prime $p=19$ in our formula in Theorem~\ref{fixedCM}, which implies a non-zero coincidence number.
Since $K$ has class number $1$, there is only one superspecial order $R(\calO,\lambda)$.  We find
an element $x \in \ol$ satisfying condition {\bf C} and count the elements in $S_2(\calO,x)$.
Let $d$ and $d^\prime$ be as in Section~\ref{maincounting}.
We find that for $x= 3\sqrt{5} - 3$, the ideal in $\calO_L$ generated by $(x^2-d d^\prime)/4$ factors as:
\begin{center}
$\gerp_2^2 \gerp_{19,1} \gerp_{19, 2}.$
\end{center}
 We see that there is a positive contribution for $p=19$  in our formula because this factorization has both split factors for $19$, and $2$ is totally inert in $K/L$ but appears to the power $2$, so $(x^2-d d^\prime)/(4 \cdot 19)$ is a norm of an ideal from $K/L$ and the set $S_2(\calO,x)$ is non-empty.

Consider the other primes which are common to all three numerators in this example: $5$ is a ramified prime in $L$, so our results do not cover it; neither do our formulas pertain to the prime $2$ which also appears in all three numerators; the prime $3$ divides all ten invariants but is supersingular, not superspecial, and it certainly satisfies the crude bound Theorem~\ref{crudebound} from Section~\ref{sec:bound}; the prime $521$ does not divide all ten invariants.


\begin{thebibliography}{99}

\bibitem[AG]{AG} F. Andreatta and E. Z. Goren: Geometry of Hilbert modular varieties over totally ramified primes. Internat. Math. Res. Notices 33 (2003), 1785-1835.

 \bibitem[BG]{BG} Bachmat, E.; Goren, E. Z.: On the non ordinary locus in Hilbert-Blumenthal surfaces. Math. Annalen, 313 (1999) 3, 475-506.

\bibitem[Cha]{Chai} Chai, Ching-Li: Every ordinary symplectic isogeny class in positive characteristic is dense in the moduli. Invent. Math. 121 (1995), no. 3, 439--479.

\bibitem[CGL1]{CGL1} Charles, D. X.; Goren, E. Z.; Lauter, K. E.:
Cryptographic hash functions from expander graphs.
J. Cryptology, volume 22, no. 1, (2009) 93--113.

\bibitem[CGL2]{CGL2} Charles, D. X.; Goren, E. Z.; Lauter, K. E.: 
Families of Ramanujan graphs and quaternion algebras. Groups and symmetries, Special AMS-CRM volume in honor of John McKay, CRM Proc. Lecture Notes, 47, Amer. Math. Soc., Providence, RI, 2009,  53--80. 

\bibitem[Con]{Conrad} Conrad, Brian: Gross-Zagier revisited.
With an appendix by W. R. Mann. Math. Sci. Res. Inst. Publ., 49, Heegner points and Rankin $L$-series, 67--163, Cambridge Univ. Press, Cambridge, 2004.

\bibitem[Dor1]{DormanOrders} Dorman, D. R.: Global orders in definite quaternion algebras as endomorphism rings for reduced CM elliptic curves,  Th\'eorie des nombres (Quebec, PQ, 1987) eds. J.-M. de Koninck, C. Levesque, and W. de Gruyter, de Gruyter, Berlin, 1989, pp. 108–-116.

\bibitem[Dor2]{Dorman1} Dorman, D. R.:  Special values of the elliptic modular function and factorization formulae. Journal f\"ur die reine und angewandte Mathematik (Crelles Journal). Volume 1988, Issue 383, 207–-220.

\bibitem[Dor3]{Dorman2} Dorman, D. R.: Singular moduli, modular polynomials, and the index of the closure of $\ZZ[j(\tau)]$ in $\QQ(j(\tau))$. Mathematische Annalen, Volume 283, Number 2, (1989) 177--191.

\bibitem[Gor]{Goren} Goren, E. Z.: Lectures on Hilbert Modular Varieties and Modular Forms, CRM Monograph Series, Volume 14.

\bibitem[GL1]{GL1} Goren, E. Z.; Lauter, K. E.: Class invariants of quartic CM fields,
Annales de l'Institut Fourier, Vol. 57 no. 2 (2007), 457--480.
http://www.arxiv.org/pdf/math.NT/0404378.

\bibitem[GL2]{GL2} Goren, E. Z.; Lauter, Kristin E.: Evil primes and superspecial moduli.
International Mathematics Research Notices, volume 2006, Article ID 53864, pages 1–-19.

\bibitem[GL3]{GorenLauterDistance} Goren, Eyal Z.; Lauter, Kristin E.:
The distance between superspecial abelian varieties with real multiplication.
J. Number Theory, Vol. 129, No. 6, 1562--1578 (2009).

\bibitem[GL4]{GLdenominators} Goren, E. Z. ; Lauter, K. E.: Genus 2 Curves with Complex Multiplication. International Mathematics Research Notices (2011), 75 pp. 
doi: 10.1093/imrn/rnr052;
%arXiv:1003.4759 

\bibitem[Gro]{Gross}  Gross, Benedict H.: On canonical and quasicanonical liftings. Invent. Math. 84 (1986), no. 2, 321--326.

%\bibitem[GZ]{GZ} Gross, Benedict H.; Zagier, Don B.: On singular moduli. J. Reine Angew. Math. 355 (1985), 191--220.

\bibitem[GZ]{Gross Zagier} Gross, B. H.; Zagier, D. B.:
On singular moduli. J. Reine Angew. Math. 355 (1985), 191--220.

\bibitem[Lan]{Lang} Lang, S.: Algebraic Number Theory, GTM {\bf 110}, Springer-Verlag, 1986.

\bibitem[Nic1]{Nicole} Nicole, M-H.: Superspecial abelian varieties, theta series and
the Jacquet-Langlands correspondence. Doctoral Thesis, McGill University, June 2005.

\bibitem[Nic2]{NicoleJNT}Nicole, M.-H., Superspecial Abelian Varieties and the Eichler Basis Problem for Hilbert Modular Forms, Journal of Number Theory, 128, no. 11, 2008, 2874--2889. 

\bibitem[Vig]{Vigneras} Vign\'eras, M.-F.:
Arithm\'etique des alg\`ebres de quaternions. Lecture Notes in
Mathematics, 800. Springer, Berlin, 1980.

 \bibitem[WM]{WM} Waterhouse, W. C.; Milne, J. S.: Abelian varieties over finite fields. 1969 Number Theory Institute (Proc. Sympos. Pure Math., Vol. XX, State Univ. New York, Stony Brook, N.Y., 1969), pp. 53--64.
\bibitem[Yu]{Yu}  Yu, Chia-Fu: The isomorphism classes of abelian varieties of CM-type. J. Pure Appl. Algebra 187 (2004), no. 1-3, 305--319.

\end{thebibliography}
\end{document}